\theoremstyle{plain}
\newtheorem{theorem}{Theorem}[section]
\newtheorem{lemma}[theorem]{Lemma}
\newtheorem{definition}[theorem]{Definition}
\newtheorem{corollary}[theorem]{Corollary}
\newtheorem{proposition}[theorem]{Proposition}
\newtheorem{fact}[theorem]{Fact}
\begin{document}

\title{On Keisler Singular-Like Models II}

\author{ Shahram Mohsenipour}
\address{\ Shahram Mohsenipour,
         School of Mathematics, Institute for Research in Fundamental Sciences (IPM)\\
         P. O. Box 19395-5746, Tehran, Iran}
\email{mohseni@ipm.ir}
%\urladdr{http://www.ipm.ac.ir/IPM/people}
\thanks{This research was in part supported by a grant from IPM (No. 94030403)}

%\subjclass[2000]{03C62, 03C55}
%\keywords{cardinal-like models, elementary end extension chains, transfer theorems}
%\date{\today}
\begin{abstract}
Keisler in \cite{keislerordering} proved that if $\theta$ is a
strong limit cardinal and $\lambda$ is a singular cardinal, then the
transfer relation $\theta\longrightarrow\lambda$ holds. In a previous paper \cite{shahramsingular},
we studied initial elementary submodels of the $\lambda$-like models produced in the proof of Keisler's
transfer theorem when $\theta$ is further assumed to be regular i.e., $\theta$ is strongly inaccessible.
In this paper we deal with a much more difficult situation. Some years ago Ali Enayat asked the author whether Keisler's singular-like models
can have elementary end extensions. We give a positive answer to this question.
\end{abstract}

\maketitle
\bibliographystyle{amsplain}

\section{Introduction}
Suppose $\mathcal{L}=\{<,\ldots\}$ is any countable first order language in which $<$ is interpreted as a linear order. Let $T$ be any complete first order theory in the language $\mathcal{L}$ such that $T$ has a $\theta$-like model $M$, where $\theta$ is a strongly inaccessible cardinal. In this paper we continue our previous investigations of model theory of $T$ by showing that for any singular cardinal $\lambda$, there is a class of Keisler $\lambda$-like models of $T$ such that each model in the class has arbitrary large elementary end extensions. We fix $\mathcal{L}$, $T$, $\theta$, $\lambda$ and $M$ as above. Now to state the result more precisely, we add new function symbols to $\mathcal{L}$ as Skolem functions and show the resulting language by $\mathcal{L}^{S}$. Also let $T_{\mathrm{skolem}}$ be the usual Skolem theory asserting that ``there are Skolem functions". Suppose $\mathcal{L}^{S}(C)=\mathcal{L}^{S}\cup C$, where $C=\{c_{ij}|i<\eta,j <\mu_{i}\}$ in which $\eta=cf(\lambda)$ and $\langle\mu_{i};i<\eta\rangle$ is an increasing sequence of cardinals with $\lim_{i<\eta} \mu_{i}=\lambda$. Keisler in \cite{keislerordering} introduced an $\mathcal{L}^{S}(C)$-theory $\Sigma\supset T_{\mathrm{skolem}}$ such that

\begin{theorem}[Keisler\cite{keislerordering}]\label{keislermain}

$\mathrm{(i)}$ $T+\Sigma$ is consistent,

$\mathrm{(ii)}$ for any model $N\models\Sigma$, the elementary submodel $N^{*}\prec N$ generated by $C$ under the Skolem functions is $\lambda$-like.
\end{theorem}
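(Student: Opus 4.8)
The plan is to construct the theory $\Sigma$ explicitly and then verify the two parts separately. The theory $\Sigma$ should be designed so that the constants $\{c_{ij}\}$ are arranged to witness $\lambda$-likeness: the key idea is that each ``block'' of constants $\{c_{ij} : j < \mu_i\}$ should form an increasing sequence under $<$ that is cofinal in an initial segment, and these blocks should be stacked so that the $i$-th block sits below the $(i+1)$-st. Concretely, I would let $\Sigma$ contain, for each $i < \eta$ and each $j < j' < \mu_i$, the sentence $c_{ij} < c_{ij'}$, and for each $i < i' < \eta$ and all relevant $j, j'$, the sentence $c_{ij} < c_{i'j'}$. The crucial part of $\Sigma$ is a collection of sentences asserting that the elements below any particular $c_{ij}$ are ``few'' — specifically, Keisler's device encodes a bound saying that the initial segment determined by $c_{ij}$ has cardinality at most $\mu_i$ (or is in definable bijection with a set of that size), so that the whole order, being the union of $\eta$ such segments with $\sup_i \mu_i = \lambda$, has size exactly $\lambda$ while every proper initial segment is smaller.

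For part (i), consistency, the plan is to use the $\theta$-like model $M$ together with the compactness theorem. Since $\Sigma$ together with any finite fragment of the constant-diagram involves only finitely many constants, I would interpret those finitely many constants inside $M$ by choosing appropriate elements: because $\theta$ is strongly inaccessible, the model $M$ has enough room (each proper initial segment has size $<\theta$, and $\theta$ is regular and a strong limit) to realize the required cardinality-bounding sentences. More precisely, for a finite subset of $\Sigma$, only finitely many indices $i$ and finitely many constants appear, so I can select elements $a_{ij} \in M$ satisfying all the required order and bounding conditions, using the inaccessibility of $\theta$ to guarantee that initial segments of the prescribed sizes exist. Skolemizing $M$ first ensures $M \models T_{\mathrm{skolem}}$, so every finite subset of $T + \Sigma$ is satisfiable in $M$, and compactness yields consistency.

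For part (ii), the plan is to analyze the Skolem hull $N^* \prec N$ generated by $C$. Every element of $N^*$ has the form $t^N(\bar c)$ for some Skolem term $t$ and some finite tuple $\bar c$ from $C$. The first task is to show $|N^*| \le \lambda$: since there are only countably many Skolem terms and $|C| = \sum_{i<\eta}\mu_i = \lambda$, the number of such terms applied to tuples is at most $\lambda \cdot \aleph_0 = \lambda$. The harder task is to show that every proper initial segment of $N^*$ under $<$ has cardinality strictly less than $\lambda$, which is where the cardinality-bounding sentences of $\Sigma$ do their work. Given any element $b = t^N(\bar c) \in N^*$, the tuple $\bar c$ involves only finitely many blocks, hence lies below some $c_{i_0 j_0}$; the bounding sentences then force the initial segment $\{x \in N^* : x < b\}$ to inject into a set of size $\mu_{i_0} < \lambda$. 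Summing cardinalities and using $\mathrm{cf}(\lambda) = \eta$ shows $|N^*| = \lambda$ exactly, while each proper initial segment stays bounded by some $\mu_i < \lambda$.

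I expect the main obstacle to lie in part (ii), specifically in formalizing the cardinality-bounding sentences within first-order logic so that they genuinely force each initial segment of the Skolem hull to have size at most $\mu_i$, rather than merely in the ambient model $N$. The delicate point is that $N^*$ is a Skolem hull, so its initial segments are cut out by Skolem terms over $C$, and I must verify that the first-order bounding scheme — expressed using definable surjections or a definable wellordering encoded in $\mathcal{L}^S$ — transfers correctly from $N$ to the generated submodel $N^*$. Establishing this faithful transfer, and checking that the blocks $\{c_{ij}\}$ are genuinely cofinal so that no proper initial segment accidentally attains full size $\lambda$, is the technical heart of the argument.
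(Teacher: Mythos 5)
Your skeleton (compactness by interpreting finitely many constants in $M$, then a term-counting analysis of the Skolem hull) agrees with Keisler's, but both load-bearing components of the proposal are missing or wrong. First, the ``cardinality-bounding sentences'' you place at the heart of $\Sigma$ do not exist: there is no first-order sentence asserting that the initial segment below $c_{ij}$ has cardinality at most $\mu_{i}$, and no definable bijection onto a set of external cardinality $\mu_{i}$ is available, since $\mu_{i}$ is not nameable in the language. The actual theory $\Sigma$ (reproduced in Section 4 of this paper) controls initial segments of $N^{*}$ indirectly: axiom (iii) puts every Skolem term in constants of levels $<i$ below $c_{ij}$, which makes $C$ cofinal in $N^{*}$; and axiom (iv) is an indiscernibility scheme saying that if $\tau(c_{i_{1}j_{1}},\dots,c_{i_{n}j_{n}})<c_{uv}$ with $u<i_{n}$, then the value of $\tau$ is unchanged when the second indices of all constants of level $>u$ are altered arbitrarily. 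Hence an element of $N^{*}$ below $c_{uv}$ is determined by a term (countably many), the constants of level $\leq u$ it uses (at most $\sum_{i\leq u}\mu_{i}<\lambda$ choices), and the finitely many levels of its remaining constants (at most $\eta$ choices each), so every proper initial segment of $N^{*}$ has size $<\lambda$. This is how $\lambda$-likeness is extracted from purely first-order axioms; the obstacle you flag in your last paragraph about ``formalizing the cardinality-bounding sentences within first-order logic'' is exactly the point at which your version of $\Sigma$ cannot be completed.

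Second, your consistency argument addresses only the easy axioms. Choosing elements of $M$ realizing the order conditions (ii) and (iii) is routine; the real difficulty is satisfying finitely many instances of the indiscernibility scheme (iv), i.e.\ finding a finite matrix of elements of $M$ on which the relevant Skolem terms collapse in the required way. ``Enough room because $\theta$ is inaccessible'' does not produce such indiscernibles: one needs the partition calculus. This is precisely what Keisler's Large Sets and the Erd\"{o}s--Rado polarized partition relation (Theorem \ref{erdosradopp}) are for, and it is the analogue of the work done in this paper by Propositions \ref{proposition1}, \ref{proposition2} and \ref{proposition3} for superlarge sets. A minor additional point: for Theorem \ref{keislermain} itself Keisler needs only that $\theta$ is a strong limit; the regularity of $\theta$ that you invoke is irrelevant here and is required only for the new results of this paper.
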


Now we call the singular-like model $N^{*}$ in the above theorem a {\em Keisler model} of $T$. In order to prove the much harder part (i) of Theorem \ref{keislermain}, namely the consistency of $T+\Sigma$, Keisler defined his {\em Large Sets} which are special ``large" sets whose members are finite matrices with elements coming from the initial model $M$ and then by using Erd\"{o}s-Rado's polarized partition theorem he proved some combinatorial properties of the large sets. Let $\Sigma^{'}$ be a finite part of $\Sigma$, then it was shown that there is a large set whose every element can interpret the finitely many $c_{ij}$'s appearing in $\Sigma^{'}$ in such a way that $\Sigma^{'}$ holds in $M$. Therefore $T+\Sigma$ is consistent.

Now we are in the position to state our result. We shall introduce an $\mathcal{L}^{S}(C)$-theory $\Sigma_{1}\supset T_{\mathrm{skolem}}$ such that

\begin{theorem}\label{maintheorem}

$\mathrm{(i)}$ $T+\Sigma+\Sigma_{1}$ is consistent,

$\mathrm{(ii)}$ for any model $N\models\Sigma+\Sigma_{1}$, the elementary submodel $N^{*}\prec N$ generated by $C$ under the Skolem functions is $\lambda$-like and has elementary end extensions of any cardinality $\geq\lambda$.
\end{theorem}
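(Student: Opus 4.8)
The plan is to design $\Sigma_{1}$ so that it encodes, inside the first-order theory of the constants $C$, enough homogeneity to force a \emph{definable unbounded end-extension type} over the Skolem hull, and then to push the consistency through Keisler's Large Set machinery with a strengthened partition input. Concretely, I would let $\Sigma_{1}$ assert an end-homogeneity (``remarkability'') condition on the order-indiscernible behaviour of the families $\langle c_{ij} : j<\mu_{i}\rangle$: for every Skolem term $t$ and every tuple of constants the truth value of the order-formulas comparing $t(\bar c)$ with a further constant $c'$ should depend only on the relative position of the indices, and in particular the value of $t$ applied to constants from high blocks should be either below a fixed earlier constant or above every constant of its own block. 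This is exactly the syntactic shadow of the condition guaranteeing that new indiscernibles placed cofinally above $C$ generate an end extension rather than interleaving with the old elements.

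For part (i) I would follow Keisler's compactness scheme but sharpen its combinatorial core. Given a finite $\Sigma'\subseteq\Sigma+\Sigma_{1}$ involving finitely many $c_{ij}$, it suffices to produce a Large Set all of whose finite matrices interpret these constants so that $\Sigma'$ holds in $M$. Membership in the $\Sigma$-part is handled as in Theorem \ref{keislermain}; the new clauses of $\Sigma_{1}$ demand that the interpreting tuples be not merely order-indiscernible but \emph{end-homogeneous} for the relevant Skolem terms. First I would reformulate this demand as a polarized/ordinary partition requirement on $\theta$, and then invoke Erd\H{o}s--Rado together with the strong inaccessibility of $\theta$ (strong limit for the partition calculus, regularity for the $\lambda$-like bookkeeping) to thin a given Keisler-large set to one that is simultaneously large and end-homogeneous. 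Compactness then yields consistency of $T+\Sigma+\Sigma_{1}$.

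For part (ii), $\lambda$-likeness of $N^{*}$ is immediate from Theorem \ref{keislermain}(ii) since $N\models\Sigma$. To produce end extensions I would exploit that, under $\Sigma+\Sigma_{1}$, the constants $C$ behave like an end-homogeneous sequence of order-indiscernibles, so that the associated Ehrenfeucht--Mostowski blueprint admits new indiscernibles placed cofinally above $C$. Equivalently, $\Sigma_{1}$ yields a complete type $p(x)$ over $N^{*}$ that is unbounded (contains $x>a$ for every $a\in N^{*}$) and has the end-extension property: for every Skolem term $t$ and parameters $\bar a$ from $N^{*}$, whenever $t(x,\bar a)$ stays below a fixed element of $N^{*}$ along $p$ it is in fact equal to a fixed element of $N^{*}$ along $p$. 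The end-homogeneity installed by $\Sigma_{1}$ is precisely what makes such a $p$ both complete and \emph{definable}, with a definition scheme inherited by every elementary extension. Adding $\kappa$ new indiscernibles above $C$ (equivalently, iterating the realization of $p$ along an elementary chain of length $\kappa$) and closing under the Skolem functions then produces an elementary end extension of $N^{*}$ of cardinality $\lambda+\kappa=\kappa$, for any prescribed $\kappa\geq\lambda$.

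I expect the decisive obstacle to be part (i): verifying that Keisler's Large Sets survive the extra end-homogeneity requirement, i.e.\ that the strengthened partition relation actually holds at $\theta$ and is compatible with the finite-matrix structure of the large sets without destroying largeness. Once this is secured, everything in part (ii) should follow more or less formally from having a definable unbounded end type available in every model of $\Sigma+\Sigma_{1}$. The genuine work, and the reason this situation is ``much more difficult'' than the regular-$\theta$ analysis of the previous paper, lies in the refined combinatorics certifying that end-homogeneity can be imposed simultaneously with Keisler-largeness.
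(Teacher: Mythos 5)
Your sketch of part (ii) is essentially the paper's argument: the paper adds new constants $D=\{d_i\colon i<\kappa\}$ above $C$, writes down a theory $\Pi$ saying that every Skolem term in old constants and new $d$'s either equals its value at a sufficiently high block of $c$'s (when that value already lies below some $c_{ij}$) or lies above all of $N^{*}$, proves $\Pi+Th(N^{*})$ consistent by interpreting the finitely many $d$'s as constants $c_{(i_n+1)j}$ from one higher block and invoking $\Sigma_{1}$(iv), and takes the Skolem hull of $C\cup D$. That is exactly your ``unbounded definable end-extension type realized $\kappa$ times,'' and your description of $\Sigma_{1}$(iv) as an end-homogeneity condition on the blocks is accurate. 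So the syntactic half of your plan is sound and matches the paper.

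The genuine gap is in part (i), and you have in fact located it yourself but left it unresolved: you propose to ``thin a given Keisler-large set to one that is simultaneously large and end-homogeneous'' by Erd\H{o}s--Rado, staying inside Keisler's Large Set framework. The paper states explicitly that this cannot be made to work, and the reason is structural. An axiom of type $\Sigma_{1}$(iv) asserts that the value of $\tau$ at block $i_n$ agrees with its value at \emph{every} higher block $u\geq i_n$; to homogenize for this, the number of colours you must control at the higher blocks is roughly $2^{|M_{*}|}$ where $M_{*}$ already contains all the lower blocks of the interpreting matrix, so the required sizes of the blocks are not fixed in advance but must be rebalanced each time a new sentence is processed, and the rebalancing depends on which block index $\iota(\sigma)=e'$ the sentence lives at. Keisler's largeness is a single uniform condition and does not survive this: after handling a sentence with $\iota(\sigma)=e'$ the surviving set is only ``large from level $e'$ on.'' The paper's solution is a new notion, \emph{$e$-superlarge sets}, defined by a two-player game $G(S,e)$ in which player I demands cardinalities and player II supplies levels $\beta_i$ and homogeneous sets, with the index $e$ recording how many initial blocks have been frozen; the two combinatorial Propositions \ref{proposition1} and \ref{proposition3} then show that processing a $\Sigma_{1}^{*}$(iv) or $\Sigma^{*}$(iv) sentence converts an $e$-superlarge set into an $e'$-superlarge subset on which the sentence holds. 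This varying-index game-theoretic notion, together with the fact that the $\Sigma$-axioms themselves must be re-verified in the new framework (your plan defers them to Keisler's original argument, which no longer applies once the notion of largeness has changed), is the missing idea; without it your part (i) does not go through.
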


Now we see that any $N^{*}\models T$ generated by Theorem \ref{maintheorem} is a Keisler singular-like model with arbitrary large elementary end extensions. Thus Enayat's question is answered positively. To prove Theorem \ref{maintheorem} we follow the same strategy but it seems our theorem can not be resolved in the framework of Keisler's Large Sets, so we are forced to work with more general sets which we call {\em Superlarge Sets} in order to locally interpret the axioms of $\Sigma_{1}$ in $M$. In fact the main technical parts of this work are the proofs of two combinatorial properties of superlarge sets in Propositions \ref{proposition1} and  \ref{proposition3}. At the end of the paper by giving an example we show that in Theorem \ref{maintheorem}, the strong inaccessibility of $\theta$ is necessary.

\section{Proof of the second part of Theorem \ref{maintheorem}}

We begin this section by reviewing some partition theorems of Erd\"{o}s and Rado for infinite cardinals which as in the case of Keisler's large sets will be used to demonstrate some combinatorial properties of superlarge sets.  Let $\kappa$ be a cardinal, we denote by $[X]^{\kappa}$ the set of all subsets of $X$ of cardinality $\kappa$. Note that if $X$ is a linearly ordered set and $r$ is a positive integer, we identify $[X]^{r}$ by the set of all increasing sequences of length $r$ coming from $X$.

\begin{theorem}[Erd\"{o}s and Rado] \label{erdosradop}
For any infinite cardinal $\kappa$ and any $r<\omega$
\[
\beth_{r}(\kappa)^{+}\longrightarrow(\kappa^{+})^{r+1}_{\kappa}.
\]
\end{theorem}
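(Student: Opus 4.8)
The plan is to prove the relation by induction on the exponent parameter $r$, so I fix the infinite cardinal $\kappa$ throughout. The base case $r=0$ asserts $\beth_{0}(\kappa)^{+}=\kappa^{+}\longrightarrow(\kappa^{+})^{1}_{\kappa}$, which is pure pigeonhole: given a coloring $c\colon[\,\kappa^{+}\,]^{1}\to\kappa$ of the points into $\kappa$ colors, the $\kappa^{+}$ points are distributed among $\kappa$ classes, and since $\kappa^{+}$ is regular with $\kappa<\kappa^{+}$, some color class has cardinality $\kappa^{+}$ and is the desired homogeneous set. The real work is the inductive step, passing from $r$ to $r+1$.

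For the inductive step, set $\nu=\beth_{r-1}(\kappa)$, so $\beth_{r}(\kappa)=2^{\nu}=:\mu$, and let $c\colon[\,\mu^{+}\,]^{r+1}\to\kappa$ be given on a ground set of size $\mu^{+}$. The key intermediate notion is an \emph{end-homogeneous} set $E$: a set on which the color $c(A\cup\{x\})$ of an $(r+1)$-element set depends only on its first $r$ elements $A$, not on the top element $x$. I would construct such an $E$ of size $\nu^{+}=\beth_{r-1}(\kappa)^{+}$ by transfinite recursion. At each stage I pick a new point and then refine the remaining candidates by the \emph{type} a candidate $y$ realizes over the finitely-many-at-a-time but altogether $\le\nu$ previously chosen points, namely the tuple of colors $c(A\cup\{y\})$ as $A$ ranges over the $r$-subsets of the chosen set. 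The number of such types is at most $\kappa^{\nu}$, and here the arithmetic is decisive: since $r\ge 1$ gives $\nu=\beth_{r-1}(\kappa)\ge\kappa$, we have $\kappa\le 2^{\nu}=\mu$ and hence $\kappa^{\nu}\le(2^{\nu})^{\nu}=2^{\nu}=\mu$. Thus each refinement splits a candidate set into at most $\mu$ type-classes. Once $E$ with $|E|=\nu^{+}$ is obtained, end-homogeneity yields an induced coloring $c'\colon[E]^{r}\to\kappa$ defined by $c'(A)=c(A\cup\{x\})$ for any $x\in E$ above $A$; applying the induction hypothesis $\beth_{r-1}(\kappa)^{+}\longrightarrow(\kappa^{+})^{r}_{\kappa}$ to $c'$ on $E$ produces $H\subseteq E$ of size $\kappa^{+}$ homogeneous for $c'$, and this same $H$ is then homogeneous for $c$, completing the step.

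The step I expect to be the main obstacle is the transfinite recursion building the end-homogeneous set, specifically the preservation of cardinality through limit stages. At a successor stage the surviving candidate set has size $\mu^{+}$ and splits into at most $\mu$ type-classes, so by regularity of the successor cardinal $\mu^{+}$ (with $\mu<\mu^{+}$) one class again has size $\mu^{+}$; this part is routine. The delicate point is a limit stage $\lambda<\nu^{+}$, where the naive intersection $\bigcap_{\alpha<\lambda}X_{\alpha}$ of the previously kept classes can collapse to a small set even though each $X_{\alpha}$ was large, so greedily keeping an arbitrary large class at each successor is not enough. I would resolve this by a branch-counting argument: the candidates surviving to stage $\lambda$ are classified by their type over the $\le\nu$ points chosen so far, a type-space of size at most $\mu^{\nu}=(2^{\nu})^{\nu}=2^{\nu}=\mu$, so the $\mu^{+}$ points of the ground set fall into at most $\mu$ such classes and, again by regularity of $\mu^{+}$, one coherent class has size $\mu^{+}$; arranging the successor choices to track such a class keeps the recursion alive and of full size for all $\nu^{+}$ steps. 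Verifying that these choices cohere so that limits remain of size $\mu^{+}$ is the technical heart of the argument, and everything else is bookkeeping.
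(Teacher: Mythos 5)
The paper states this result as a known theorem of Erd\H{o}s and Rado and gives no proof of it, so there is nothing internal to compare your argument against; I am judging it on its own terms. Your overall architecture is the standard one and is sound: induction on $r$, pigeonhole for the base case, an end-homogeneous set $E$ of size $\beth_{r-1}(\kappa)^{+}$ for the step, the induced coloring $c'$ on $[E]^{r}$, and the observation that a $c'$-homogeneous $H$ is $c$-homogeneous. The cardinal arithmetic ($\nu\geq\kappa$, hence $\kappa^{\nu}\leq 2^{\nu}=\mu$ type-classes per refinement) is also right.

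The genuine gap is exactly where you place it, and your proposed repair does not close it. At a limit stage $\lambda$ you classify all $\mu^{+}$ points of the ground set by their type over $\{x_{\gamma}:\gamma<\lambda\}$ and extract a class $Y$ of size $\mu^{+}$ by regularity of $\mu^{+}$. But pigeonhole only gives you \emph{some} large class; it gives no reason that $Y$'s type is compatible with the classes $X_{\alpha+1}$ from which the earlier points were actually drawn. Concretely, for $r=1$: if $X_{1}$ was chosen so that $c(\{x_{0},y\})=0$ for all $y\in X_{1}$, and all of $x_{1},x_{2},\dots$ were taken from $X_{1}$, the large class $Y$ at stage $\lambda$ may satisfy $c(\{x_{0},y\})=5$ for all $y\in Y$; continuing from $Y$ then destroys end-homogeneity at the pair $(x_{0},x_{\lambda})$ versus $(x_{0},x_{1})$. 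Saying you will ``arrange the successor choices to track such a class'' presupposes knowing at successor stages which branch will still be large at all future limits, which is precisely the difficulty, not a bookkeeping matter. The standard ways to close this are: (a) the ramification tree --- keep \emph{every} type-class as a child at every stage rather than greedily choosing one, observe that the tree of height $\nu^{+}$ has at most $\mu^{\nu}\cdot\nu^{+}=\mu$ nodes and hence at most $\mu$ chosen points, so some $y$ in the ground set of size $\mu^{+}$ is never chosen and survives along a full branch, whose chosen points form the end-homogeneous $E$; or (b) fix a ``guide'': first build a set $A$ of size $\mu$ closed under realizing types over its $\leq\nu$-sized subsets, pick $u\notin A$, and always choose $x_{\alpha}\in A$ realizing $u$'s type over the previously chosen points, so that coherence is automatic because every choice is anchored to the single point $u$. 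Either device replaces your limit-stage pigeonhole; without one of them the recursion as you describe it can die (or lose end-homogeneity) at the first limit ordinal.
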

We also recall Erd\"{o}s and Rado's {\em polarized} partition relation. Let $r$, $s$ be positive integers and $\mu$, $\kappa_{i},\lambda_{i}$ for $1\leq i\leq s$ be cardinals (finite or infinite). The expression
\[
(\kappa_{1},\dots,\kappa_{s})\longrightarrow(\lambda_{1},\dots,\lambda_{s})^{r}_{\mu}
\]
means that for any partition of the set
\[
[\kappa_{1}]^{r}\times\dots\times[\kappa_{s}]^{r}
\]
into $\mu$ parts, there exist sets
\[
X_{1}\in[\kappa_{1}]^{\lambda_{1}},\dots,X_{s}\in[\kappa_{s}]^{\lambda_{s}}
\]
such that the set
\[
[X_{1}]^{r}\times\dots\times[X_{s}]^{r}
\]
lies entirely within one part of the definition.
\begin{theorem}[Erd\"{o}s and Rado]\label{erdosradopp}
Suppose $\kappa_{i},\lambda_{i}$ are infinite cardinals for $1\leq i\leq s+t$ such that
\[
(\kappa_{1},\dots,\kappa_{s})\longrightarrow(\lambda_{1},\dots,\lambda_{s})^{r}_{\mu}
\]
and
\[
(\kappa_{s+1},\dots,\kappa_{s+t})\longrightarrow(\lambda_{s+1},\dots,\lambda_{t})^{r}_{\mu^{'}}
\]
where $\mu^{'}\geq\mu^{\kappa_{1}.\dots.\kappa_{s}}$. Then
\[
(\kappa_{1},\dots,\kappa_{s+t})\longrightarrow(\lambda_{1},\dots,\lambda_{s+t})^{r}_{\mu}.
\]
\end{theorem}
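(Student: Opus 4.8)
The plan is to derive the single conclusion by a two-stage argument that invokes the two hypotheses in turn, homogenizing first the ``tail'' coordinates $\kappa_{s+1},\dots,\kappa_{s+t}$ and then the ``head'' coordinates $\kappa_{1},\dots,\kappa_{s}$. Suppose we are given a partition $f\colon[\kappa_{1}]^{r}\times\cdots\times[\kappa_{s+t}]^{r}\longrightarrow\mu$ into $\mu$ parts. Writing $\bar{a}=(a_{1},\dots,a_{s})$ for a tuple ranging over the head product $[\kappa_{1}]^{r}\times\cdots\times[\kappa_{s}]^{r}$ and $\bar{b}=(b_{s+1},\dots,b_{s+t})$ for one ranging over the tail product $[\kappa_{s+1}]^{r}\times\cdots\times[\kappa_{s+t}]^{r}$, I would assign to each $\bar{b}$ its \emph{fibre function} $g(\bar{b})\colon[\kappa_{1}]^{r}\times\cdots\times[\kappa_{s}]^{r}\to\mu$ given by $g(\bar{b})(\bar{a})=f(\bar{a},\bar{b})$. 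This $g$ is itself a partition of the tail product, whose ``colours'' are now functions from the head product into $\mu$.

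The key cardinal computation is that $g$ uses at most $\mu'$ colours. Since each $\kappa_{i}$ is infinite we have $|[\kappa_{i}]^{r}|=\kappa_{i}$, so the head product has cardinality $\kappa_{1}\cdots\kappa_{s}$, and hence the number of possible fibre functions is at most $\mu^{\kappa_{1}\cdots\kappa_{s}}\leq\mu'$, which is exactly the hypothesis imposed on $\mu'$. Applying the second polarized relation (the one with $\mu'$ colours) to the partition $g$ therefore yields sets $X_{s+1}\in[\kappa_{s+1}]^{\lambda_{s+1}},\dots,X_{s+t}\in[\kappa_{s+t}]^{\lambda_{s+t}}$ on which $g$ is constant. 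Constancy of $g$ means precisely that there is a single function $h\colon[\kappa_{1}]^{r}\times\cdots\times[\kappa_{s}]^{r}\to\mu$ such that $f(\bar{a},\bar{b})=h(\bar{a})$ for every $\bar{a}$ in the head product and every $\bar{b}\in[X_{s+1}]^{r}\times\cdots\times[X_{s+t}]^{r}$.

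It then remains to apply the first polarized relation, now with only $\mu$ colours, to the partition $h$ of the head product: this produces $X_{1}\in[\kappa_{1}]^{\lambda_{1}},\dots,X_{s}\in[\kappa_{s}]^{\lambda_{s}}$ on which $h$ takes a single constant value $c<\mu$. Combining the two stages, for every $\bar{a}\in[X_{1}]^{r}\times\cdots\times[X_{s}]^{r}$ and every $\bar{b}\in[X_{s+1}]^{r}\times\cdots\times[X_{s+t}]^{r}$ we obtain $f(\bar{a},\bar{b})=h(\bar{a})=c$, so $f$ is constant on $[X_{1}]^{r}\times\cdots\times[X_{s+t}]^{r}$, which is exactly the desired relation $(\kappa_{1},\dots,\kappa_{s+t})\longrightarrow(\lambda_{1},\dots,\lambda_{s+t})^{r}_{\mu}$. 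I do not expect a serious obstacle: the whole content is the reduction of a joint partition to its fibres together with the observation that the number of fibre functions is controlled by $\mu^{\kappa_{1}\cdots\kappa_{s}}$, which is precisely why the asymmetric bound $\mu'\geq\mu^{\kappa_{1}\cdots\kappa_{s}}$ appears in the hypothesis. The only point demanding a little care is verifying that homogeneity of $g$ genuinely collapses all the fibres to one common function $h$, after which the second application is immediate.
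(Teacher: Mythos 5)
Your argument is correct. Note that the paper itself gives no proof of this theorem: it is quoted as a classical result of Erd\H{o}s and Rado, so there is no in-paper argument to compare against. What you have written is the standard proof: encode each tail tuple $\bar{b}$ by its fibre function $\bar{a}\mapsto f(\bar{a},\bar{b})$, observe that since $|[\kappa_{i}]^{r}|=\kappa_{i}$ for infinite $\kappa_{i}$ the number of such fibre functions is at most $\mu^{\kappa_{1}\cdots\kappa_{s}}\leq\mu'$, homogenize the tail with the second relation to collapse all fibres to a single $h$, and then homogenize the head with the first relation applied to $h$. The one step worth making explicit is that a partition into at most $\mu'$ classes is treated as a partition into exactly $\mu'$ classes by admitting empty parts, which is the usual monotonicity of the partition relation (the paper invokes the same convention later); with that remark the proof is complete.
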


The following corollary of Erd\"{o}s-Rado's polarized partition theorem will be very useful.

\begin{corollary}\label{usefulcorollary}
Suppose that for $1\leq i\leq s$, $\kappa_{i},\lambda_{i}$ are infinite cardinals and
\[
\kappa_{i}>\beth_{r-1}(\lambda_{i}),\,\,\,\,\,\lambda_{i+1}\geq 2^{\kappa_{i}}.
\]
Then
\[
(\kappa_{1},\dots,\kappa_{s})\longrightarrow(\lambda_{1}^{+},\dots,\lambda_{s}^{+})^{r}_{\lambda_{1}}.
\]
\end{corollary}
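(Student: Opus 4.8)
The plan is to manufacture the one-coordinate instances of the relation straight from Theorem~\ref{erdosradop} and then glue them together one coordinate at a time via the concatenation result Theorem~\ref{erdosradopp}, with essentially all of the content living in the verification of the side condition $\mu'\geq\mu^{\kappa_1\cdots\kappa_s}$. For the base case $s=1$, I would apply Theorem~\ref{erdosradop} with $r-1$ in place of $r$ and $\kappa=\lambda_i$, obtaining $\beth_{r-1}(\lambda_i)^{+}\longrightarrow(\lambda_i^{+})^{r}_{\lambda_i}$ for each $i$. Since the hypothesis $\kappa_i>\beth_{r-1}(\lambda_i)$ means $\kappa_i\geq\beth_{r-1}(\lambda_i)^{+}$, monotonicity of the arrow in its first entry yields the single-coordinate relation $\kappa_i\longrightarrow(\lambda_i^{+})^{r}_{\lambda_i}$, which is precisely the polarized relation $(\kappa_i)\longrightarrow(\lambda_i^{+})^{r}_{\lambda_i}$ in the case $s=1$. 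This already settles the corollary for $s=1$ and provides the building blocks for the general case.

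Next I would record the elementary but crucial observation that the hypotheses force the $\kappa_i$ to be strictly increasing: for $r\geq 1$ one has $\kappa_{i+1}>\beth_{r-1}(\lambda_{i+1})\geq\lambda_{i+1}\geq 2^{\kappa_i}>\kappa_i$. Consequently every finite product of these infinite cardinals collapses to its largest factor, so that $\kappa_1\cdots\kappa_k=\kappa_k$. It is exactly this collapse that renders the side condition of Theorem~\ref{erdosradopp} tractable.

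The heart of the argument is then an induction on $s$. Assuming $(\kappa_1,\dots,\kappa_{s-1})\longrightarrow(\lambda_1^{+},\dots,\lambda_{s-1}^{+})^{r}_{\lambda_1}$ has been established, I take this as the first block of Theorem~\ref{erdosradopp} with $\mu=\lambda_1$, and I take the single coordinate $\kappa_s$ as the second block with color count $\mu'=\lambda_s$, using the base-case relation $\kappa_s\longrightarrow(\lambda_s^{+})^{r}_{\lambda_s}$. The side condition demands $\mu'\geq\mu^{\kappa_1\cdots\kappa_{s-1}}$, that is $\lambda_s\geq\lambda_1^{\kappa_{s-1}}$, and here the hypotheses pay off: since $\lambda_1\leq\kappa_{s-1}$ we get $\lambda_1^{\kappa_{s-1}}\leq 2^{\kappa_{s-1}}\leq\lambda_s$, the last step being the assumption $\lambda_{i+1}\geq 2^{\kappa_i}$ with $i=s-1$. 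Theorem~\ref{erdosradopp} then produces $(\kappa_1,\dots,\kappa_s)\longrightarrow(\lambda_1^{+},\dots,\lambda_s^{+})^{r}_{\lambda_1}$, completing the inductive step.

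I expect the only genuinely delicate point to be the verification of $\mu'\geq\mu^{\kappa_1\cdots\kappa_s}$ at each stage; everything rests on collapsing the product $\kappa_1\cdots\kappa_{s-1}$ to $\kappa_{s-1}$ and then absorbing the power $\lambda_1^{\kappa_{s-1}}$ into $2^{\kappa_{s-1}}$ so that the hypothesis $\lambda_s\geq 2^{\kappa_{s-1}}$ can be invoked. The remaining ingredients, namely the base Erd\"{o}s--Rado estimate and the bookkeeping of which coordinates constitute the two blocks, are routine.
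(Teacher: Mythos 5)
Your proposal is correct and follows essentially the same route as the paper: obtain the one-coordinate relations $\kappa_i\longrightarrow(\lambda_i^{+})^{r}_{\lambda_i}$ from Theorem~\ref{erdosradop}, then concatenate one coordinate at a time via Theorem~\ref{erdosradopp}, verifying the side condition through $\lambda_{i+1}\geq 2^{\kappa_i}=\kappa_i^{\kappa_i}\geq\lambda_1^{\kappa_1\cdots\kappa_i}$. Your write-up merely makes explicit the cardinal arithmetic (the increasing of the $\kappa_i$ and the collapse of the product) that the paper leaves implicit.
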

\begin{proof}
By Theorem \ref{erdosradop} we have
\[
\kappa_{i}\longrightarrow(\lambda_{i}^{+})^{r}_{\lambda_{i}},\,\,\,\,\, 1\leq i\leq s.
\]
Also
\[
\lambda_{i+1}\geq 2^{\kappa_{i}}=\kappa_{i}^{\kappa_{i}}\geq\lambda_{1}^{\kappa_{1}.\dots.\kappa_{i}}.
\]
The corollary now follows from Theorem \ref{erdosradopp} by induction on $i$.
\end{proof}

Now we fix our notations from the previous section. Suppose $\mathcal{L}=\{<,\dots\}$ is any countable first order language in which $<$ is always interpreted as a linear ordering and $T$ is an $\mathcal{L}$-theory such that $T$ has a $\theta$-like model $M$ where $\theta$ is a strongly inaccessible cardinal. Let $\mathcal{L}^{S}$ be the result of adding Skolem functions to $\mathcal{L}$ and $T_{\mathrm{skolem}}$ be the usual Skolem theory. Obviously $M$ can be expanded to be a model of $T_{\mathrm{skolem}}$. Also let $\mathcal{L}^{S}(C)$ be the language produced by adding a set of doubly indexed constants $C=\{c_{ij}|i<\eta,j <\mu_{i}\}$ to $\mathcal{L}^{S}$ where $\lambda$ is a singular cardinal, $\eta=cf(\lambda)$ and $\langle\mu_{i};i<\eta\rangle$ is an increasing sequence of cardinals with $\lim_{i<\eta} \mu_{i}=\lambda$. Since $\theta$ is strongly inaccessible, by an easy Skolem Hull argument we can write $M$ as the union of an elementary end extension chain of its $\mathcal{L}^{S}$-submodels: $M=\bigcup_{i<\theta} M_{i}$ such that for any limit ordinal $\sigma <\theta$, we have $M_{\sigma}=\bigcup_{i<\sigma} M_{i}$. Now we define a function $F\colon M\longrightarrow\theta$ such that for any $a\in M$, $F(a)$ is the least ordinal $i<\theta$ with $a\in M_{i}$. Obviously $F(x)$ is always a successor ordinal $<\theta$. We frequently use this simple implication of the definition of $F$ that if $\tau(x_{1},\dots, x_{n})\in \mathcal{L}^{S}$ is a term and $\{a_{1},\dots,a_{n}, b\}\subset M$ such that $F(b)>\max(F(a_{1}),\dots,F(a_{n}))$, then $\tau(a_{1},\dots, a_{n})<b$. Suppose $r,s$
are two positive integers. We consider sequences $\textbf{x}$ of length $s$,
each term being a sequence of length $r$. For such sequences we write
\begin{center}
$\textbf{x}=\langle
\textbf{x}_{1},\dots,\textbf{x}_{s}\rangle=\big{\langle}\langle x_{11},\dots,x_{1r}\rangle,\ldots,
\langle x_{s1},\ldots,x_{sr}\rangle\big{\rangle}$.
\end{center}
Sometimes we denote $i$th coordinate $x_{i}$ of any tuple $\textbf{x}=\langle x_{1},\dots,x_{n}\rangle$ by $\textbf{x}(i)$ for $1\leq i\leq n$. We define $[F]^{r,s}$ to be the set of all $s$-tuples $\textbf{x}$ of elements
of $[M]^{r}$(the set of all increasing $r$-sequences of $M$) such that
\begin{center}
$F(x_{ij})=F(x_{il}),$ \,  $i=1,\ldots,s$\,\,\, and \,\,\,
$j,l=1,\dots,r.$
\end{center}
and
\begin{center}
$F(x_{11})<F(x_{21})<\ldots<F(x_{s1}).$
\end{center}
Then
\begin{center}
$[F]^{r,s}=\bigcup\{[F^{-1}(\alpha_{1})]^{r}\times\ldots\times
[F^{-1}(\alpha_{s})]^{r};\alpha_{1}<\ldots<\alpha_{s}<\theta\}.$
\end{center}
Suppose $A\subset M$, we use $[F|A]^{rs}$ to denote the set $\{\mathbf{x}\in [F]^{rs}\,|\,x_{ij}\in A\}$. We use a game theoretical language to introduce superlarge sets. For each positive integer $e\leq s$ and a subset $S\subset[F]^{r,s}$, we consider
a game $G(S,e)$ between two players I and II. In this game each
player has $e$ moves. Put $f=s-e$. Player I moves first, and for his first move
he chooses a cardinal $\mu_{1}<\theta$. Then II chooses an ordinal
$\beta_{1}<\theta$. Then I chooses a cardinal $\mu_{2}<\theta$ and then II
chooses an ordinal $\beta_{2}<\theta$, and so on until the player I chooses a cardinal $\mu_{e}$ for his last move. The player II for his last move will choose a sequence of ordinals $\langle\beta_{e+i}|i<\theta\rangle$ of length $\theta$. We say that the player II {\em wins}
the game $G(S,e)$ if
\begin{center}
$\beta_{1}<\beta_{2}<\dots<\beta_{e}<\dots<\beta_{e+i}<\cdots$ for $i<\theta$
\end{center}
and there exist sets
\begin{center}
$X_{1}\in[F^{-1}(\beta_{1})]^{\mu_{1}},\dots,X_{e}\in[F^{-1}
(\beta_{e})]^{\mu_{e}}$
\end{center}
as well as sets
\begin{center}
$X_{e+i}\subset F^{-1}(\beta_{e+i})$ for $1\leq i<\theta$
\end{center}
such that
\begin{center}
$\sup\bigr\{|X_{e+i}|;i<\theta\bigr\}=\theta$
\end{center}
where $|X|$ denotes the cardinality of $X$ and
\begin{center}
$\displaystyle{\prod_{1\leq i\leq e}}\big{[}X_{i}\big{]}^{r}\times \,\,\,\big{[}F|(\!\!\displaystyle{\bigcup_{1\leq i<\theta}}X_{e+i})\big{]}^{r,f}\subset S.$
\end{center}
Otherwise I wins. Note that if $f=0$, then the right hand set of the above product is empty. Since $e$ is finite, it is clear that exactly one player has a
winning strategy for the game $G(S,e)$.
\begin{definition}
We say that a set $S\subset [F]^{rs}$ is \textsl{e-superlarge} $(1\leq e\leq s)$ if the player II has a winning strategy for the game $G(S,e)$.
\end{definition}
It is trivial that any $e$-superlarge subset of $[F]^{r,s}$ is nonempty.
\begin{definition}
Let $\Sigma_{1}$ be the following $\mathcal{L}^{S}(C)$-theory:
\begin{itemize}
\item[(i)] $T_{\mathrm{skolem}}$ plus the axioms for $<$ to be a linear order.

\item[(ii)] $c_{ij}<c_{kl}$ iff $(i,j)<(k,l)$ in the
lexicographical order.

\item[(iii)] $\tau(c_{i_{1}j_{1}},\ldots,c_{i_{n}j_{n}})<c_{ij}$, where
$\tau(v_{1},\dots,v_{n})$ is a term of $\mathcal{L}^{S}$,
$i_{1},\ldots,i_{n}<i$ and $j,j_{1},\dots,j_{n}$ are arbitrary ordinals.

\item[(iv)] If  $i_n>1$ and $\tau(v_{1},\dots,v_{n})$ is a term of $\mathcal{L}^{S}$ and $\tau(c_{i_{1}j_{1}},\ldots,c_{i_{n}j_{n}})<c_{(i_{n}-1)j}$, then
\begin{center}
$\tau(\overline{c},c_{i_{q+1}j_{q+1}},\ldots,c_{i_{n}j_{n}})=\tau(\overline{c},c_{ul_{1}},\ldots,c_{ul_{n-q}})$,
\end{center}
where $u\geq i_{n}$, $q$ is the greatest integer such that $i_{q}\neq i_{n}$ and $l_{1},\dots,l_{n-q}$ are arbitrary ordinals and $\overline{c}=\langle c_{i_{1}j_{1}},\ldots,c_{i_{q}j_{q}}\rangle$. If there is no such $q$, namely $i_{1}=\dots=i_{n}$, then obviously the above equality becomes:
\[
\tau(c_{i_{1}j_{1}},\ldots,c_{i_{n}j_{n}})=\tau(c_{ul_{1}},\ldots,c_{ul_{n}}).
\]
We add that in the above axioms we suppose that in any expression of terms with constants such as $\tau(c_{m_{1}n_{1}},\ldots,c_{m_{k}n_{k}})$, the sequence $\langle c_{m_{1}n_{1}},$ $\ldots,c_{m_{k}n_{k}}\rangle$ is increasing.
\end{itemize}
\end{definition}

Now we prove the part (ii) of Theorem \ref{maintheorem}. Notationally we will make no difference between the symbols of the language and their interpretations

\begin{proof}[Proof of Theorem \ref{maintheorem} (ii)]
Let $N$ be a model of $\Sigma+\Sigma_{1}$ and $N^{*}\prec N$ be generated by $C$ under the Skolem functions and $\kappa$ be any infinite cardinal. Let $D=\{d_{i}|i<\kappa\}$ be a set of new constant symbols which we add to the language $\mathcal{L}^{S}(C)$ and denote the resulting language by $\mathcal{L}^{S}(C\cup D)$. We introduce a set of axioms $\Pi$ in $\mathcal{L}^{S}(C\cup D)$ and show that (i) $\Pi$ is consistent with $Th(N^{*},\mathcal{L}^{S}(C))$ (ii) for any model $K$ of $\Pi + Th(N^{*},\mathcal{L}^{S}(C))$, if $K^{*}\prec K$ is generated by $C\cup D$ then we have $N^{*}\prec_{eee} K^{*}$. Let $\Pi$ be the following $\mathcal{L}^{S}(C\cup D)$-theory:
\begin{itemize}
\item[(i)] $d_{i}<d_{j}$ \textit{iff} $i<j$.

\item[(ii)] $d_{0}>c_{ij}$ \textit{for any} $i,j$.

\noindent If $\tau(c_{i_{1}j_{1}},\dots,c_{i_{n}j_{n}},c_{(i+1)0},\dots,c_{(i+1)m})<c_{ij}$ for some $i>i_{n}$ and $j$, then
\item[(iii)] \textit{for any
increasing sequence} $\langle d_{l_{0}},\dots,d_{l_{m}}\rangle$:
\begin{center}
$\tau(\overline{c},d_{l_{0}},\dots,d_{l_{m}})=\tau(\overline{c},c_{(i+1)0},\dots,c_{(i+1)m})$,
\end{center}
\textit{where} $\overline{c}=\langle c_{i_{1}j_{1}},\dots,c_{i_{n}j_{n}}\rangle.$

\noindent If for any $i>i_{n}$ and $j$, $\tau(c_{i_{1}j_{1}},\dots,c_{i_{n}j_{n}},c_{(i+1)0},\dots,c_{(i+1)m})>c_{ij}$, then
\item[(iv)] \textit{for any
increasing sequence} $\langle d_{l_{0}},\dots,d_{l_{m}}\rangle$:
\begin{center}
$\tau(c_{i_{1}j_{1}},\dots,c_{i_{n}j_{n}},d_{l_{0}},\dots,d_{l_{m}})>c_{ij}$, \,\,\,\textit{for any} $j$.\\
\end{center}
\end{itemize}

To prove the consistency of $\Pi+Th(N^{*},\mathcal{L}^{S}(C))$, we assume that $\Pi^{'}$ is a finite part of $\Pi$. We show that $N^{*}$ is a model of $\Pi^{'}$ via interpreting the finitely many constant symbols $d_{i}$'s appearing in $\Pi^{'}$ by some suitable $c_{ij}$'s. Let $c_{i_{1}j_{1}},\dots,c_{i_{n}j_{n}}$ be all the elements of $C$ which appeared in $\Pi^{'}$ where $i_{1}\leq\dots\leq i_{n}$. Also suppose $d_{l_{0}},\dots,d_{l_{m}}$ are all the constant symbols from $D$ appearing in $\Pi^{'}$. Now we interpret $d_{l_{0}},\dots,d_{l_{m}}$ by $c_{(i_{n}+1)0},\dots,c_{(i_{n}+1)m}$ in $N^{*}$, respectively. We also interpret all the Skolem terms and all $c_{ij}$'s in $N^{*}$ as in the previous. It is evident $\Sigma_{1}$(ii) will guarantee that all sentences of types of $\Pi$(i) and $\Pi$(ii) occurring in $\Pi^{'}$ hold in $N^{*}$. It remains to show how the above interpretation of $\Pi^{'}$ makes those sentences of types $\Pi$(iii) and $\Pi$(iv) true in $N^{*}$. Consider a sentence of type $\Pi$(iii), say,
\begin{equation}\label{eq1}
\tau(\overline{c},d_{k_{0}},\dots,d_{k_{q}})=\tau(\overline{c},c_{(a+1)0},\dots,c_{(a+1)q}),
\end{equation}
where $\overline{c}=\langle c_{a_{1}b_{1}},\dots,c_{a_{p}b_{p}}\rangle$ and
\begin{center}
$\{c_{a_{1}b_{1}},\dots,c_{a_{p}b_{p}}\}\cup\{c_{(a+1)0},\dots,c_{(a+1)q}\}\subset\{c_{i_{1}j_{1}},\dots,c_{i_{n}j_{n}}\}$,
\end{center}
as well as
$\{d_{k_{0}},\dots,d_{k_{q}}\}\subset\{d_{l_{0}},\dots,d_{l_{m}}\}$.
Since the sentence (\ref{eq1}) is in $\Pi^{'}$, we can deduce that it must already happened that
$\tau(\overline{c},c_{(a+1)0},\dots,c_{(a+1)q})<c_{aj}$,for some $j$.
Then by recalling that $a\leq i_{n}$, $\Sigma_{1}$(iv) would imply that
\begin{center}
$\tau(\overline{c},c_{(a+1)0},\dots,c_{(a+1)q})=
\tau(\overline{c},c_{(i_{n}+1)e_{0}},\dots,c_{(i_{n}+1)e_{q}})$
\end{center}
for any $e_{0},\dots,e_{q}$, in particular when $e_{i}$'s are such that $l_{e_{0}}=k_{0},\dots,l_{e_{q}}=k_{q}$. So $c_{(i_{n}+1)e_{0}}$$,\dots,$$c_{(i_{n}+1)e_{q}}$ interpret $d_{k_{0}},\dots,d_{k_{q}}$, respectively in such a way that $N^{*}$ satisfies the sentence (\ref{eq1}). Similarly consider a sentence of type $\Pi$(iv): fix  $i_{*},j_{*}$ such that
\begin{equation}\label{eq2}
\tau(\overline{c},d_{k_{0}},\dots,d_{k_{q}})>c_{i_{*}j_{*}}.
\end{equation}
According to $\Pi$(iv), it must already happened that for all $j$:
\begin{equation}\label{eq3}
\tau(\overline{c},c_{(i_{*}+1)0},\dots,c_{(i_{*}+1)q})>c_{i_{*}j}.
\end{equation}
We claim that for any $e_{0},\dots,e_{q}$ and for all $j$:
\begin{center}
$\tau(\overline{c},c_{(i_{n}+1)e_{0}},\dots,c_{(i_{n}+1)e_{q}})>c_{i_{*}j}$.
\end{center}
If not, then there are $j^{*}$ and $e^{*}_{0},\dots,e^{*}_{q}$ such that
\[
\tau(\overline{c},c_{(i_{n}+1)e^{*}_0},\dots,c_{(i_{n}+1)e^{*}_q})<c_{i_{*}j^{*}},
\]
but $i_{*}\leq i_{n}$ and in this case, $\Sigma_{1}$(iv) implies that
\begin{center}
$\tau(\overline{c},c_{(i_{*}+1)0},\dots,c_{(i_{*}+1)q})\,=\,
\tau(\overline{c},c_{(i_{n}+1)e^{*}_0},\dots,c_{(i_{n}+1)e^{*}_q})$
\end{center}
therefore $\tau(\overline{c},c_{(i_{*}+1)0},\dots,c_{(i_{*}+1)q})<c_{i_{*}j^{*}}$,
which contradicts the inequality (\ref{eq3}), so we have proved the claim. Again, if $e_{i}$'s are such that $l_{e_{0}}=k_{0},\dots,l_{e_{q}}=k_{q}$, then $c_{(i_{n}+1)e_{0}},$$\dots,c_{(i_{n}+1)e_{q}}$ do interpret $d_{k_{0}},\dots,d_{k_{q}}$, respectively in such a way that $N^{*}$ satisfies the sentence (\ref{eq2}). This completes the proof of (i), namely, $\Pi$ is consistent with $Th(N^{*},\mathcal{L}^{S}(C))$. To demonstrate (ii), let $K$ be a model of $\Pi + Th(N^{*},\mathcal{L}^{S}(C))$ and let $K^{*}\prec K$ be generated by $C\cup D$. Obviously we can identify with $N^{*}$, that elementary submodel of $K^{*}$ which is generated by $C$. We must show that $N^{*}\prec_{eee} K^{*}$. Consider a typical element $\tau(c_{u_{1}v_{1}},\dots,c_{u_{n}v_{n}},d_{l_{0}},\dots,d_{l_{m}})$ of $K^{*}$. For the sake of brevity we write $\overline{c_{uv}}=\langle c_{u_{1}v_{1}},\dots,c_{u_{n}v_{n}}\rangle$. It suffices to show:
\begin{center}
either $\tau(\overline{c_{uv}},d_{l_{0}},\dots,d_{l_{m}})>N^{*}$ or
$\tau(\overline{c_{uv}},d_{l_{0}},\dots,d_{l_{m}})$ $\in N^{*}$.
\end{center}
There are two separate cases: Case (I): for any $u_{n}<u$ and $v$:
\begin{center}
$\tau(\overline{c_{uv}},c_{(u+1)0},\dots,c_{(u+1)m})>c_{uv}$.
\end{center}
Case (II): for some $u_{n}\leq u_{*}$ and $v_{*}$:
\begin{center}
$\tau(\overline{c_{uv}},c_{(u_{*}+1)0},\dots,c_{(u_{*}+1)m})<c_{u_{*}v_{*}}$.
\end{center}
If Case (I) occurs then by $\Pi$(iv) we have for any $u$ and $v$:
\begin{center}
$\tau(\overline{c_{uv}},d_{l_{0}},\dots,d_{l_{m}})>c_{uv}$.
\end{center}
Since $c_{uv}$'s are cofinal in $N^{*}$, this means that
\begin{center}
$\tau(\overline{c_{uv}},d_{l_{0}},\dots,d_{l_{m}})>N^{*}$.
\end{center}
If Case (II) occurs, then $\Pi$(iii) implies that
\begin{center}
$\tau(\overline{c_{uv}},d_{l_{0}},\dots,d_{l_{m}})=
\tau(\overline{c_{uv}},c_{(u_{*}+1)0},\dots,c_{(u_{*}+1)m})$,
\end{center}
which means that
\begin{center}
$\tau(\overline{c_{uv}},d_{l_{0}},\dots,d_{l_{m}})\in N^{*}$.
\end{center}
Therefore the proof of $N^{*}\prec_{eee} K^{*}$ and consequently the proof of the part (ii) of Theorem \ref{maintheorem} is complete.
\end{proof}

\section{First combinatorial property of superlarge sets}

We first note that the set $\Sigma_{1}$ defined in the previous section is ``homogenous" in the sense of Keisler. We call two strictly increasing sequences
\[
\langle c_{i_{1}j_{1}},\dots,c_{i_{n}j_{n}}\rangle,\,\,\,\,\,\langle c_{k_{1}l_{1}},\dots,c_{k_{n}l_{n}}\rangle
\]
{\em similar} iff
\[
i_{p}=i_{q}\,\,\, \mathrm{iff}\,\,\, k_{p}=k_{q},\,\,\,\,\, p,q=1,\dots,n.
\]
Then whenever $\Sigma_{1}$ contains a sentence $\sigma$, it also contains every sentence formed by replacing the sequence of all constants occurring in $\sigma$ by a similar sequence of constants.

Let $C^{*}=\{c_{ij}|i,j<\omega\}$ and let $\Sigma_{1}^{*}$ be the $\mathcal{L}^{S}(C^{*})$-theory such that its sentences are exactly the sentences of $\Sigma_{1}$ except that this time the constants $c_{ij}$'s come from the set $C^{*}$. By homogeneouity, it is easy to see that

\begin{lemma}\label{sigmainfinite} For any $\mathcal{L}^{S}$-theory $\Gamma$, $\Gamma+\Sigma_{1}$ is consistent iff $\Gamma+\Sigma_{1}^{*}$ is consistent.
\end{lemma}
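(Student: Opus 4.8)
The plan is to prove both directions by transferring between a finite fragment of $\Sigma_1$ over the countable index set $C^*$ and a fragment over the full index set $C$, using the homogeneity just noted. The essential observation is that any finite set of sentences of $\Sigma_1$ (or of $\Sigma_1^*$) mentions only finitely many constants, so all the combinatorial content of such a fragment is determined up to the similarity relation by the finite pattern of the $i$-indices of the constants that occur. Similarity is exactly the data of which constants share a first index; replacing a sequence of constants by a similar one preserves membership in $\Sigma_1$ and in $\Sigma_1^*$, so a fragment over one index set can be matched to a fragment over the other provided enough room exists in both first and second coordinates.

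First I would address the direction that if $\Gamma+\Sigma_1^*$ is consistent then $\Gamma+\Sigma_1$ is consistent, which I expect to be the routine direction because $C^* = \{c_{ij} \mid i,j<\omega\}$ embeds into $C$ (recall $\eta = cf(\lambda)\geq\omega$ and each $\mu_i\geq\omega$, so the indices $i,j<\omega$ are available in $C$). By compactness it is enough to show every finite subset of $\Gamma+\Sigma_1$ is consistent. Given such a finite subset, its constants from $C$ carry a finite similarity pattern; I would choose a similar sequence of constants drawn from $C^*$, which is possible since $C^*$ already contains constants realizing every finite similarity type (one can always find distinct first indices and arbitrarily many second indices below $\omega$). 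By homogeneity the corresponding finite fragment lies in $\Sigma_1^*$, and since $\Gamma+\Sigma_1^*$ is consistent the matched fragment has a model; reading the interpretation back through the similarity bijection yields a model of the original finite fragment of $\Gamma+\Sigma_1$.

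The converse direction, that consistency of $\Gamma+\Sigma_1$ gives consistency of $\Gamma+\Sigma_1^*$, runs symmetrically: a finite fragment of $\Gamma+\Sigma_1^*$ mentions finitely many $c_{ij}\in C^*$ with some finite similarity pattern, and I would find a similar sequence of constants inside $C$ realizing that same pattern. This is where the main subtlety lies, since $C$ indexes its first coordinate by $\eta=cf(\lambda)$ and its second coordinates by the $\mu_i$, so I must check that any finite similarity pattern occurring among $C^*$-constants can be realized in $C$. Because only finitely many distinct first indices and finitely many second indices are needed, and both $\eta$ and every $\mu_i$ are infinite, such a realizing sequence always exists; applying homogeneity places the matched fragment in $\Sigma_1$, and consistency of $\Gamma+\Sigma_1$ together with compactness finishes the argument.

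The hard part is purely bookkeeping: one must verify that the similarity relation really is the only invariant that the axioms $\Sigma_1$(ii)--(iv) depend on, so that replacing a sequence by a similar one sends a sentence of $\Sigma_1$ to a sentence of $\Sigma_1$ and likewise for $\Sigma_1^*$. This is exactly the homogeneity property stated before the lemma, and once it is granted the two transfer directions are immediate from compactness. I would therefore keep the proof short, citing homogeneity for the invariance of the axiom schemes under similar substitution and invoking compactness to reduce to finite fragments in each direction.
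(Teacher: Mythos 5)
Your argument is exactly the one the paper intends: the lemma is stated there with no written proof beyond the remark that it follows ``by homogeneity,'' and the standard way to make that precise is precisely your compactness-plus-similarity-transfer in both directions, using that $\eta$ and the $\mu_i$ (resp.\ $\omega$) are infinite so every finite similarity pattern is realizable in either index set. Your proposal is correct and matches the paper's approach; no further comparison is needed.
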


We now move towards proving the combinatorial Propositions \ref{proposition1} which is one of our main tools to prove part (i) of Theorem \ref{maintheorem}. First of all we introduce an important notation in this paper. Suppose $\sigma$ is a sentence of the language $\mathcal{L}^{S}(C^{*})$ and let $r,s$ be large enough positive integers so that for any $c_{ij}$ occurring in $\sigma$, we have $i\leq s$ and $j\leq r$. Let $\textbf{a} \in [F]^{rs}$, namely
\begin{center}
$\textbf{a}=\big{\langle}\langle a_{11},\dots,a_{1r}\rangle,\ldots,
\langle a_{s1},\ldots,a_{sr}\rangle\big{\rangle}$.
\end{center}
By $M\models\sigma(\textbf{a})$, we mean that the sentence $\sigma$ holds in the model $M$, when we substitute any $c_{ij}$ occurring in $\sigma$ by $a_{ij}$. Similarly let $\tau(c_{i_{1}j_{1}},\ldots,c_{i_{n}j_{n}})$ be a term with constants such that $i_{n}\leq s$ and $\max{\{j_{1},\dots,j_{n}\}}\leq r$, we write $\tau(\textbf{a})$ as an abbreviation for $\tau(a_{i_{1}j_{1}},\ldots,a_{i_{n}j_{n}}).$ Obviously this may cause an ambiguity. For example if $\tau(c_{i_{1}j_{1}},\dots,c_{i_{n}j_{n}})$ and $\tau(c_{k_{1}l_{1}},\dots,c_{k_{n}l_{n}})$ are two terms with constants such that $i_{n},k_{n}\leq s$ and $\max{\{j_{1},\dots,j_{n},l_{1},\dots,l_{n}\}}\leq r$, then $\tau(\textbf{a})$ may have two different values. Similar ambiguities may arise also when we deal with $\sigma(\mathbf{a})$, so to avoid such situations, whenever we talk about $\tau(\textbf{a})$ and $\sigma(\mathbf{a})$ everywhere in this paper, we previously determine which set of constants is meant.

It is also useful to consider an \textit{equivalence} relation between tuples of the doubly indexed constants $c_{ij}$ which is a stronger notion than similarity. We call two strictly increasing sequences
\[
\langle c_{i_{1}j_{1}},\dots,c_{i_{n}j_{n}}\rangle,\,\,\,\,\,\langle c_{k_{1}l_{1}},\dots,c_{k_{n}l_{n}}\rangle
\]
{\em equivalent} iff
\[
i_{p}=k_{p}\,\,\, \mathrm{for}\,\,\,\,\, p=1,\dots,n.
\]
Related to the equivalent tuples of constants, we formulate a simple combinatorial Lemma \ref{combinatoriallemma} which will be very useful to organize our arguments in Propositions \ref{proposition1}, \ref{proposition2} in this section and also Proposition \ref{proposition3} in the next section. But before stating it we need to prove a fact about infinite linear orders:
\begin{fact}\label{fact}
Suppose $\langle X,<\rangle$ is an infinite linear ordering. Then for any positive integer $r$, there is $Y\subset X$ such that $|Y|=|X|$ and for any $y_{1}<y_{2}$ in $Y$ there are at least $r$ elements $x_{1}^{(i)},\dots,x_{r}^{(i)}$ $(i=1,2,3)$ in $X$ such that
\[
x_{1}^{(1)},\dots,x_{r}^{(1)}<y_{1}<x_{1}^{(2)},\dots,x_{r}^{(2)}<y_{2}<x_{1}^{(3)},\dots,x_{r}^{(3)}.
\]
We denote the set of all such $Y$ by $X^{\bullet\bullet}$.
\end{fact}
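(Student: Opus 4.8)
The plan is to pass to the \emph{condensation} of $X$ that identifies points lying a finite distance apart, and then to split into cases according to how many classes this produces. Concretely, I would define $x\sim y$ to hold iff the closed interval between $x$ and $y$ is finite. The first step is to check that $\sim$ is an equivalence relation with convex classes; transitivity and convexity follow at once from the fact that closed intervals between comparable points are nested or overlap. The payoff of this relation is a clean structural observation: if $x$ and $y$ lie in \emph{different} classes then there are infinitely many---in particular at least $r$---elements of $X$ strictly between them, whereas a \emph{single} class, having all its closed intervals finite, embeds into $\mathbb{Z}$ via the signed-distance function from a fixed basepoint, and hence has order type a finite ordinal, $\omega$, $\omega^{*}$ or $\zeta=\omega^{*}+\omega$.

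Writing $\kappa=|X|$ and letting $\mu$ be the number of $\sim$-classes, I would note that each class is countable, so $\kappa\le\mu\cdot\aleph_{0}$; this forces a dichotomy. In the first case $\mu=\kappa$ (which always holds when $\kappa$ is uncountable): I pick one representative from each class to form a set $Y_{0}$ of size $\kappa$. Since any two representatives come from distinct classes, there are infinitely many elements of $X$ strictly between any two of them, which immediately supplies the ``between'' requirement and, for non-extreme points, the ``below'' and ``above'' requirements as well. The only remaining problem is the extreme points of $Y_{0}$, which I would repair by deleting its minimum and/or maximum whenever these have fewer than $r$ elements of $X$ on the relevant side; removing finitely many points preserves cardinality $\kappa$, and the new extreme point inherits infinitely many elements beyond it (from the gap to the deleted point), so the full condition holds for every pair $y_{1}<y_{2}$ in the resulting $Y$.

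In the second case $\mu<\kappa$, the inequality $\kappa\le\mu\cdot\aleph_{0}$ collapses to $\kappa=\aleph_{0}$ with $\mu$ finite, so $X$ is a finite union of classes and at least one class $C$ is infinite. Using the embedding of $C$ into $\mathbb{Z}$, $C$ is unbounded in at least one direction, say it contains an increasing cofinal $\omega$-sequence $e_{0}<e_{1}<\cdots$, and I would take $Y=\{\,e_{r+k(r+1)}:k<\omega\,\}$. Then consecutive chosen points enclose exactly $r$ elements of $C$, the least chosen point $e_{r}$ has the $r$ elements $e_{0},\dots,e_{r-1}$ below it, and cofinality in the unbounded direction guarantees infinitely many elements of $X$ above every chosen point; since $Y$ has order type $\omega$ there is no maximum to worry about, and $|Y|=\aleph_{0}=\kappa$.

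I expect the main difficulty to be bookkeeping rather than any single deep step: verifying that $\sim$ is genuinely a convex equivalence relation, correctly classifying the order types of its classes, and---most delicately---handling the boundary behaviour at the minimum and maximum of the candidate set, because the displayed condition must hold simultaneously below $y_{1}$, strictly between $y_{1}$ and $y_{2}$, and above $y_{2}$ for \emph{every} pair. Once the observation ``points in distinct classes have infinitely many elements between them'' is established, the interior of the construction is automatic and only the two endpoints require the small deletion-or-offset argument described above.
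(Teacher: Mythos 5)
Your proof is correct and uses the same two ingredients as the paper's: the finite-distance condensation $\sim$ with a transversal (minus its extreme points) when there are as many classes as points, and a spaced-out monotone $\omega$-sequence with step $r+1$ otherwise. The only differences are organizational --- the paper splits on countable versus uncountable $X$ and invokes the general fact that every infinite linear order contains a strictly monotone $\omega$-sequence, whereas you split on the number of $\sim$-classes and extract the monotone sequence from an infinite class via its embedding into $\mathbb{Z}$; your conditional repair of the endpoints is also slightly more explicit than the paper's unconditional removal of the minimum and maximum of the transversal, but both are sound.
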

\begin{proof}
There are two cases: (i) First suppose $X$ is countable, then it is easily seen that there is an $\omega$-sequence of elements of $X$, $\langle x_{0},\dots,x_{i},\dots\rangle$ for $i<\omega$ which is either strictly increasing or strictly decreasing. So define $y_{0}=x_{0},y_{1}=x_{r+1},\dots, y_{i}=x_{ir+i}$ for $i<\omega$. Then $Y=\{y_{i};i>0\}$ will be as required. (ii) Now suppose $X$ is uncountable. Let $\sim$ be an equivalence relation on $X$ such that $x_{1}\sim x_{2}$ iff there are only finitely many elements of $X$ between $x_{1},x_{2}$. Since $X$ is uncountable, $|X/\sim|=|X|$. Now suppose $Z$ is any subset of $X$ which intersects any equivalence class of $X/\sim$ in exactly one element. Remove from $Z$ its maximum and minimum elements (if there are such elements) and call the new set $Y$ (if not, set $Y=Z$). Now it is easily seen that $Y$ satisfies the condition. In fact between any two elements of $Z$ there are infinitely many elements of $X$. \footnote{I thank Fran\c{c}ois Dorais for giving the proof of the uncountable case in response to my Mathoverflow question.}\end{proof}

\begin{lemma}\label{combinatoriallemma}
Let $\sigma$ be a $\mathcal{L}^{S}(C^{*})$-sentence with parameters and $ c_{i_{1}j_{1}},\dots,c_{i_{n}j_{n}}$ be all constant symbols occurring in $\sigma$ and they are arranged in the increasing order. Assume that $r,s$ are two positive integers such that $i_{n}\leq s$ and $j_{1},\dots,j_{n}\leq r$ and $\kappa_{1},\dots,\kappa_{s}$ are given infinite cardinals. Also suppose that there are ordinals $\beta_{1}<\dots<\beta_{s}<\theta$ together with subsets:
\[
X_{1}\in[F^{-1}(\beta_{1})]^{\kappa_{1}},\dots,X_{s}\in[F^{-1}(\beta_{s})]^{\kappa_{s}},
\]
such that far all $\mathbf{a}\in [X_{1}]^{r}\times\dots\times[X_{s}]^{r}$ we have $M\models\sigma(\mathbf{a})$ or more precisely $M\models\sigma(a_{i_{1}j_{1}},\dots,a_{i_{n}j_{n}})$. \underline{Then} there are subsets
\[
Y_{1}\subset X_{1},\dots,Y_{s}\subset X_{s},\,\,\,
\,\,\,
|Y_{1}|=\kappa_{1},\dots,|Y_{s}|=\kappa_{s}
\]
such that for all $\mathbf{a}\in [Y_{1}]^{r}\times\dots\times[Y_{s}]^{r}$ we have $M\models\sigma(a_{k_{1}l_{1}},\dots,a_{k_{n}l_{n}})$ when
\[
\langle c_{i_{1}j_{1}},\dots,c_{i_{n}j_{n}}\rangle,\,\,\,\,\,\langle c_{k_{1}l_{1}},\dots,c_{k_{n}l_{n}}\rangle
\]
are equivalent and $l_{1},\dots,l_{n}\leq r$.
\end{lemma}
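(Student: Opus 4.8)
The plan is to shrink each $X_i$ by means of Fact \ref{fact} so that the resulting $Y_i$ still has cardinality $\kappa_i$ but leaves enough ``room'' inside $X_i$ around each of its elements, and then to prove the conclusion by a block-by-block padding argument. Concretely, I first apply Fact \ref{fact} to each infinite linear order $X_i$ with the fixed integer $r$, obtaining $Y_i\in X_i^{\bullet\bullet}$ with $|Y_i|=|X_i|=\kappa_i$ and $Y_i\subset X_i\subset F^{-1}(\beta_i)$; these $Y_1,\dots,Y_s$ will be the sets claimed by the lemma. The whole point of passing to $Y_i\in X_i^{\bullet\bullet}$ is that between any two of its elements, as well as below the least and above the greatest selected element, there remain at least $r$ elements of the ambient set $X_i$.

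Now fix any $\mathbf{a}\in[Y_1]^r\times\dots\times[Y_s]^r$ together with an equivalent relabelling $\langle c_{k_1l_1},\dots,c_{k_nl_n}\rangle$, so that $k_p=i_p$ and $l_1,\dots,l_n\le r$; I must show $M\models\sigma(a_{i_1l_1},\dots,a_{i_nl_n})$. The idea is to manufacture a single witness $\mathbf{b}\in[X_1]^r\times\dots\times[X_s]^r$ to which the hypothesis applies and which reproduces, at the \emph{original} column positions $j_p$, exactly the elements $a_{i_pl_p}$. For each block index $i$ put $P_i=\{p:i_p=i\}$; since the relabelled sequence is strictly increasing and equivalent to the original, the integers $\{l_p:p\in P_i\}$ are strictly increasing and $\le r$, so the elements $\{a_{il_p}:p\in P_i\}$ form a strictly increasing tuple drawn from $Y_i$. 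I build $\mathbf{b}_i\in[X_i]^r$ by placing $a_{il_p}$ at coordinate $j_p$ of $\mathbf{b}_i$ for every $p\in P_i$ (consistent, because the $j_p$ with $p\in P_i$ are themselves strictly increasing and $\le r$) and filling the remaining coordinates with elements of $X_i$ of the appropriate size. The number of fillers needed below the least active element, strictly between two consecutive active elements, and above the greatest active element is at most $r-1$ in each case; since $Y_i\in X_i^{\bullet\bullet}$ provides at least $r$ members of $X_i$ in each such region, every gap can be filled, yielding $\mathbf{b}_i\in[X_i]^r$. Setting $\mathbf{b}=\langle\mathbf{b}_1,\dots,\mathbf{b}_s\rangle$ gives $\mathbf{b}\in[X_1]^r\times\dots\times[X_s]^r$ because $\beta_1<\dots<\beta_s$.

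By the hypothesis of the lemma, $M\models\sigma(\mathbf{b})$, that is $M\models\sigma(b_{i_1j_1},\dots,b_{i_nj_n})$. By construction $b_{i_pj_p}=(\mathbf{b}_{i_p})_{j_p}=a_{i_pl_p}$ for every $p$, so this assertion is literally $M\models\sigma(a_{i_1l_1},\dots,a_{i_nl_n})$, which is exactly what was required for this $\mathbf{a}$ and this relabelling. As $\mathbf{a}$ and the relabelling were arbitrary, the lemma follows.

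The one step I expect to require real care is the book-keeping in the padding construction: one must verify that the target positions $j_p\le r$ cut the index set $\{1,\dots,r\}$ into consecutive gaps of size strictly less than $r$, and that in each gap the $X_i^{\bullet\bullet}$ property genuinely supplies enough distinct members of $X_i$ in the correct order interval, with attention to the degenerate configurations (a block containing a single active constant, or active elements sitting near an end of $Y_i$). Fact \ref{fact} was designed precisely to guarantee this room, so once the three types of gap regions are laid out explicitly I expect the verification to reduce to routine counting.
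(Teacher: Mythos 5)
Your proposal is correct and follows essentially the same route as the paper's own proof: pass to $Y_i\in X_i^{\bullet\bullet}$ via Fact \ref{fact} and then, for each $\mathbf{a}$ and each equivalent relabelling, pad out to a witness $\mathbf{b}\in[X_1]^r\times\dots\times[X_s]^r$ with $b_{i_pj_p}=a_{i_pl_p}$ to which the hypothesis applies. The paper merely asserts the existence of such a $\mathbf{b}$, whereas you spell out the gap-counting that Fact \ref{fact} was designed to support; this is a faithful elaboration rather than a different argument.
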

\begin{proof}
According to Fact \ref{fact}, let
\[
Y_{1}\in X_{1}^{\bullet\bullet},\dots,Y_{s}\in X_{s}^{\bullet\bullet}
\]
for $i=1,\dots,s$. Now this gives us the possibility that for any $\mathbf{a}\in [Y_{1}]^{r}\times\dots\times[Y_{s}]^{r}$ we can choose a $\mathbf{b}\in [X_{1}]^{r}\times\dots\times[X_{s}]^{r}$ such that
\[
\langle b_{i_{1}j_{1}},\dots,b_{i_{n}j_{n}}\rangle=\langle a_{k_{1}l_{1}},\dots,a_{k_{n}l_{n}}\rangle.
\]
Now by the hypothesis we have $M\models\sigma(b_{i_{1}j_{1}},\dots,b_{i_{n}j_{n}})$, hence the above equality implies that $M\models\sigma(a_{k_{1}l_{1}},\dots,a_{k_{n}l_{n}})$ which proves the lemma.
\end{proof}

Now suppose $\sigma$ is a sentence of type $\Sigma_{1}^{*}$(iv). In order to state our proposition we need to keep track of the index $i_{n}$ occurring in $\sigma$ in the course of the proof, so for the sake of the easy readability, we denote it by the function $\iota(\sigma)=i_{n}$.

\begin{proposition}\label{proposition1}
Let $S\subset [F]^{r,s}$ be an e-superlarge set $(e<s)$. Suppose $\sigma$ is a sentence of type $\Sigma_{1}^{*}(iv)$ so that for all $c_{ij}$ occurring in $\sigma$ we have $i\leq s$ and $j\leq r$ and $\iota(\sigma)=e^{'}>e$. Then there is an $e^{'}$-superlarge set $S^{'}\subset S$ such that for any $\mathbf{a}\in S^{'}$ we have $M\models\sigma(\mathbf{a})$.
\end{proposition}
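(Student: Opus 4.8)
The plan is to take $S'=\{\mathbf{a}\in S:M\models\sigma(\mathbf{a})\}$. This is contained in $S$ and carries $\sigma$ by construction, so the entire task reduces to showing that $S'$ is $e'$-superlarge, i.e. that player II has a winning strategy in $G(S',e')$. I would manufacture such a strategy from a fixed winning strategy $\Theta$ of II in $G(S,e)$, which exists since $S$ is $e$-superlarge. During the first $e$ rounds II just copies $\Theta$, producing $\beta_{1}<\dots<\beta_{e}$; once I has played $\mu_{e}$, II privately reads off the entire tail that $\Theta$ would play, a family of blocks whose sizes have supremum $\theta$. Because $\theta$ is strongly inaccessible, every relevant cardinal below $\theta$ (in particular each $\beth_{r-1}(\mu_{i})^{+}$) is bounded by the size of some tail block, so II can answer the extra cardinals $\mu_{e+1},\dots,\mu_{e'}$ by promoting successive tail blocks of $\Theta$ to the new fixed levels $\beta_{e+1},\dots,\beta_{e'}$, keeping the remaining higher tail blocks --- still with sizes cofinal in $\theta$ --- as the new tail. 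The raw product produced this way already sits inside $S$, being a subproduct of $\Theta$'s winning product.

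The heart of the matter is to thin the raw blocks so that $\sigma$ holds everywhere. Here I would use the rigid shape of $\sigma$ as a sentence of type $\Sigma_{1}^{*}(iv)$ with $\iota(\sigma)=e'$: it is an implication whose antecedent states that a term value $\tau(\dots)$ lies below a fixed entry $b$ of block $e'-1$, and whose conclusion says this value is unchanged when the block-$e'$ constants are replaced by constants from any block $u\ge e'$ with arbitrary columns. The crucial point is that any value below $b$ lies in $M_{\beta_{e'-1}}$, and $|M_{\beta_{e'-1}}|<\theta$ by $\theta$-likeness (a bounded initial part of a $\theta$-like model is small). I would therefore colour each product tuple $\mathbf{a}$ by the capped value of the relevant term --- its true value when that is $<b$, and a single symbol $\star$ otherwise. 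There are at most $|M_{\beta_{e'-1}}|+1<\theta$ colours, so strong inaccessibility makes the polarized relation of Corollary \ref{usefulcorollary} applicable (starting, by the room secured above, from blocks exceeding the needed $\beth_{r-1}$-bounds), and it returns homogeneous sub-blocks of the prescribed sizes.

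On the monochromatic product the dichotomy is decisive: if the constant colour is $\star$ the antecedent of $\sigma$ fails identically and $\sigma$ holds vacuously, while otherwise the term is literally constant, equal to some fixed $v<b$, which delivers the conclusion of $\sigma$ for $u=e'$. I would then invoke Fact \ref{fact} and the combinatorial Lemma \ref{combinatoriallemma} to move from the particular columns occurring in $\sigma$ to all equivalent column patterns, matching the ``arbitrary $l_{1},\dots,l_{n-q}$'' of clause (iv); reassembling, the refined product lands in $S'$ and II wins $G(S',e')$.

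The step I expect to be the real obstacle is the case $u>e'$ of the conclusion, where the constant value read from the fixed top block $e'$ must coincide with the value read from a tail block $u$, and --- since the tail coordinate of a product tuple sweeps across cofinally many levels --- with the value on every usable tail block. My intended remedy exploits that II supplies $\beta_{e'}$ and the whole tail in one and the same last move, so these choices can be coordinated: first refine the $\theta$ many tail blocks by the same capped-value colouring and, as there are fewer than $\theta$ colours, use the regularity of $\theta$ to retain cofinally many tail blocks carrying one common value $v^{*}$; since the set of tuples with value $v^{*}$ is then definable over $M_{\beta_{e'-1}}$ and unbounded, hence of size $\theta$ and large at cofinally many levels, place the top block $X_{e'}$ at such a level so that its value is forced to be $v^{*}$ as well. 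Arranging this so that the level and size demands stay compatible with the partition bounds, and with the varying entry $b$ of block $e'-1$, is the delicate bookkeeping on which the argument really turns.
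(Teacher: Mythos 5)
Your proposal is correct and follows essentially the same route as the paper's proof: the same $S'=\{\mathbf{a}\in S: M\models\sigma(\mathbf{a})\}$, the same transfer of player II's strategy from $G(S,e)$ with intermediate tail blocks promoted to the new levels, the same capping of the term's value into $M_{\beta_{e'-1}}\cup\{\star\}$ followed by Erd\H{o}s--Rado homogenization of the blocks II controls, and --- for the key synchronization of the top block $e'$ with the whole tail --- the same pigeonhole over fewer than $\theta$ colour patterns among $\theta$ many tail levels, finished off via Fact \ref{fact} and Lemma \ref{combinatoriallemma} to cover arbitrary column indices. The one caveat is that your ``common value $v^{*}$'' must really be a common \emph{function} from prefix tuples to $M_{\beta_{e'-1}}\cup\{\star\}$ (still at most $2^{|M_{\beta_{e'-1}}|}<\theta$ possibilities), exactly as in the paper's functions $G_{i}$; colouring the full product including the prefix blocks would shrink them below the sizes $\mu_{1},\dots,\mu_{e}$ demanded by player I.
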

\begin{proof}
Suppose $\tau(c_{i_{1}j_{1}},\dots,c_{i_{q}j_{q}},\dots,c_{i_{n}j_{n}})$ and $q$ are as in the item (iv) of $\Sigma_{1}^{*}$. Set
\[
S^{'}=\{\textbf{a}\in S \, | \, M\models\sigma(\textbf{a})\}.
\]
We show that $S^{'}$ is $e^{'}$-superlarge. This will be done if we find a winning strategy:
\begin{center}
$\beta_{1}(\mu_{1}),\dots,\beta_{e^{'}}(\mu_{1},\dots,\mu_{e^{'}}),\dots,\beta_{e^{'}+i}(\mu_{1},\dots,\mu_{e^{'}}),\dots$, \,\,\,\, $i<\theta$,
\end{center}
for the player II in the game $G(S^{'},e^{'})$. Suppose the player I plays with a strategy
\[
\mu_{1},\mu_{2}(\beta_{1}),\dots,\mu_{e^{'}}(\beta_{1},\dots,\beta_{e^{'}-1}).
\]
So our task is finding $\beta_{i}$ such that guarantee the win of the player II. Since $S$ is $e$-superlarge, then the player II has a winning strategy for the game $G(S,e)$:
\begin{center}
$\gamma_{1}(\mu_{1}),\dots,\gamma_{e}(\mu_{1},\dots,\mu_{e}),\dots,\gamma_{e+i}(\mu_{1},\dots,\mu_{e}),\dots$, \,\,\,\, $i<\theta$,
\end{center}
so that $\gamma_{1}<\gamma_{2}<\dots<\gamma_{i}<\cdots$ for $1\leq i<\theta$ and
there exist sets
\begin{equation}\label{X}
X_{1}\in[F^{-1}(\gamma_{1})]^{\mu_{1}},\dots,X_{e}\in[F^{-1}
(\gamma_{e})]^{\mu_{e}}
\end{equation}\label{subset}
as well as the following sets for $1\leq i<\theta$:
\begin{equation}
X_{e+i}\subset F^{-1}(\gamma_{e+i}),
\end{equation}
such that
\begin{equation}\label{theta}
\sup\bigr\{|X_{e+i}|;i<\theta\bigr\}=\theta
\end{equation}
and
\begin{equation}\label{big}
\displaystyle{\prod_{1\leq i\leq e}}\big{[}X_{i}\big{]}^{r}
\times \,\,\,\big{[}F|(\!\!\displaystyle{\bigcup_{e<i<\theta}}X_{i})\big{]}^{r,f}\subset S,
\end{equation}
where $f=s-e$.

Now assume that in the game $G(S^{'},e^{'})$, the player II for his first $e$ moves, plays according to his winning strategy in the game $G(S,e)$. More precisely:
\begin{center}
$\beta_{j}(\mu_{1},\dots,\mu_{j})=\gamma_{j}(\mu_{1},\dots,\mu_{j})$,\,\,\, for $1\leq j\leq e$.
\end{center}

The next step of our task is to define $\beta_{j}$ for $e<j<e^{'}$. Note that if $e^{'}=e+1$, there is nothing to do in this case. So assume that $e+d=e^{'}$ such that $d>1$. For any $1\leq j<d$, define $k_{j}$ (inductively) to be the least ordinal $<\theta$ such that $\gamma_{k_{j}}>\beta_{e+j-1}$ and also for the correspondent subset $X_{k_{j}}\subset F^{-1}(\gamma_{k_{j}})$, we have $|X_{k_{j}}|\geq \mu_{e+j}$. Thus for $1\leq j\leq d-1$ put
\begin{equation}\label{betagamma}
\beta_{e+j}(\mu_{1},\dots,\mu_{e+j})=\gamma_{k_{j}}(\mu_{1},\dots,\mu_{e}).
\end{equation}

The more challenging case is defining $\beta_{j}$'s for $e^{'}\!\leq j<\!\theta$, namely the last move of the player II, where the player I has played $\mu_{e^{'}}$ in his last move. Let $|M_{\beta_{\!e^{'}-1}}|=\pi_{*}$ and for simplicity denote $M_{\beta_{\!e^{'}\!\!-1}}$ by $M_{*}$. Let $\langle\pi_{i};i<\theta\rangle$ be a sequence of strictly increasing cardinals $<\theta$ such that $\pi_{0}\geq\max\{2^{\pi_{*}}\!,\mu_{e^{'}}\}$. By induction we define a strictly increasing function
\[
g\colon \theta\longrightarrow \{i\,;\,\,k_{d-1}+1\leq i<\theta\}
\]
such that $g(i)$ is the least ordinal such that $|X_{g(i)}|\geq(\beth_{r-1}(\pi_{i}))^{+}$. In fact the strong inaccessibility of $\theta$ and the relation (\ref{theta}) guarantee the existence of such $g$. Note that if $e+1=e^{'}$, we replace $k_{d-1}$ by $e$ in the definition of $g$.
In continuation we need to find some suitable subsets $Z_{g(i)}$ of $X_{g(i)}$ for $i<\theta$ by using the Erd\"{o}s-Rado partition theorem \ref{erdosradop}. For any $i<\theta$, any $\alpha\in M_{*}$ and any
\[
\textbf{a}\in \displaystyle{\prod_{i=1}^{e}\big{[}X_{i}\big{]}^{r}}\times\displaystyle{\prod_{i=1}^{d-1}\big{[}X_{k_{i}}\big{]}^{r}},
\]
put
\[
P^{i}_{\textbf{a},\alpha}=\bigr\{\mathbf{x}\in[X_{g(i)}]^{r}; \tau(\textbf{a},\mathbf{x})=\alpha\bigr\},
\]
where $\tau$ is as mentioned in the first line of the proof (note that $\langle\mathbf{a},\mathbf{x}\rangle\in[F]^{r,f^{'}}$ and according to our convention, $\tau(\mathbf{a},\mathbf{x})$ is well-defined). Also suppose $\star$ is a new symbol different from all elements of $M_{*}$. For the above mentioned $i<\theta$ and $\textbf{a}$ put also
\[
P^{i}_{\textbf{a},\star}=\bigr\{\mathbf{x}\in[X_{g(i)}]^{r}; \tau(\textbf{a},\mathbf{x})>M_{*}\bigr\}.
\]
It is evident that fixing $i$ and $\textbf{a}$ as above, the set $\bigr\{P^{i}_{\textbf{a},\alpha}|\alpha\in M_{*}\cup\{\star\}\bigr\}$ becomes a partition of $[X_{g(i)}]^{r}$. We denote the partition relation by $\mathcal{R}^{i}_{\textbf{a}}$. In other words for any
$\mathbf{x}_1,\mathbf{x}_2$ in $[X_{g(i)}]^{r}$,
we have
$\mathbf{x}_1\mathcal{R}^{i}_{\textbf{a}}\mathbf{x}_2$
iff there exists $\alpha\in M_{*}\cup\{\star\}$ such that
$\mathbf{x}_1,\mathbf{x}_2\in P^{i}_{\textbf{a},\alpha}$.
Now for any $i<\theta$, let $\mathcal{R}^{i}$ be the following partition relation:
\begin{center}
$\forall\,\textbf{x}_{1},\textbf{x}_{2}\in [X_{g(i)}]^{r}$:\,\,\, $\textbf{x}_{1}\,\mathcal{R}^{i}\,\textbf{x}_{2}$ \,\, iff \,\, $\forall \,\textbf{a}\in\displaystyle{\prod_{i=1}^{e}\big{[}X_{i}\big{]}^{r}}\times\displaystyle{\prod_{i=1}^{d-1}\big{[}X_{k_{i}}\big{]}^{r}}\!\!,\,\,
\textbf{x}_{1}\,\mathcal{R}^{i}_{\textbf{a}}\,\textbf{x}_{2}$.
\end{center}
All $\mathcal{R}^{i}$'s have the same number of partition classes, that is, it does not depend on $i<\theta$. Let $\chi$ be the cardinality of the partition classes, then it is easily seen that
\[
\chi\leq|M_{*}|^{|M_{*}|}=2^{|M_{*}|}=2^{\pi_{*}}\leq\pi_{0}.
\]
Note that in a partition relation we can make the cardinals in the right side of the relation, smaller and also the cardinals in the left side of the relation, bigger. So by the Erd\"{o}s-Rado partition relation, for any $i<\theta$, we have
\begin{center}
$\beth_{r-1}(\pi_{i})^{+}\longrightarrow(\pi_{i}^{+})^{r}_{\chi}$.
\end{center}
Recall $|X_{g(i)}|\geq(\beth_{r-1}(\pi_{i}))^{+}$, therefor for $i<\theta$ there is a subset $Z_{g(i)}\subset X_{g(i)}$ such that $[Z_{g(i)}]^{r}$ lies in one partition class of $\mathcal{R}^{i}$ and $|Z_{g(i)}|=\pi_{i}^{+}$. This means that for each $i<\theta$ there is a function
\[
G_{i}\colon\displaystyle{\prod_{i=1}^{e}\big{[}X_{i}\big{]}^{r}}\times\displaystyle{\prod_{i=1}^{d-1}\big{[}X_{k_{i}}\big{]}^{r}}\longrightarrow M_{*}\cup\{\star\}
\]
such that if $G_{i}(\textbf{a})=\alpha\in M_{*}$, then for all $\mathbf{x}\in[Z_{g(i)}]^{r}$ we have
$\tau(\textbf{a},\mathbf{x})=\alpha$
and if $G_{i}(\textbf{a})=\star$, then for all $\mathbf{x}\in[Z_{g(i)}]^{r}$ we have
$\tau(\textbf{a},\mathbf{x})>M_{*}$.

Since the cardinality of all such functions is at most $|M_{*}|^{|M_{*}|}<\theta$, then there is a strictly increasing function $h\colon\theta\longrightarrow\theta$, such that for any $i,j<\theta$ we have
\begin{equation}\label{G}
G_{h(i)}=G_{h(j)}.
\end{equation}

Now we are ready to define the desired $\langle \beta_{e^{'}+i};1\leq i<\theta\rangle$ as follows:
\begin{equation}\label{lastbeta}
\beta_{e^{'}+i}=\gamma_{g(h(i))},\,\, i<\theta.
\end{equation}

After completing the description of the strategy of the player II in the game $G(S^{'},e^{'})$, it remains to show that it is a winning strategy. Clearly our definitions implies that $\beta_{i}$'s are strictly increasing. Then we must show that there are subsets
\begin{equation}\label{Y}
Y_{1}\in[F^{-1}(\beta_{1})]^{\mu_{1}},\dots,Y_{e^{'}}\in[F^{-1}
(\beta_{e^{'}})]^{\mu_{\!e^{'}}}
\end{equation}
together with subsets
\begin{equation}\label{subset2}
Y_{e^{'}+i}\subset F^{-1}(\beta_{i})
\end{equation}
for $i<\theta$ such that
\begin{equation}\label{theta2}
\sup\bigr\{|Y_{e^{'}+i}|;i<\theta\bigr\}=\theta
\end{equation}
and
\begin{equation}\label{big2}
\displaystyle{\prod_{1\leq i\leq e^{'}}}\big{[}Y_{i}\big{]}^{r}
\times \big{[}F|(\displaystyle{\bigcup_{1\leq i<\theta}}Y_{e^{'}+i})\big{]}^{r,f^{'}}\subset S^{'},
\end{equation}
where $f^{'}=s-e^{'}$.

Our strategy to define $Y_{i}$ will be as follows: we first define sets $Y_{i}^{*}$ such that they satisfy the relations (\ref{Y}), (\ref{subset2}), (\ref{theta2}). Then by the support of Lemma \ref{combinatoriallemma} we will find $Y_{i}\in (Y_{i}^{*})^{\bullet\bullet}$ which satisfy (\ref{big2}). Obviously $Y_{i}$ will automatically satisfy (\ref{Y}), (\ref{subset2}), (\ref{theta2}).

For $1\leq i\leq e$, let $Y_{i}^{*}=X_{i}$ and for $e<i<e^{'}$, let $Y_{i}^{*}=X_{k_{i-e}}$. Also for $i<\theta$, let $Y_{e^{'}+i}^{*}=Z_{g(h(i))}$.
The corresponding relations (\ref{subset2}), (\ref{Y}) hold for $Y_{i}^{*}$ because
\begin{center}
$Y_{i}^{*}=X_{i}\in [F^{-1}(\gamma_{i})]^{\mu_{i}}=[F^{-1}(\beta_{i})]^{\mu_{i}}$,\,\,\, for $1\leq i\leq e$.
\end{center}
\begin{center}
$Y_{i}^{*}=X_{k_{i-e}}\in [F^{-1}(\gamma_{k_{i-e}})]^{\mu_{i}}=[F^{-1}(\beta_{i})]^{\mu_{i}}$,\,\,\, for $e<i<e^{'}$.
\end{center}
\begin{center}
$Y_{e^{'}+i}^{*}= Z_{g(h(i))}\subset X_{g(h(i))}\subset F^{-1}(\gamma_{g(h(i))})=F^{-1}(\beta_{e^{'}+i})$,\,\,\, for $i<\theta$.
\end{center}
Note that since $h\colon\theta\longrightarrow\theta$ is a strictly increasing function, then we have $h(i)\geq i$ for each $i<\theta$, hence for $i<\theta$:
\[
|Y_{e^{'}+i}^{*}|=|Z_{g(h(i))}|\geq |Z_{g(i)}|\geq \pi_{i}^{+},
\]
So $\sup\bigr\{|Y_{e^{'}+i}^{*}|;i<\theta\bigr\}=\theta$. Also $|Y_{e^{'}}^{*}|\geq\pi_{0}^{+}>\pi_{0}\geq\mu_{e^{'}}$. Of course this will not cause a problem since we can easily replace $Y_{e^{'}}^{*}$ by each one of its subsets of cardinality $\mu_{e^{'}}$.  Also it is not hard to see that
\begin{equation}\label{Y*}
\displaystyle{\prod_{1\leq i\leq e^{'}}}\big{[}Y_{i}^{*}\big{]}^{r}
\times\big{[}F|(\displaystyle{\bigcup_{1\leq i<\theta}}Y_{e^{'}+i}^{*})\big{]}^{r,f^{'}}\subset S.
\end{equation}
[Why? Observe that
\begin{center}
$\displaystyle{\prod_{1\leq i\leq e^{'}}}\big{[}Y_{i}^{*}\big{]}^{r}
\times \big{[}F|(\displaystyle{\bigcup_{1\leq i<\theta}}Y_{e^{'}+i}^{*})\big{]}^{r,f^{'}}
=
\displaystyle{\prod_{1\leq i\leq e}}\big{[}Y_{i}^{*}\big{]}^{r}
\times
\displaystyle{\prod_{e<i\leq e^{'}}}\big{[}Y_{i}^{*}\big{]}^{r}
\times
\big{[}F|(\displaystyle{\bigcup_{1\leq i<\theta}}Y_{e^{'}+i}^{*})\big{]}^{r,f^{'}}$.
\end{center}
The right side of the above equality can be rewritten as
\begin{center}
$\displaystyle{\prod_{1\leq i\leq e}}\big{[}X_{i}\big{]}^{r}
\times
\displaystyle{\prod_{e<i<e^{'}}}\big{[}X_{k_{i-e}}\big{]}^{r}
\times
\big{[}Z_{g(h(0))}\big{]}^{r}
\times
\big{[}F|(\displaystyle{\bigcup_{1\leq i<\theta}}Z_{g(h(i))})\big{]}^{r,f^{'}}$
\end{center}
which is a subset of
\begin{center}
$(\clubsuit)\displaystyle{\prod_{1\leq i\leq e}}\big{[}X_{i}\big{]}^{r}
\times
\displaystyle{\prod_{e<i<e^{'}}}\big{[}X_{k_{i-e}}\big{]}^{r}
\times
\big{[}X_{g(h(0))}\big{]}^{r}
\times
\big{[}F|(\displaystyle{\bigcup_{1\leq i<\theta}}X_{g(h(i))})\big{]}^{r,f^{'}}$
\end{center}
But we have $(d-1)+1+f^{'}=f$ and for $i<\theta$
\begin{center}
$e<k_{1}<\dots<k_{d-1}<g(h(0))<g(h(1))<\dots<g(h(i))<\cdots$
\end{center}
so we deduce that ($\clubsuit$) is contained in
\begin{center}
$\displaystyle{\prod_{1\leq i\leq e}}\big{[}X_{i}\big{]}^{r}
\times \,\,\,\big{[}F|(\!\!\displaystyle{\bigcup_{1\leq i<\theta}}X_{e+i})\big{]}^{r,f}$
\end{center}
which is a subset of $S$ by (\ref{big}). Thus we have proved (\ref{Y*}).]

Now for the moment we digress from the sentence $\sigma$ and consider a related sentence $\sigma^{*}$. Let $\sigma^{*}$ be the sentence obtained from $\sigma$ as follows: we replace indices $l_1,\dots,\l_{n-q}$ by $j_{q+1},\dots,j_{n}$ respectively. We claim that
\begin{equation}\label{sigma*}
\forall\,\textbf{x}\in\displaystyle{\prod_{1\leq i\leq e^{'}}}\big{[}Y_{i}^{*}\big{]}^{r}
\times\big{[}F|(\displaystyle{\bigcup_{1\leq i<\theta}}Y_{e^{'}+i}^{*})\big{]}^{r,f^{'}} M\models\sigma^{*}(\textbf{x}).
\end{equation}
Suppose
\[
\textbf{g}=\langle \textbf{g}_{1},\dots,\textbf{g}_{s}\rangle\in\displaystyle{\prod_{1\leq i\leq e^{'}}}\big{[}Y_{i}^{*}\big{]}^{r}
\times\big{[}F|(\displaystyle{\bigcup_{1\leq i<\theta}}Y_{e^{'}+i}^{*})\big{]}^{r,f^{'}}
\]
and for $1\leq i\leq s$, $\textbf{g}_{i}=\langle g_{i1},\dots,g_{ir}\rangle$, so if $\tau(g_{i_{1}j_{1}},\ldots,g_{e^{'}j_{n}})\geq g_{(e^{'}-1)j}$,
then obviously $M\models\sigma^{*}(\textbf{g})$. So we assume that
\begin{equation}\label{b1}
\tau(g_{i_{1}j_{1}},\ldots,g_{e^{'}j_{n}})<g_{(e^{'}-1)j},
\end{equation}
but $g_{(e^{'}-1)j}\in Y^{*}_{e^{'}-1}\subset F^{-1}(\beta_{e^{'}-1})\subset M_{*}$, therefore we must show
\begin{equation}\label{b2}
\tau(\underline{g},g_{e^{'}j_{q+1}},\ldots,g_{e^{'}j_{n}})=\tau(\underline{g},g_{uj_{q+1}},\ldots,g_{uj_{n}}),
\end{equation}
where $\underline{g}=\langle g_{i_{1}j_{1}},\ldots,g_{i_{q}j_{q}}\rangle$, $u>e^{'}$ and $q$ is the greatest integer such that $i_{q}\neq e^{'}.$ For $1\leq i\leq e^{'}$ we have $\textbf{g}_{i}\in[Y_{i}^{*}]^{r}$. Let $v_{1}<\dots<v_{f^{'}}<\theta$ be such that
\begin{center}
$\textbf{g}_{e^{'}}\in[Y_{e^{'}}]^{r}, \textbf{g}_{e^{'}+1}\in[Y_{e^{'}+v_{1}}]^{r},\dots,\textbf{g}_{e^{'}+f^{'}}\in[Y_{e^{'}+v_{f^{'}}}]^{r}$ .
\end{center} Also assume that $\langle \textbf{g}_{1},\dots,\textbf{g}_{e^{'}-1}\rangle=\textbf{a}$. In order to avoid ambiguity when replacing $c_{ij}$'s by $\textbf{g}$ in term $\tau$, we define
\begin{eqnarray*}
\tau^{\mathrm{right}}&=&\tau(c_{i_{1}j_{1}},\ldots,c_{i_{q}j_{q}},c_{uj_{q+1}},\ldots,c_{uj_{n}}),\\
\tau^{\mathrm{left}}\,\,\,&=&\tau(c_{i_{1}j_{1}},\ldots,c_{i_{q}j_{q}},c_{i_{q+1}j_{q+1}},\ldots,c_{i_{n}j_{n}}).
\end{eqnarray*}
Hence the equation (\ref{b2}) equivalently can be rewritten as
\begin{equation}\label{b3}
\tau^{\mathrm{left}}(\textbf{a},\textbf{g}_{e^{'}})=\tau^{\mathrm{right}}(\textbf{a},\textbf{g}_{e^{'}+u}),\,\,\,1\leq u\leq f^{'}.
\end{equation}
Recall that
\[
\textbf{a}\in \displaystyle{\prod_{i=1}^{e}\big{[}X_{i}\big{]}^{r}}\times\displaystyle{\prod_{i=1}^{d-1}\big{[}X_{k_{i}}\big{]}^{r}}
=\displaystyle{\prod_{i=1}^{e^{'}}}\big{[}\,Y_{i}^{*}\big{]}^{r},
\]
$Y_{e^{'}}^{*}=Z_{g(h(0))}$ and for $1\leq j\leq f^{'}$, $Y_{e^{'}+v_{j}^{*}}=Z_{g(h(v_{j}))}$. By (\ref{G}) we have
\[
G_{h(0)}(\,\textbf{a})=G_{h(v_{j})}(\,\textbf{a})\,\,\in M_{*}\cup\{\star\},
\]
which means that \underline{either}, there is an $\alpha\in M_{*}$ such that for all $\textbf{z}\in [Y_{e^{'}}^{*}]^{r}=[Z_{g(h(0))}]^{r}$ and all $\textbf{z}^{'}\in[Y_{e^{'}+v_{j}}^{*}]^{r}=[Z_{g(h(v_{j}))}]^{r}$ we have
\begin{equation}\label{equalalpha}
\tau^{\mathrm{left}}(\textbf{a},\textbf{z})=\tau^{\mathrm{left}}(\textbf{a},\textbf{z}^{'})=\alpha
\end{equation}
\underline{or}, for all $\textbf{z}\in [Y_{e^{'}}^{*}]^{r}=[Z_{g(h(0))}]^{r}$ we have
\begin{equation}\label{greaterthanM}
\tau^{\mathrm{left}}(\textbf{a},\textbf{z})>M_{*}.
\end{equation}
According to (\ref{b1}), we deduce that the relation (\ref{greaterthanM}) cannot happen, so by (\ref{equalalpha}) for all $1\leq u\leq f^{'}$ we have
\[
\tau^{\mathrm{left}}(\textbf{a},\textbf{g}_{e^{'}})=\tau^{\mathrm{left}}(\textbf{a},\textbf{g}_{e^{'}+u}).
\]
Since $1\leq u$ the relation (\ref{equalalpha}) also implies that
\[
\tau^{\mathrm{left}}(\textbf{a},\textbf{g}_{e^{'}+u})=\tau^{\mathrm{right}}(\textbf{a},\textbf{g}_{e^{'}+u}),
\]
which implies that
\[
\tau^{\mathrm{left}}(\textbf{a},\textbf{g}_{e^{'}})=\tau^{\mathrm{right}}(\textbf{a},\textbf{g}_{e^{'}+u}).
\]
This proves what we claimed in (\ref{sigma*}).

Now for $0<i<\theta$ let $Y_{i}$ be any member of $(Y_{i}^{*})^{\bullet\bullet}$. By (\ref{Y*}) we have
\begin{equation}\label{YY}
\displaystyle{\prod_{1\leq i\leq e^{'}}}\big{[}Y_{i}\big{]}^{r}
\times\big{[}F|(\displaystyle{\bigcup_{1\leq i<\theta}}Y_{e^{'}+i})\big{]}^{r,f^{'}}\subset S.
\end{equation}
Note that the the following two sequences are equivalent:
\[
\langle c_{i_{1}j_{1}},\dots,c_{i_{q}j_{q}},c_{e^{'}j_{q+1}},\dots,c_{e^{'}j_{n}},c_{uj_{q+1}},\dots,c_{uj_{n}}\rangle
\]
\[
\langle c_{i_{1}j_{1}},\dots,c_{i_{q}j_{q}},c_{e^{'}j_{q+1}},\dots,c_{e^{'}j_{n}},c_{ul_{1}},\dots,c_{ul_{n-q}}\rangle
\]

The first sequence is the set of all constant symbols appearing in $\sigma^{*}$ and the second sequence shows the set of all constant symbols appearing in $\sigma$. Now from the claim (\ref{sigma*}) and Lemma \ref{combinatoriallemma}, it follows that
\begin{equation}\label{sigma}
\forall\,\mathbf{x}\in\displaystyle{\prod_{1\leq i\leq e^{'}}}\big{[}Y_{i}\big{]}^{r}
\times\big{[}F|(\displaystyle{\bigcup_{1\leq i<\theta}}Y_{e^{'}+i})\big{]}^{r,f^{'}} M\models\sigma(\mathbf{x}).
\end{equation}
Putting together the relations (\ref{sigma}), (\ref{YY}) and also the definition of $S^{'}$, we deduce that
\[
\displaystyle{\prod_{1\leq i\leq e^{'}}}\big{[}Y_{i}\big{]}^{r}
\times\big{[}F|(\displaystyle{\bigcup_{1\leq i<\theta}}Y_{e^{'}+i})\big{]}^{r,f^{'}}\subset S^{'}
\]
which is exactly what we wanted in (\ref{big2}). This finishes the proof of Proposition \ref{proposition1}.
\end{proof}

\begin{proposition}\label{proposition2}
Let $S\subset [F]^{r,s}$ be an e-superlarge set $(e<s)$. Suppose $\sigma_{1},\dots,\sigma_{p}$ are any finitely many sentences of type $\Sigma_{1}^{*}(iv)$ so that  for all $c_{ij}$ occurring in $\sigma$ we have $i\leq s$ and $j\leq r$. Let $\iota(\sigma_{1})=\dots=\iota(\sigma_{p})=e^{'}>e$. Then there is an $e^{'}$-superlarge set $S^{'}\subset S$ such that for any $\mathbf{a}\in S^{'}$, $M\models\sigma_{1}(\mathbf{a})\wedge\dots\wedge\sigma_{p}(\textbf{a})$.
\end{proposition}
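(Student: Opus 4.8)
The plan is to re-run the proof of Proposition~\ref{proposition1} essentially verbatim, but to carry all $p$ sentences through one single Erd\H{o}s--Rado homogenization at the same time. It is worth explaining first why the obvious iteration does not work. Proposition~\ref{proposition1} converts an $e$-superlarge set into an $e'$-superlarge set; to feed it the next sentence $\sigma_{2}$ (which again has index $e'$) one would need the output to be $e$-superlarge for some $e<e'$. But superlargeness is monotone \emph{upward} in the index: an $e$-superlarge set is also $(e+1)$-superlarge, by promoting a tail set $X_{e+i}$ of size $\geq\mu_{e+1}$ to the $(e+1)$st fixed column and leaving the remaining tail free. Moreover a finite intersection of $e'$-superlarge sets need not be $e'$-superlarge, since two winning strategies need not agree on their witnesses. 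Hence the sentences must be homogenized simultaneously.

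Set $S'=\{\mathbf{a}\in S\,|\,M\models\sigma_{1}(\mathbf{a})\wedge\dots\wedge\sigma_{p}(\mathbf{a})\}$ and let $\tau_{1},\dots,\tau_{p}$ together with $q_{1},\dots,q_{p}$ be the terms and indices furnished by item (iv) of $\Sigma_{1}^{*}$ for $\sigma_{1},\dots,\sigma_{p}$. The first part of the proof of Proposition~\ref{proposition1} is reproduced without change: fix a strategy of player I in $G(S',e')$, pull back player II's winning strategy in $G(S,e)$ to obtain $\gamma_{1}<\gamma_{2}<\cdots$ with sets $X_{i}$ and a tail $X_{e+i}$ satisfying (\ref{X})--(\ref{big}); copy the first $e$ moves by setting $\beta_{j}=\gamma_{j}$ for $1\leq j\leq e$; define the ordinals $k_{1},\dots,k_{d-1}$ and the moves $\beta_{e+1},\dots,\beta_{e'-1}$ as in (\ref{betagamma}); and fix $\pi_{*}=|M_{*}|$, the increasing sequence $\langle\pi_{i};i<\theta\rangle$, and the strictly increasing $g\colon\theta\to\theta$ exactly as before.

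The only genuinely new point is the coloring. For each $i<\theta$, each $\mathbf{a}$ in the domain $\prod_{i=1}^{e}[X_{i}]^{r}\times\prod_{i=1}^{d-1}[X_{k_{i}}]^{r}$, each $k\leq p$ and each $\alpha\in M_{*}\cup\{\star\}$, form the sets $P^{i,k}_{\mathbf{a},\alpha}$ out of $\tau_{k}$ exactly as $P^{i}_{\mathbf{a},\alpha}$ was formed out of $\tau$, and let $\mathcal{R}^{i}$ be the common refinement of all the resulting partitions over $k$ and over $\mathbf{a}$. The number $\chi$ of its classes now satisfies
\[
\chi\leq\left((|M_{*}|+1)^{p}\right)^{|M_{*}|}=2^{\pi_{*}}\leq\pi_{0},
\]
since $\pi_{*}$ is infinite, so the partition relation $\beth_{r-1}(\pi_{i})^{+}\to(\pi_{i}^{+})^{r}_{\chi}$ applies just as before and yields homogeneous sets $Z_{g(i)}\subset X_{g(i)}$ with $|Z_{g(i)}|=\pi_{i}^{+}$. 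Homogeneity now delivers, for each $i$, a tuple $G_{i}=(G_{i}^{1},\dots,G_{i}^{p})$ of functions into $M_{*}\cup\{\star\}$, one for each $\tau_{k}$. As the number of such tuples is at most $(|M_{*}|^{|M_{*}|})^{p}=2^{\pi_{*}}<\theta$ by the strong inaccessibility of $\theta$, there is a strictly increasing $h\colon\theta\to\theta$ making $G_{h(i)}$ independent of $i$, which now means that all $p$ components agree along $h$.

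Finally put $\beta_{e'+i}=\gamma_{g(h(i))}$ and define the sets $Y_{i}^{*}$ as in Proposition~\ref{proposition1}; they satisfy the analogues of (\ref{Y}), (\ref{subset2}), (\ref{theta2}) and the inclusion (\ref{Y*}) verbatim. For each index $k$ \emph{separately}, the constancy of $\tau_{k}^{\mathrm{left}}(\mathbf{a},\cdot)$ guaranteed by the homogenization gives, by the same computation that established (\ref{sigma*}), that $M\models\sigma_{k}^{*}(\mathbf{x})$ for every $\mathbf{x}$ in $\prod_{1\leq i\leq e'}[Y_{i}^{*}]^{r}\times[F|(\bigcup_{1\leq i<\theta}Y_{e'+i}^{*})]^{r,f'}$. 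Choosing $Y_{i}\in(Y_{i}^{*})^{\bullet\bullet}$ once (the same choice serving all $k$, since the $\bullet\bullet$-property depends only on $r$) and applying Lemma~\ref{combinatoriallemma} once for each $k$ to the equivalent constant sequences of $\sigma_{k}^{*}$ and $\sigma_{k}$, we obtain $M\models\sigma_{k}(\mathbf{x})$ for all $k$ on the final product. Hence that product lies in $S'$, so $S'$ is $e'$-superlarge. I expect no real obstacle beyond the elementary cardinal bookkeeping in the displayed inequality, which is immediate because $\pi_{*}$ is infinite and $\theta$ is a strong limit.
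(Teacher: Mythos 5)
Your proposal is correct and follows exactly the route the paper indicates: it reruns the proof of Proposition \ref{proposition1} while handling all $p$ sentences in a single simultaneous Erd\H{o}s--Rado homogenization (the common refinement of the $p$ partitions, with the class count still bounded by $2^{\pi_{*}}\leq\pi_{0}$). The paper leaves precisely these details to the reader, and your cardinal bookkeeping and the per-$k$ application of Lemma \ref{combinatoriallemma} fill them in correctly.
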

\begin{proof} The proof is almost the same as the proof of Proposition \ref{proposition1}. The only difference is that this time we must take into account all of $\sigma_1,\dots,\sigma_p$ simultaneously when we use the Erd\"{o}s-Rado partition theorem which can be done with no more difficulty, so we leave it to the reader.
\end{proof}

\begin{theorem}\label{partial consistency}
 $T+\Sigma_{1}$ is consistent.
\end{theorem}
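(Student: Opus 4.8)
The plan is to reduce the statement to a local interpretation problem in $M$ and then feed the finitely many nontrivial axioms into the iteration machinery of Proposition \ref{proposition2}. First I would apply Lemma \ref{sigmainfinite} with $\Gamma = T$ to replace $T + \Sigma_{1}$ by $T + \Sigma_{1}^{*}$, so that all constants now range over $C^{*} = \{c_{ij}\mid i,j<\omega\}$. Since $M\models T$ (expanded to a model of $T_{\mathrm{skolem}}$), by compactness it suffices to show that every finite $\Sigma_{1}'\subseteq \Sigma_{1}^{*}$ can be satisfied in $M$ under a suitable interpretation of the finitely many constants occurring in it. I would choose $r,s$ large enough that every $c_{ij}$ appearing in $\Sigma_{1}'$ has $i\le s$ and $j\le r$; then any $\mathbf{a}\in[F]^{r,s}$ furnishes a candidate interpretation $c_{ij}\mapsto a_{ij}$, and the task becomes finding one $\mathbf{a}$ making all axioms of $\Sigma_{1}'$ true.

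The key preliminary observation is that, for \emph{every} $\mathbf{a}\in[F]^{r,s}$, the axioms of types $\Sigma_{1}^{*}(\mathrm{i})$, $(\mathrm{ii})$ and $(\mathrm{iii})$ already hold. Indeed type (i) is a set of $\mathcal{L}^{S}$-sentences true in $M$; and the block structure of $[F]^{r,s}$, together with the defining property of $F$ that $F(b)>\max(F(a_{1}),\dots,F(a_{n}))$ forces $\tau(a_{1},\dots,a_{n})<b$, yields both the lexicographic ordering of the $a_{ij}$ (type (ii)) and the fact that a term in constants with row-indices $<i$ evaluates below $a_{ij}$ (type (iii)), since distinct rows lie in strictly increasing $F$-fibres while each row is a constant fibre. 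Thus only the type $(\mathrm{iv})$ axioms of $\Sigma_{1}'$, say finitely many sentences $\sigma_{1},\dots,\sigma_{N}$, remain to be arranged.

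To handle these I would iterate Proposition \ref{proposition2}. The starting point is that $[F]^{r,s}$ is itself $1$-superlarge: in the game $G([F]^{r,s},1)$ the final containment is automatic because the target set is all of $[F]^{r,s}$, so Player II need only realize the prescribed $F$-fibre sizes, which is possible once the elementary chain $M=\bigcup_{i<\theta}M_{i}$ is chosen, using the strong inaccessibility of $\theta$, so that $\sup_{\beta<\theta}|F^{-1}(\beta)|=\theta$ with fibres of unbounded cardinality appearing cofinally. Now group $\sigma_{1},\dots,\sigma_{N}$ according to the value of $\iota$ and list the distinct values as $2\le e_{1}<e_{2}<\dots<e_{m}\le s$ (recall $\iota(\sigma)=i_{n}\ge 2$ for any type (iv) sentence). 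Setting $e_{0}=1$ and $S_{0}=[F]^{r,s}$, at stage $k$ the set $S_{k-1}$ is $e_{k-1}$-superlarge and every sentence in the $k$-th group has $\iota=e_{k}>e_{k-1}$, so Proposition \ref{proposition2} yields an $e_{k}$-superlarge $S_{k}\subseteq S_{k-1}$ on which every sentence of that group holds; by nesting, all earlier groups continue to hold on $S_{k}$. After $m$ steps $S_{m}$ is $e_{m}$-superlarge, hence nonempty; choosing any $\mathbf{a}\in S_{m}$ and interpreting $c_{ij}:=a_{ij}$ makes $M$ a model of $\Sigma_{1}'$, completing the compactness argument.

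The genuinely hard combinatorics has already been discharged in Propositions \ref{proposition1} and \ref{proposition2}; in the present argument the only points requiring care are the base case, namely verifying that $[F]^{r,s}$ is $1$-superlarge, which hinges on arranging the elementary chain so the fibre-size condition $\sup\{|X_{e+i}|\}=\theta$ of the game can be met, and the bookkeeping that keeps the superlargeness level strictly below the next $\iota$-value at every step so that Proposition \ref{proposition2} applies. I expect this base case and the monotone organization of the $\iota$-values to be the main, though modest, obstacle.
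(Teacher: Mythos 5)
Your proposal is correct and follows essentially the same route as the paper: reduce to $\Sigma_{1}^{*}$ via Lemma \ref{sigmainfinite}, observe that every $\mathbf{a}\in[F]^{r,s}$ already satisfies the axioms of types (i)--(iii), and then iterate Proposition \ref{proposition2} along the increasing sequence of $\iota$-values starting from the $1$-superlarge set $[F]^{r,s}$ to handle the type (iv) axioms. The only difference is that you spell out why $[F]^{r,s}$ is $1$-superlarge (via arranging the elementary chain so that fibre sizes are cofinally unbounded in $\theta$), a point the paper treats as obvious.
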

\begin{proof}
It is enough to show that $T+\Sigma_{1}^{*}$ is consistent. Let $\Sigma_{1}^{'}$ be a finite part of $\Sigma_{1}^{*}$. Suppose $r,s$ are large enough positive integers such that for any $\sigma\in\Sigma_{1}^{'}$ and any $c_{ij}$ occurring in $\sigma$ we have $i\leq s$ and $j\leq r$. We also interpret naturally all symbols of $\mathcal{L}^{S}$ in $M$. So $M\models T_{\mathrm{Skolem}}$. Our aim is to find an $\textbf{a}\in[F]^{r,s}$ such that for each $\sigma\in\Sigma_{1}^{'}$, we have $M\models\sigma(\textbf{a})$. Therefore the compactness theorem will imply that $T+\Sigma_{1}^{*}$ is consistent. First suppose that $\sigma\in\Sigma_{1}^{'}$ is a sentence of type $\Sigma_{1}^{*}$(ii), by definition it is clear that for any $\textbf{a}\in[F]^{r,s}$ we have $a_{ij}<a_{kl}$ iff $(i,j)<(k,l)$ lexicographically, where $1\leq i,k\leq s$ and $1\leq j,l\leq r$. So for this type of $\sigma$, $M\models\sigma(\textbf{a})$. Now let $\sigma\in\Sigma_{1}^{'}$ is sentence of type $\Sigma_{1}^{*}$(iii). Consider any
\[
\textbf{a}=\langle \textbf{a}_{1},\dots,\textbf{a}_{s}\rangle=\big{\langle}\langle a_{11},\dots,a_{1r}\rangle,\ldots,
\langle a_{s1},\ldots,a_{sr}\rangle\big{\rangle}\in[F]^{r,s}
\]
and let $\tau(x_{1},\dots,x_{m})$ be the term appearing in $\sigma$. Recall that we had constructed $F\colon M\longrightarrow\theta$ in such a way that for any $\{a_{1},\dots,a_{m},b\}\subset M$:
\begin{center}
if $F(b)>\max(F(a_{1}),\dots,F(a_{m}))$, then $\tau(a_{1},\dots,a_{m})<b$.
\end{center}
This implies that $F(\textbf{a}_{1},\dots,\textbf{a}_{s})<a_{s1}$, since by the definition of $[F]^{r,s}$ we must have
\[
F(a_{s1})>F(a_{(s-1)r})=\dots=F(a_{(s-1)1})>\dots>F(a_{1r})=\dots=F(a_{11}).
\]
Finally assume that $B=\{\sigma_{1},\dots,\sigma_{p}\}$ is the set of all sentences of type $\Sigma_1$(iv) that has occurred in $\Sigma_{1}^{'}$. Set $A=\{\iota(\sigma_{1}),\dots,\iota(\sigma_{p})\}=\{e_{1},\dots,e_{q}\}$ such that $e_{1}<\dots<e_{q}$. Obviously $1<e_{1}$ and $e_{q}\leq s$ and $[F]^{r,s}$ is 1-superlarge. By a successive use of Proposition \ref{proposition2}, $q$ times, we can find subsets $S_{q}\subset\dots\subset S_{1}\subset[F]^{r,s}$ such that for $1\leq i\leq q$, every $S_{i}$ is $e_{i}$-superlarge and if $\textbf{a}\in S_{i}$, then $M\models\sigma(\textbf{a})$, where $\sigma\in B$ and $\iota(\sigma)=e_{i}$. Putting together all these, we have shown that for all $\textbf{a}\in S_{q}$ and all $\sigma\in\Sigma_{1}^{'}$ we have $M\models\sigma(\textbf{a})$. This completes the proof.
\end{proof}

\section{Second combinatorial proposition and proof of the second part of Theorem \ref{maintheorem}}

Keisler in \cite{keislerordering} introduced the following $\mathcal{L}^{S}(C)$-theory $\Sigma$:
\begin{definition}
Items $(i)$, $(ii)$ and $(iii)$ of $\Sigma$ are exactly the items $(i)$, $(ii)$ and $(iii)$ of $\Sigma_{1}$ and
\begin{itemize}
\item[(iv)] If $\tau(c_{i_{1}j_{1}},\dots,c_{i_{n}j_{n}})<c_{uv}$ where $\tau$ is a term of $\mathcal{L}^{S}$ and $u<i_{n}$ then
\begin{center}
$\tau(\overline{c},c_{i_{m+1}j_{m+1}},\dots,c_{i_{n}j_{n}})=\tau(\overline{c},c_{i_{m+1}l_{m+1}},\dots,c_{i_{n}l_{n}})$,
\end{center}
where $\overline{c}=\langle c_{i_{1}j_{1}},\dots,c_{i_{m}j_{m}}\rangle$ in which $m$ is the smallest integer such that $i_{m+1}>u$ and $u, v,l_{m+1}$$,\dots,l_{n}$ are arbitrary. If there is no such $m$, then the above equation becomes:
\begin{center}
$\tau(c_{i_{1}j_{1}},\dots,c_{i_{n}j_{n}})=\tau(c_{i_{1}l_{1}},\dots,c_{i_{n}l_{n}})$.
\end{center}
\end{itemize}
\end{definition}

We need to prove another combinatorial property of the superlarge sets, but we first note that $\Sigma$ is homogenous too. Now let $\Sigma^{*}$ be the $\mathcal{L}^{S}(C^{*})$-theory such that its sentences are exactly the sentences of $\Sigma$ except that this time the constants $c_{ij}$'s come from the set $C^{*}$. Again by homogeneouity, it is easy to see that

\begin{lemma}\label{sigmainfinite} For any $\mathcal{L}^{S}$-theory $\Gamma$, $\Gamma+\Sigma+\Sigma_{1}$ is consistent iff $\Gamma+\Sigma^{*}+\Sigma_{1}^{*}$ is consistent.
\end{lemma}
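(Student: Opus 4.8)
The plan is to reduce everything to finite fragments by compactness and then transport each fragment between the two constant index sets $C$ and $C^{*}$ by a bijective relabeling of the finitely many constants it mentions, using the homogeneity of both $\Sigma$ and $\Sigma_{1}$. Consistency is a finitary notion, and $\Gamma$ is an $\mathcal{L}^{S}$-theory mentioning no constant from $C$ or $C^{*}$, so it suffices to prove two things: every finite $\Delta^{*}\subseteq\Gamma+\Sigma^{*}+\Sigma_{1}^{*}$ can be carried to a finite subset of $\Gamma+\Sigma+\Sigma_{1}$ by relabeling its constants, and symmetrically in the other direction. Since relabeling constants preserves satisfiability (a model of the image reinterprets the relabeled constants), each fragment is consistent whenever its image is, and compactness then gives the equivalence.

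For the passage from $\Gamma+\Sigma+\Sigma_{1}$ to $\Gamma+\Sigma^{*}+\Sigma_{1}^{*}$, I would take a finite $\Delta^{*}$ of the latter, let $R\subset\omega$ be the finite set of row indices occurring among its $C^{*}$-constants, and (since $\eta=cf(\lambda)$ is a limit ordinal and $\mu_{i}\to\lambda$) pick $\xi<\eta$ with $\mu_{\xi}$ infinite, mapping each row $k\in R$ to $\xi+k\in\eta$ and each column order-preservingly into $\mu_{\xi+k}$. The point of the shift $k\mapsto\xi+k$ is that it preserves both the ordering and the successor relation, since $\xi+(k+1)=(\xi+k)+1$; thus consecutive rows go to consecutive rows. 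For the reverse passage, given a finite $\Delta\subseteq\Gamma+\Sigma+\Sigma_{1}$ with row set $R\subset\eta$, I would enumerate $R$ increasingly and define an order-preserving injection $R\to\omega$ that sends a pair of consecutive ordinals to a consecutive pair of naturals and inserts a gap otherwise, starting the base value at $2$; columns are mapped order-preservingly into $\omega$. Such a map exists for any finite $R$ and again preserves both order and the successor relation among the rows that occur.

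It then remains to verify that under either relabeling every axiom instance of $\Sigma$ (resp. $\Sigma_{1}$) is carried to an axiom instance of the corresponding theory. This is where homogeneity does most of the work: the defining conditions of axioms (ii)--(iv) of both theories are phrased solely in terms of the ordering and equality pattern of the row indices together with the lexicographic ordering of the constants, and these are all visibly preserved by an order- and equality-preserving relabeling; in particular axiom (ii) respects the lexicographic order, axiom (iii) respects each $i_{t}<i$, and the purely comparison-based axiom (iv) of $\Sigma$ respects the conditions $u<i_{n}$ and the minimality of $m$. The hard part will be axiom (iv) of $\Sigma_{1}$, whose hypothesis mentions the constant $c_{(i_{n}-1)j}$: for the relabeled sentence to remain an instance of (iv) one needs the image of $i_{n}-1$ to be the immediate predecessor of the image of $i_{n}$, which is exactly why both relabelings above were chosen to preserve the successor relation (and why I keep the image of each $i_{n}>1$ above $1$). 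Once this successor-preservation is secured, the remaining checks are routine, $\Gamma$ is left untouched, and both transports go through, completing the proof.
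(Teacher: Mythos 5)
Your proof is correct and is, in substance, the argument the paper intends: the paper disposes of this lemma with the single phrase ``by homogeneity, it is easy to see,'' and your compactness-plus-relabeling argument is the standard way to make that precise. The one point worth highlighting is that you are in fact more careful than the paper's stated homogeneity property: \emph{similarity} of constant sequences records only the equality pattern of the row indices, and that alone does not carry an instance of $\Sigma_{1}$(iv) to an instance of $\Sigma_{1}^{*}$(iv), because the guard constant $c_{(i_{n}-1)j}$ in the antecedent forces the image of $i_{n}-1$ to be the immediate predecessor of the image of $i_{n}$. Your insistence that both row relabelings preserve the successor relation (and keep the image of $i_{n}$ above $1$), together with the order-preserving treatment of columns for axiom (ii) and the observation that the conditions in $\Sigma$(iv) and $\Sigma_{1}$(iii) are purely order-theoretic, is exactly what is needed to make the transport of axiom instances go through in both directions.
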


Suppose $\sigma$ is a sentence of type $\Sigma^{*}$(iv), we extend the domain of the function $\iota$ to such $\sigma$ and define $\iota(\sigma)=i_{n}$.

\begin{proposition}\label{proposition3}
Let $S\subset[F]^{rs}$ be an $e$-superlarge set $(e\leq s)$. Suppose $\sigma$ is a sentence of type $\Sigma^{*}(iv)$ so that for all $c_{ij}$ occurring in $\sigma$ we have $i\leq s$ and $j\leq r$. Let $\iota(\sigma)=e^{'}\geq e$, then there is an $e^{'}$-superlarge set $S^{'}\subset S$ such that for any $\mathbf{a}\in S^{'}$, $M\models \sigma(\mathbf{a})$.
\end{proposition}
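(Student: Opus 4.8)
The plan is to follow the architecture of the proof of Proposition \ref{proposition1} almost verbatim, since the two propositions differ only in the \emph{shape} of the axiom scheme whose instances we are trying to force. In Proposition \ref{proposition1} the sentence $\sigma$ comes from $\Sigma_1^*(iv)$, whose equality identifies a term built on the \emph{top} block of constants with indices $i_n$ against its continuations to higher indices $u\ge i_n$; here $\sigma$ comes from $\Sigma^*(iv)$, whose equality identifies a term built on the \emph{same} top block of indices $i_n=e'$ but with the last several index-slots replaced by arbitrary $l_{m+1},\dots,l_n$ at those same levels, under the hypothesis $\tau(\overline c,\dots)<c_{uv}$ for some $u<i_n$. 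So I would again set $S'=\{\mathbf a\in S\mid M\models\sigma(\mathbf a)\}$ and prove it is $e'$-superlarge by exhibiting a winning strategy for player II in $G(S',e')$ that piggybacks on player II's winning strategy in $G(S,e)$.

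The concrete steps: first, have player II copy the $G(S,e)$ strategy for the first $e$ moves, setting $\beta_j=\gamma_j$ and extracting the sets $X_1,\dots,X_e$ together with the $\theta$-sequence $X_{e+i}$ satisfying (\ref{theta}) and (\ref{big}). Next, fill the intermediate coordinates $e<j<e'$ by choosing $\beta_{e+j}=\gamma_{k_j}$ exactly as in (\ref{betagamma}), selecting among the $X_{e+i}$ an increasing cofinal subfamily of large enough cardinality. The heart of the argument is the last move: set $M_*=M_{\beta_{e'-1}}$, pick a strictly increasing sequence $\langle\pi_i;i<\theta\rangle$ of cardinals below $\theta$ with $\pi_0\ge\max\{2^{\pi_*},\mu_{e'}\}$, and use the strong inaccessibility of $\theta$ with (\ref{theta}) to define $g\colon\theta\to\theta$ hitting blocks $X_{g(i)}$ with $|X_{g(i)}|\ge(\beth_{r-1}(\pi_i))^+$. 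The key modification is \emph{which} partition I color by: for each $i$ and each $\mathbf a$ ranging over $\prod[X_i]^r\times\prod[X_{k_i}]^r$, I partition $[X_{g(i)}]^r$ according to the value $\tau(\mathbf a,\mathbf x)$, lumping together into one class the tuples with $\tau(\mathbf a,\mathbf x)>M_*$ via a symbol $\star$. Apply Erd\H os--Rado $\beth_{r-1}(\pi_i)^+\to(\pi_i^+)^r_\chi$ (with $\chi\le 2^{\pi_*}\le\pi_0$) to get homogeneous $Z_{g(i)}\subset X_{g(i)}$ of size $\pi_i^+$, obtain the stabilizing functions $G_i$, thin out by a further increasing $h\colon\theta\to\theta$ so that $G_{h(i)}=G_{h(j)}$ as in (\ref{G}), and finally declare $\beta_{e'+i}=\gamma_{g(h(i))}$.

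I would then verify the winning condition exactly as before: set $Y_i^*=X_i$ for $i\le e$, $Y_i^*=X_{k_{i-e}}$ for $e<i<e'$, and $Y^*_{e'+i}=Z_{g(h(i))}$, check that they satisfy (\ref{Y}), (\ref{subset2}), (\ref{theta2}) and that their product lies in $S$ by the same nesting computation $(\clubsuit)$ that reindexes the blocks into (\ref{big}). The one genuinely new bookkeeping point is the combinatorial manipulation of the homogeneity to force the $\Sigma^*(iv)$ equation rather than the $\Sigma_1^*(iv)$ equation. Here I would isolate the split of the constant tuple at the index $m$ (the smallest integer with $i_{m+1}>u$), define $\tau^{\mathrm{left}}$ and $\tau^{\mathrm{right}}$ accordingly, and use the homogeneity of $Z_{g(h(i))}$ together with the hypothesis $\tau(\mathbf a,\mathbf x)<c_{uv}$ (so the $\star$-alternative is excluded) to conclude that the relevant left and right evaluations agree. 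Passing to $Y_i\in(Y_i^*)^{\bullet\bullet}$ and invoking Lemma \ref{combinatoriallemma} upgrades the forced equation from the fixed index-pattern to the full equivalence class, giving $M\models\sigma(\mathbf x)$ on the whole product and hence the inclusion in $S'$.

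The main obstacle I anticipate is precisely this last homogeneity step: in $\Sigma^*(iv)$ the varying indices $l_{m+1},\dots,l_n$ sit at the \emph{same} level $e'=i_n$ as the fixed block, not at a strictly higher level $u>i_n$ as in $\Sigma_1^*(iv)$. I must therefore arrange that the single Erd\H os--Rado-homogeneous block $Z_{g(h(0))}$ absorbing coordinate $e'$ already controls all the index-variants at that level, so that after the $(\,\cdot\,)^{\bullet\bullet}$-thinning and Lemma \ref{combinatoriallemma} the equivalence of the two relevant constant-sequences genuinely yields the desired identity. Getting the split point $m$ and the $\tau^{\mathrm{left}}/\tau^{\mathrm{right}}$ decomposition to match the equivalence hypothesis of Lemma \ref{combinatoriallemma}, rather than merely the weaker similarity, is where the care is needed; the partition-theoretic and game-theoretic scaffolding is otherwise identical to Proposition \ref{proposition1}.
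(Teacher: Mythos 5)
There is a genuine gap: you have transplanted the Proposition \ref{proposition1} machinery without adapting \emph{where} the partition argument lives, and this fails for general sentences of type $\Sigma^{*}$(iv). Your closing paragraph misreads the axiom: the varying constants $c_{i_{m+1}l_{m+1}},\dots,c_{i_{n}l_{n}}$ do not all sit at the top level $i_{n}=e'$; they occupy the whole range of levels $i_{m+1},\dots,i_{n}$, where $m$ is the smallest integer with $i_{m+1}>u$ and $u<i_{n}$ is \emph{arbitrary}. Setting $p=i_{m+1}-1$, the constants that must be allowed to vary live in the blocks at positions $p+1,\dots,e'$, and only the blocks at positions $1,\dots,p$ may serve as parameters. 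Your colouring --- a partition of the single top block $[X_{g(i)}]^{r}$ by the value of $\tau(\mathbf a,\mathbf x)$ with $\mathbf a$ ranging over the first $e'-1$ blocks --- therefore freezes exactly the coordinates ($p+1$ through $e'-1$) that the equation of $\Sigma^{*}$(iv) requires you to move. Homogenizing the top block alone suffices only in the special case $p=e'-1$. In general one must homogenize the \emph{product} of all blocks in positions $p+1,\dots,e'$ simultaneously against parameters from positions $1,\dots,p$, which is why the paper invokes the polarized Erd\H{o}s--Rado theorem (Corollary \ref{usefulcorollary}) rather than the plain relation $\beth_{r-1}(\pi_i)^{+}\rightarrow(\pi_i^{+})^{r}_{\chi}$ of Theorem \ref{erdosradop}, and why $M_{*}$ is taken at level $\gamma_{p}$ (so that $h_{uv}\in M_{*}$ kills the $\star$-alternative) rather than at level $\beta_{e'-1}$.

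Two further consequences of the correct setup are absent from your plan. First, some of the blocks to be homogenized sit at \emph{fixed} positions $\leq e$ or between $e$ and $e'$, i.e.\ positions where player II must deliver a set of the exact cardinality $\mu_{i}$ demanded by player I; to leave room for the polarized partition, player II must secretly play inflated cardinals $\kappa_{1},\dots,\kappa_{d}$ (with $\kappa_{i}>\beth_{r-1}(\lambda_{i})$, $\lambda_{i+1}\geq 2^{\kappa_{i}}$) in the game $G(S,e)$ and then cut the homogeneous pieces back down to size $\mu_{p+i}$. This manoeuvre has no analogue in Proposition \ref{proposition1}, where the homogenized blocks all come from the unbounded tail. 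Second, the hypothesis here is $e'\geq e$, so the case $e'=e$ must be handled; your scaffolding, built around ``the last move'' creating new fixed positions beyond $e$, does not apply there at all. The paper's proof is accordingly organized as a case analysis on the relative positions of $p$, $e$ and $e'$ (the subcases $e=p$, $p<e=e'$, $p<e<e'$, and $e<p$), each with its own reindexing; your proposal collapses all of these into the single configuration of Proposition \ref{proposition1} and would not go through as written. The final step --- excluding $\star$ via $\tau^{\mathrm{left}}(\mathbf h)<h_{uv}\in M_{*}$ and upgrading via Lemma \ref{combinatoriallemma} using equivalence of the two constant sequences --- is correctly identified, but it only functions once the partition is placed over the right product of blocks.
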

\begin{proof}
First suppose that $\tau^{\mathrm{left}}$ and $\tau^{\mathrm{right}}$ are the terms occurring in the left and the right sides of the conclusion part of the sentence $\sigma$, respectively.  More precisely:
\begin{center}
$\tau^{\mathrm{left}}=\tau(\overline{c},c_{i_{m+1}j_{m+1}},\dots,c_{i_{n}j_{n}})$, \,\,\,
$\tau^{\mathrm{right}}=\tau(\overline{c},c_{i_{m+1}l_{m+1}},\dots,c_{i_{n}l_{n}})$
\end{center}
with $\overline{c}=\langle c_{i_{1}j_{1}},\dots,c_{i_{m}j_{m}}\rangle$.
 Assume that
\[
S^{'}=\{\textbf{a}\in S \, | \, M\models\sigma(\textbf{a})\}.
\]
We will show that $S^{'}$ is $e^{'}$-superlarge. This will be done if we can show that there is a winning strategy
\[
\beta_{1}(\mu_{1}),\dots,\beta_{e^{'}}(\mu_{1},\dots,\mu_{e^{'}}),\dots,\beta_{e^{'}+i}(\mu_{1},\dots,\mu_{e^{'}}),\dots\,\,\,i<\theta
\]
for the player II in the game $G(S^{'},e^{'})$. Suppose the player I plays according to the following strategy:
\[
\mu_{1},\mu_{2},\dots,\mu_{e^{'}}.
\]
Since $S$ is $e$-superlarge, then the player II has a winning strategy:
\[
\gamma_{1},\dots,\gamma_{e},\dots,\gamma_{e+i},\dots\,\,\, i<\theta
\]
for the game $G(S,e)$. Put $i_{m+1}-1=p$ (if there is no $m$ such that $i_{m+1}>u$, then put $p=i_{1}-1$ and note that $i_{1}>u\geq 1$). There are several cases to be considered. Case I: $e\geq p$. Case II: $e<p$.\\

\underline{Case I}: ($e\geq p$)\\

First recall the definition of the elementary end extension chain of initial submodels $\langle M_{i};i<\theta\rangle$ from the previous section. For simplicity we denote $M_{\gamma_{p}}$ by $M_{*}$ and set $|M_{*}|=\chi$. Assume that $\star$ is a new symbol different from any element of $M$. In this case we face with three subcases: Subcase (Ia): $e=p$. Subcase (Ib): $p<e=e^{'}$. Subcase (Ic): $p<e<e^{'}$.\\

\underline{\underline{Subcase (Ia)}}: ($e=p$)\\

Let $e^{'}-p=d$ where $d>0$. Suppose the following are the ordinals given by the wining strategy of the player II against the above mentioned strategy of player I in the game $G(S,e)$:
\[
\gamma_{1}(\mu_{1}),\dots,\gamma_{e}(\mu_{1},\dots,\mu_{e}),\dots,\gamma_{e+i}(\mu_{1},\dots,\mu_{e}),\dots\,\,\, i<\theta
\]
This implies that $\gamma_{1}<\gamma_{2}<\dots<\gamma_{i}<\dots$ for $i<\theta$ and there exist sets:
\[
X_{1}\in[F^{-1}(\gamma_{1})]^{\mu_{1}},\dots,X_{e}\in[F^{-1}(\gamma_{e})]^{\mu_{e}}
\]
as well as the following sets:
\[
X_{e+i}\subset F^{-1}(\gamma_{e+i})\,\,\,\mathrm{for}\,\,\,1\leq i<\theta
\]
such that
\[
\sup\bigr\{|X_{e+i}|;i<\theta\bigr\}=\theta
\]
and
\[
\displaystyle{\prod_{i=1}^{e}}\big{[}X_{i}\big{]}^{r}\times\big{[}F|(\bigcup_{1\leq i<\theta}X_{e+i})\big{]}^{r,f}\subset S,
\]
where $f=s-e$. Now we move towards defining $\beta$'s which guarantee the winning of the player II in the game $G(S^{'},e^{'})$.
Let
\[
\beta_{j}(\mu_{1},\dots,\mu_{j})=\gamma_{j}(\mu_{1},\dots,\mu_{j})\,\,\,\mathrm{for}\,\,\,1\leq j\leq p.
\]
Suppose $\mu_{p+1}$ is given. Put $\lambda_{1}=\max (\mu_{p+1},2^{\chi})$. Let $\kappa_{1}$ be a cardinal with $\theta>\kappa_{1}>\beth_{r-1}(\lambda_{1})$ and $\delta_{1}$ is the least ordinal such that $|X_{e+\delta_{1}}|\geq\kappa_{1}$. Now set
\[
\beta_{p+1}(\mu_{1},\dots,\mu_{p+1})=\gamma_{e+\delta_{1}}(\mu_{1},\dots,\mu_{e}).
\]
If $d=1$, then this completes the description of the strategy of the player II in the game $G(S^{'},e^{'})$. If $d>1$, then for $1< i\leq d$ suppose we have defined $\beta_{p+1},\dots,\beta_{p+(i-1)}$ and $\mu_{p+i}$ is given. Set $\lambda_{i}=\max (2^{\kappa_{i-1}},\mu_{p+i})$ and let $\kappa_{i}$ be any cardinal $>\beth_{r-1}(\lambda_{i})$ and $<\theta$. Suppose $\delta_{i}$ is the least ordinal $<\theta$ and $>\delta_{i-1}$ such that $|X_{e+\delta_{i}}|\geq\kappa_{i}$. Now we define
\[
\beta_{p+i}(\mu_{1},\dots,\mu_{p+i})=\gamma_{e+\delta_{i}}(\mu_{1},\dots,\mu_{e}).
\]
So far we have defined $\beta_{1},\dots,\beta_{e^{'}}$. For $1<i<\theta$ let
\[
\beta_{e^{'}+i}(\mu_{1},\dots,\mu_{e^{'}})=\gamma_{e+\delta_{d}+i}(\mu_{1},\dots,\mu_{e}).
\]
This completes the description of the strategy of the player II in the game $G(S^{'},e^{'})$. It remains to show that it is a winning strategy. We should find subsets $Y_{i}\in[F^{-1}(\beta_{i})]^{\mu_{i}}$ for $1\leq i\leq e^{'}$ as well as subsets $Y_{e^{'}+i}\subset F^{-1}(\beta_{e^{'}+i})$ for $i<\theta$ such that sup$\{|Y_{e^{'}+i}|; i<\theta\}=\theta$ and
\begin{equation}\label{hadaf1}
\displaystyle{\prod_{i=1}^{e^{'}}}\big{[}Y_{i}\big{]}^{r}\times\big{[}F|(\bigcup_{1\leq i<\theta}Y_{e^{'}+i})\big{]}^{r,f^{'}}\subset S^{'},
\end{equation}
where $s-e^{'}=f^{'}$. By Corollary \ref{usefulcorollary} of the polarized Erd\"{o}s-Rado partition theorem we have:
\begin{equation}\label{partitionia}
(\kappa_{1},\dots,\kappa_{d})\longrightarrow(\mu_{p+1}^{+},\dots,\mu_{e^{'}}^{+})^{r}_{2^{\chi}}.
\end{equation}
Now we shall introduce a partition relation $\mathcal{R}$ on the set
\[
[X_{e+\delta_{1}}]^{r}\times\dots\times[X_{e+\delta_{d}}]^{r}.
\]
Assume that $\star$ is a new symbol different from any element of $M$. Now for any $\alpha\in M_{*}\cup\{\star\}$ and
any $\mathbf{a}$ in $[X_{1}]^{r}\times\dots\times[X_{p}]^{r}$ let
\[
P_{\alpha,\mathbf{a}}=\bigr\{\mathbf{x}\in\big{[}X_{e+\delta_1}\big{]}^{r}\times\dots\times\big{[}X_{e+\delta_d}\big{]}^{r}
:\tau^{\mathrm{left}}(\mathbf{a},\mathbf{x})=\alpha\bigr\},
\]
where $\tau^{\mathrm{left}}(\mathbf{a},\mathbf{x})=\star$ is an abbreviation for $\tau^{\mathrm{left}}(\mathbf{a},\mathbf{x})>M_{*}$. It is evident that fixing $\textbf{a}$ as above, the set $\bigr\{P_{\textbf{a},\alpha}|\alpha\in M_{*}\cup\{\star\}\bigr\}$ becomes a partition of
\[
[X_{e+\delta_{1}}]^{r}\times\dots\times[X_{e+\delta_{d}}]^{r}.
\]
We denote the partition relation by $\mathcal{R}_{\mathbf{a}}$. Now we are ready to define $\mathcal{R}$:
\begin{center}
$\mathbf{x}_1\mathcal{R}\mathbf{x}_2$\,\,\,iff\,\,\, $\forall\,\mathbf{a}\in\displaystyle{\prod_{i=1}^{p}}[X_{i}]^{r}:\mathbf{x}_1\mathcal{R}_{\mathbf{a}}\mathbf{x}_2$
\end{center}
It is easy to see that the number of partition classes is at most $\chi^{\chi}=2^{\chi}$. Hence by (\ref{partitionia}), there are subsets $Z_{i}\subset X_{e+\delta_{i}}$ for $1\leq i\leq d$ such that $|Z_{i}|=\mu_{p+i}$ and the set
\[
[Z_{1}]^{r}\times\dots\times[Z_{d}]^{r}
\]
lies in one partition class. Now suppose for $1\leq i\leq p$: $Y_{i}^{*}=X_{i}$, for $1\leq i\leq d$: $Y_{e+i}^{*}=Z_{i}\,$ and for $1\leq i<\theta$: $Y_{e^{'}+i}=X_{e+\delta_{d}+i}$. Finally for $1\leq i\leq e^{'}$ let $Y_{i}$ be any member of $(Y_{i}^{*})^{\bullet\bullet}$ in the sense of Fact \ref{fact}. Now we can deduce that
\[
\forall\,\mathbf{a}\in\displaystyle{\prod_{i=1}^{p}}\big{[}Y_{i}^{*}\big{]}^{r}
\]
\underline{either}
\begin{equation}\label{yek}
\forall\,\mathbf{x}\in\displaystyle{\prod_{i=1}^{d}}\big{[}Y_{e+i}^{*}\big{]}^{r}\,\,\tau^{\mathrm{left}}(\mathbf{a},\mathbf{x})>M_{*},
\end{equation}
\underline{or} there exists $\alpha\in M_{*}$ such that
\begin{equation}\label{do}
\forall\,\mathbf{x}\in\displaystyle{\prod_{i=1}^{d}}\big{[}Y_{e+i}^{*}\big{]}^{r}\,\,\tau^{\mathrm{left}}(\mathbf{a},\mathbf{x})=\alpha.
\end{equation}
Now we move towards proving the required properties of $Y_i$. Of course for $1\leq i\leq e$:
\[
Y_{i}^{*}=X_{i}\in[F^{-1}(\gamma_{i})]^{\mu_{i}}=[F^{-1}(\beta_{i})]^{\mu_{i}},
\]
thus $Y_{i}\in[F^{-1}(\beta_{i})]^{\mu_{i}}$. Also for $1\leq i\leq d$ we have
\[
Y_{e+i}^{*}=Z_{i}\subset X_{e+\delta_{i}}\in[F^{-1}(\gamma_{e+\delta_{i}})]^{\kappa_{i}}
\]
and $|Z_{i}|=\mu_{e+i}$, hence $Y_{e+i}^{*}\in[F^{-1}(\beta_{e+i})]^{\mu_{p+i}}$ and
$Y_{e+i}\in[F^{-1}(\beta_{e+i})]^{\mu_{p+i}}$. For the rest we have:
\[
Y_{e^{'}+i}=X_{e+\delta_{d}+i}\subset F^{-1}(\gamma_{e+\delta_{d}+i})=F^{-1}(\beta_{e^{'}+i}),
\]
for $1\leq i <\theta$. Note also that
\begin{eqnarray*}
\theta &=& \sup\bigr\{|X_{e+i}|;i<\theta\bigr\}\\
       &=& \sup\bigr\{|X_{e+\delta_{d}+i}|;i<\theta\bigr\}\\
       &=& \sup\bigr\{|Y_{e^{'}+i}|;i<\theta\bigr\}.
\end{eqnarray*}
It remains to show that the inclusion (\ref{hadaf1}) holds. We first show that
\begin{equation}\label{hadaf2}
\displaystyle{\prod_{i=1}^{e^{'}}}\big{[}Y_{i}\big{]}^{r}\times\big{[}F|(\bigcup_{1\leq i<\theta}Y_{e^{'}+i})\big{]}^{r,f^{'}}\subset S.
\end{equation}
Obviously
\begin{equation}\label{bir}
\displaystyle{\prod_{i=1}^{e}}\big{[}Y_{i}\big{]}^{r}\times\displaystyle{\prod_{i=1}^{d}}\big{[}Y_{e+i}\big{]}^{r}=
\displaystyle{\prod_{i=1}^{e^{'}}}\big{[}Y_{i}\big{]}^{r},\,\,\,
\displaystyle{\prod_{i=1}^{e}}\big{[}Y_{i}\big{]}^{r}=\displaystyle{\prod_{i=1}^{e}}\big{[}X_{i}\big{]}^{r}
\end{equation}
as well as
\begin{equation}\label{iki}
\displaystyle{\prod_{i=1}^{d}}\big{[}Y_{e+i}\big{]}^{r}\times\big{[}F|(\bigcup_{1\leq i<\theta}Y_{e^{'}\!+i})\big{]}^{r,f^{'}}\subset
\big{[}F|(\bigcup_{1\leq i<\theta}Y_{e+i})\big{]}^{r,f^{'}\!\!+d}.
\end{equation}
Observe that $f^{'}+d=f^{'}+(e^{'}-e)=(f^{'}+e^{'})-e=s-e=f$. Since for every $1\leq i<\theta$ there is $1\leq j<\theta$ such that $Y_{e+i}\subset X_{e+j}$, then
\begin{equation}\label{uch}
\big{[}F|(\bigcup_{1\leq i<\theta}Y_{e+i})\big{]}^{r,f}\subset\big{[}F|(\bigcup_{1\leq i<\theta}X_{e+i})\big{]}^{r,f}.
\end{equation}
Therefore by (\ref{bir}),(\ref{iki}) and (\ref{uch}) we conclude that
\[
\displaystyle{\prod_{i=1}^{e^{'}}}\big{[}Y_{i}\big{]}^{r}\times\big{[}F|(\bigcup_{1\leq i<\theta}Y_{e^{'}+i})\big{]}^{r,f^{'}}\subset
\displaystyle{\prod_{i=1}^{e}}\big{[}X_{i}\big{]}^{r}\times\big{[}F|(\bigcup_{1\leq i<\theta}X_{e+i})\big{]}^{r,f}\subset S
\]
which proves (\ref{hadaf2}). In order to establish (\ref{hadaf1}) it suffices to show (recall the definition of $S^{'}$):
\begin{equation}\label{birbir}
\forall\,\mathbf{x}\in\displaystyle{\prod_{i=1}^{e^{'}}}\big{[}Y_{i}\big{]}^{r}\times\big{[}F|(\bigcup_{1\leq i<\theta}Y_{e^{'}+i})\big{]}^{r,f^{'}}\,\,\, M\models\sigma(\mathbf{x}).
\end{equation}
The maximum first index $i$ in the constants $c_{ij}$ occurring in $\sigma$ is $\iota(\sigma)=i_n=e^{'}$, thus it is enough to consider only that part of $\mathbf{x}$ which comes from $[Y_{1}]^{r}\times\dots\times[Y_{e^{'}}]^{r}$.
In other words it is enough to show
\begin{equation}\label{ikiiki}
\forall\,\mathbf{x}\in\displaystyle{\prod_{i=1}^{e^{'}}}\big{[}Y_{i}\big{]}^{r}\,\,\,M\models\sigma(\mathbf{x}).
\end{equation}
Let $\mathbf{h}$ be an element of $\langle\mathbf{h}_1,\dots,\mathbf{h}_{e^{'}}\rangle\in[Y_1]^{r}\times\dots\times[Y_{e^{'}}]^{r}$. Let $\mathbf{a}=\langle\mathbf{h}_1,\dots,\mathbf{h}_p\rangle$, $\mathbf{b}=\langle\mathbf{h}_{p+1},\dots,\mathbf{h}_{e^{'}}\rangle$. Also for $1\leq i<e^{'}$, set $\mathbf{h}_i=\langle h_{i1},\dots,h_{ir}\rangle$. If $\tau(h_{i_1j_1},\dots,h_{i_nj_n})\leq h_{uv}$, then obviously $M\models\sigma(\mathbf{h})$. So suppose
$\tau(h_{i_1j_1},\dots,h_{i_nj_n})>h_{uv}$.
Then (\ref{ikiiki}) is reduced to
\begin{equation}\label{uchuch}
\tau^{\mathrm{left}}(\mathbf{h})=\tau^{\mathrm{right}}(\mathbf{h}).
\end{equation}
Recall that $u<i_{m+1}$, so $u\leq i_{m+1}-1=p$, then by $e=p$, we have $u\leq e$. This implies that $Y_{u}=X_{u}\subset F^{-1}(\gamma_{u})\subset M_{\gamma_p}=M_*$ and consequently $h_{uv}\in Y_u$ is a member of $M_*$. Since we have assumed that $\tau^{\mathrm{left}}(\mathbf{h})<h_{uv}$, it follows that $\tau^{\mathrm{left}}(\mathbf{h})\in M_*$. This will eliminate the possibility (\ref{yek}). Hence (\ref{do}) occurs. Thus there is an $\alpha\in M_{\star}$ such that
\begin{equation}\label{taazeh}
\forall\,\mathbf{y}\in\displaystyle{\prod_{i=1}^{d}}\big{[}Y_{e+i}^{*}\big{]}^{r}\,\,\tau^{\mathrm{left}}(\mathbf{a},\mathbf{y})=\alpha.
\end{equation}
Now suppose $\sigma_1,\sigma_2$ are the following two sentences:
\[
\sigma_1:\,\,\,\tau(\underline{h},c_{i_{m+1}j_{m+1}},\dots,c_{i_nj_n})=\alpha,
\]
\[
\sigma_2:\,\,\,\tau(\underline{h},c_{i_{m+1}l_{m+1}},\dots,c_{i_nl_n})=\alpha,
\]
where $\underline{h}=\langle h_{i_1j_1},\dots,h_{i_mj_m}\rangle$. From (\ref{taazeh}), it follows that
\begin{equation}\label{yekyek}
\forall\,\mathbf{y}\in\displaystyle{\prod_{i=p+1}^{e^{'}}}\big{[}Y_i^*\big{]}^r\,\,\,M\models\sigma_1(\mathbf{a},\mathbf{y}).
\end{equation}
But the two sequences $\langle c_{i_{m+1}j_{m+1}},\dots,c_{i_nj_n}\rangle$, $\langle c_{i_{m+1}l_{m+1}},\dots,c_{i_nl_n}\rangle$
are equivalent and hence Lemma \ref{combinatoriallemma} would imply
\begin{equation}\label{dodo}
\forall\,\mathbf{y}\in\displaystyle{\prod_{i=p+1}^{e^{'}}}\big{[}Y_i\big{]}^r\,\,\,M\models\sigma_2(\mathbf{a},\mathbf{y}).
\end{equation}
Putting (\ref{yekyek}) and (\ref{dodo}) together we obtain
\[
\forall\,\mathbf{y}\in\displaystyle{\prod_{i=p+1}^{e^{'}}}\big{[}Y_i\big{]}^r\,\,\,
\tau^{\mathrm{left}}(\mathbf{a},\mathbf{y})=\tau^{\mathrm{right}}(\mathbf{a},\mathbf{y}),
\]
which implies that $\tau^{\mathrm{left}}(\mathbf{a},\mathbf{b})=\tau^{\mathrm{right}}(\mathbf{a},\mathbf{b})$ and consequently $\tau^{\mathrm{left}}(\mathbf{h})=\tau^{\mathrm{right}}(\mathbf{h})$. This confirms (\ref{uchuch}) and finishes the proof of Subcase (Ia).\\

\underline{\underline{Subcase (Ib)}}: ($p<e=e^{'}$)\\

Let $e^{'}=e=p+d$, where $d>0$. We inductively define cardinals $\kappa_i,\lambda_i$ for $1\leq i\leq d$. If $d=1$, put $\lambda_1=\max (\mu_{p+1},2^{\chi})$ and $\kappa_1>\beth_{r-1}(\lambda_1)$. If $d>1$, then proceed as follows: for $2\leq i\leq d$ set $\lambda_i=\max (\kappa_{i-1},\mu_{p+i})$ and $\beth_{r-1}(\lambda_i)<\kappa_i<\theta$. Then by Corollary \ref{usefulcorollary} we have:
\begin{equation}\label{partitionib}
(\kappa_1,\dots,\kappa_d)\longrightarrow(\mu_{p+1},\dots,\mu_{e^{'}})^{r}_{2^{\chi}}.
\end{equation}
Now consider the following strategy of the player I in the game $G(S,e)$:
\[
\mu_1,\dots,\mu_p,\kappa_1,\dots,\kappa_d.
\]
Let the following be the ordinals given via the winning strategy of the player II for the game $G(S,e)$:
\begin{center}
$\gamma_1(\mu_1),\dots,\gamma_p(\mu_1,\dots,\mu_p)$,$\gamma_{p+1}(\mu_1,\dots,\mu_p,\kappa_1),\dots,\gamma_e(\mu_1,\dots,\mu_p,\kappa_1,\dots,\kappa_d)$,
\end{center}
\begin{center}

$\dots,\gamma_{e+i}(\mu_1,\dots,\mu_p,\kappa_1,\dots,\kappa_d),\dots$\,\,\, for $i<\theta$.
\end{center}
It follows that $\gamma_{1}<\gamma_{2}<\dots<\gamma_{i}<\dots$ for $i<\theta$ and there exist sets:
\begin{center}
$X_{1}\in[F^{-1}(\gamma_{1})]^{\mu_{1}},\dots,X_{p}\in[F^{-1}(\gamma_{p})]^{\mu_{p}}$,
\end{center}
\begin{center}
$X_{p+1}\in[F^{-1}(\gamma_{p+1})]^{\kappa_{1}},\dots,X_{e}\in[F^{-1}(\gamma_{e})]^{\kappa_{d}}$
\end{center}
as well as the sets:
\[
X_{e+i}\subset F^{-1}(\gamma_{e+i})\,\,\,\mathrm{for}\,\,\,1\leq i<\theta
\]
such that
\[
\sup\bigr\{|X_{e+i}|;i<\theta\bigr\}=\theta
\]
and
\begin{equation}\label{tak}
\displaystyle{\prod_{i=1}^{e}}\big{[}X_{i}\big{]}^{r}\times\big{[}F|(\bigcup_{1\leq i<\theta}X_{e+i})\big{]}^{r,f}\subset S,
\end{equation}
where $f=s-e$. Now we define $\beta_i$ which ensure that the player II wins the game $G(S^{'},e^{'})$.
Let
\begin{eqnarray*}
&&\beta_{i}(\mu_{1},\dots,\mu_{i}) = \gamma_{i}(\mu_{1},\dots,\mu_{i})\,\,\,\mathrm{for}\,\,\,1\leq i\leq p,\\
&&\beta_{p+i}(\mu_{1},\dots,\mu_{p+i}) = \gamma_{p+i}(\mu_{1},\dots,\mu_{p},\kappa_1,\dots,\kappa_i)\,\,\,\mathrm{for}\,\,\,1\leq i\leq d,\\
&&\beta_{e^{'}+i}(\mu_{1},\dots,\mu_{e^{'}}) = \gamma_{e+i}(\mu_{1},\dots,\mu_{p},\kappa_1,\dots,\kappa_d)\,\,\,\mathrm{for}\,\,\,1\leq i<\theta.
\end{eqnarray*}
Having completed the description of the strategy of the player II for the game $G(S^{'},e^{'})$, we shall show that it is a winning strategy. We would find subsets $Y_{i}\in[F^{-1}(\beta_{i})]^{\mu_{i}}$ for $1\leq i\leq e^{'}$ as well as subsets $Y_{e^{'}+i}\subset F^{-1}(\beta_{e^{'}+i})$ for $i<\theta$ such that sup$\{|Y_{e^{'}+i}|; i<\theta\}=\theta$ and
\begin{equation}\label{hadaf1ib}
\displaystyle{\prod_{i=1}^{e^{'}}}\big{[}Y_{i}\big{]}^{r}\times\big{[}F|(\bigcup_{1\leq i<\theta}Y_{e^{'}+i})\big{]}^{r,f^{'}}\subset S^{'},
\end{equation}
where $s-e^{'}=f^{'}$. Now we shall introduce a partition relation $\mathcal{R}$ on the set
\[
[X_{p+1}]^{r}\times\dots\times[X_{p+d}]^{r}.
\]
For any $\alpha\in M_{*}\cup\{\star\}$ and
any $\mathbf{a}$ in $[X_{1}]^{r}\times\dots\times[X_{p}]^{r}$ let
\[
P_{\alpha,\mathbf{a}}=\bigr\{\mathbf{x}\in\big{[}X_{p+1}\big{]}^{r}\times\dots\times\big{[}X_{p+d}\big{]}^{r}
:\tau^{\mathrm{left}}(\mathbf{a},\mathbf{x})=\alpha\bigr\},
\]
where $\tau^{\mathrm{left}}(\mathbf{a},\mathbf{x})=\star$ is an abbreviation for $\tau^{\mathrm{left}}(\mathbf{a},\mathbf{x})>M_{*}$. For every $\textbf{a}$ as above, the set $\bigr\{P_{\textbf{a},\alpha}|\alpha\in M_{*}\cup\{\star\}\bigr\}$ is a partition of
\[
[X_{p+1}]^{r}\times\dots\times[X_{p+d}]^{r}.
\]
We denote the produced partition relation by $\mathcal{R}_{\mathbf{a}}$. Let $\mathcal{R}$ be as follows:
\begin{center}
$\mathbf{x}_1\mathcal{R}\mathbf{x}_2$\,\,\,iff\,\,\, $\forall\,\mathbf{a}\in\displaystyle{\prod_{i=1}^{p}}[X_{i}]^{r}:\mathbf{x}_1\mathcal{R}_{\mathbf{a}}\mathbf{x}_2$
\end{center}
The number of partition classes is at most $2^{\chi}$. Hence by (\ref{partitionib}),there are subsets $Z_{i}\subset X_{p+i}$ for $1\leq i\leq d$ such that $|Z_{i}|=\mu_{p+i}$ and the set
\[
[Z_{1}]^{r}\times\dots\times[Z_{d}]^{r}
\]
lies in one partition class.

Now for $1\leq i\leq p$ put $Y_{i}^{*}=X_{i}$, for $1\leq i\leq d$ put $Y_{p+i}^{*}=Z_{i}\,$ and for $1\leq i<\theta$ set $Y_{e^{'}+i}=X_{e+i}$. Finally for $1\leq i\leq e^{'}$ let $Y_{i}$ be any member of $(Y_{i}^{*})^{\bullet\bullet}$ in the sense of Fact \ref{fact}. Now we can deduce that
\[
\forall\,\mathbf{a}\in\displaystyle{\prod_{i=1}^{p}}\big{[}Y_{i}^{*}\big{]}^{r}
\]
\underline{either}
\begin{equation}\label{yekib}
\forall\,\mathbf{x}\in\displaystyle{\prod_{i=1}^{d}}\big{[}Y_{p+i}^{*}\big{]}^{r}\,\,\tau^{\mathrm{left}}(\mathbf{a},\mathbf{x})>M_{*},
\end{equation}
\underline{or} there exists $\alpha\in M_{*}$ such that
\begin{equation}\label{doib}
\forall\,\mathbf{x}\in\displaystyle{\prod_{i=1}^{d}}\big{[}Y_{p+i}^{*}\big{]}^{r}\,\,\tau^{\mathrm{left}}(\mathbf{a},\mathbf{x})=\alpha.
\end{equation}
The next task is proving the required properties of $Y_i$. Of course for $1\leq i\leq p$:
\[
Y_{i}^{*}=X_{i}\in[F^{-1}(\gamma_{i})]^{\mu_{i}}=[F^{-1}(\beta_{i})]^{\mu_{i}},
\]
thus $Y_{i}\in[F^{-1}(\beta_{i})]^{\mu_{i}}$. Also for $1\leq i\leq d$ we have
\[
Y_{p+i}^{*}=Z_{i}\subset X_{p+i}\in[F^{-1}(\gamma_{p+i})]^{\kappa_{i}}
\]
and $|Z_{i}|=\mu_{p+i}$, hence $Y_{p+i}^{*}\in[F^{-1}(\beta_{p+i})]^{\mu_{p+i}}$ and
$Y_{p+i}\in[F^{-1}(\beta_{p+i})]^{\mu_{p+i}}$. For the rest of $Y_i$ we have:
\[
Y_{e^{'}+i}=X_{e+i}\subset F^{-1}(\gamma_{e+i})=F^{-1}(\beta_{e+i}),
\]
for $1\leq i <\theta$. Note also that
\[
\theta = \sup\bigr\{|X_{e+i}|;i<\theta\bigr\}=\sup\bigr\{|Y_{e^{'}+i}|;i<\theta\bigr\}.
\]
We establish the inclusion (\ref{hadaf1ib}). Let's first prove that
\begin{equation}\label{hadaf2ib}
\displaystyle{\prod_{i=1}^{e^{'}}}\big{[}Y_{i}\big{]}^{r}\times\big{[}F|(\bigcup_{1\leq i<\theta}Y_{e^{'}+i})\big{]}^{r,f^{'}}\subset S.
\end{equation}
Note that $e=e^{'},f=f^{'}$ and obviously by construction:
\[
\displaystyle{\prod_{i=1}^{e^{'}}}\big{[}Y_{i}\big{]}^{r}\subset\displaystyle{\prod_{i=1}^{e}}\big{[}X_{i}\big{]}^{r},\,\,\,
\big{[}F|(\bigcup_{1\leq i<\theta}Y_{e^{'}\!+i})\big{]}^{r,f^{'}}=\big{[}F|(\bigcup_{1\leq i<\theta}X_{e\!+i})\big{]}^{r,f}.
\]
So (\ref{hadaf2ib}) immediately follow from (\ref{tak}).
In order to prove (\ref{hadaf1ib}) it suffices to show:
\begin{equation}\label{birbirib}
\forall\,\mathbf{x}\in\displaystyle{\prod_{i=1}^{e^{'}}}\big{[}Y_{i}\big{]}^{r}\times\big{[}F|(\bigcup_{1\leq i<\theta}Y_{e^{'}+i})\big{]}^{r,f^{'}}\,\,\, M\models\sigma(\mathbf{x}).
\end{equation}
As in the previous subcase the maximum first index $i$ in the constants $c_{ij}$ occurring in $\sigma$ is $\iota(\sigma)=i_n=e^{'}$, thus it is enough to consider only that part of $\mathbf{x}$ which comes from $[Y_{1}]^{r}\times\dots\times[Y_{e^{'}}]^{r}$, namely
\begin{equation}\label{ikiikiib}
\forall\,\mathbf{x}\in\displaystyle{\prod_{i=1}^{e^{'}}}\big{[}Y_{i}\big{]}^{r}\,\,\,M\models\sigma(\mathbf{x}).
\end{equation}
The rest of the proof of goes the same way as the proof of Subcase (Ia) but with some minor changes. Let $\mathbf{h}$ be an element of $\langle\mathbf{h}_1,\dots,\mathbf{h}_{e^{'}}\rangle\in[Y_1]^{r}\times\dots\times[Y_{e^{'}}]^{r}$. Let $\mathbf{a}=\langle\mathbf{h}_1,\dots,\mathbf{h}_p\rangle$, $\mathbf{b}=\langle\mathbf{h}_{p+1},\dots,\mathbf{h}_{e^{'}}\rangle$. Also for $1\leq i<e^{'}$, set $\mathbf{h}_i=\langle h_{i1},\dots,h_{ir}\rangle$. If $\tau(h_{i_1j_1},\dots,h_{i_nj_n})\leq h_{uv}$, then obviously $M\models\sigma(\mathbf{h})$. So suppose
$\tau(h_{i_1j_1},\dots,h_{i_nj_n})>h_{uv}$.
Then (\ref{ikiikiib}) is reduced to
\begin{equation}\label{uchuchib}
\tau^{\mathrm{left}}(\mathbf{h})=\tau^{\mathrm{right}}(\mathbf{h}).
\end{equation}
Recall that $u<i_{m+1}$, so $u\leq i_{m+1}-1=p$. It follows that $Y_{u}=X_{u}\subset F^{-1}(\gamma_{u})\subset M_{\gamma_p}=M_*$ and consequently $h_{uv}\in Y_u$ is a member of $M_*$. Since we have assumed that $\tau^{\mathrm{left}}(\mathbf{h})<h_{uv}$, it follows that $\tau^{\mathrm{left}}(\mathbf{h})\in M_*$. This will eliminate the possibility (\ref{yekib}). Hence (\ref{doib}) occurs. Thus there is an $\alpha\in M_{\star}$ such that
\begin{equation}\label{taazehib}
\forall\,\mathbf{y}\in\displaystyle{\prod_{i=1}^{d}}\big{[}Y_{p+i}^{*}\big{]}^{r}\,\,\tau^{\mathrm{left}}(\mathbf{a},\mathbf{y})=\alpha.
\end{equation}
Now suppose $\sigma_1,\sigma_2$ are the following two sentences:
\[
\sigma_1:\,\,\,\tau(\underline{h},c_{i_{m+1}j_{m+1}},\dots,c_{i_nj_n})=\alpha,
\]
\[
\sigma_2:\,\,\,\tau(\underline{h},c_{i_{m+1}l_{m+1}},\dots,c_{i_nl_n})=\alpha,
\]
where $\underline{h}=\langle h_{i_1j_1},\dots,h_{i_mj_m}\rangle$. From (\ref{taazehib}), it follows that
\begin{equation}\label{yekyekib}
\forall\,\mathbf{y}\in\displaystyle{\prod_{i=p+1}^{e^{'}}}\big{[}Y_i^*\big{]}^r\,\,\,M\models\sigma_1(\mathbf{a},\mathbf{y}).
\end{equation}
But the two sequences $\langle c_{i_{m+1}j_{m+1}},\dots,c_{i_nj_n}\rangle$, $\langle c_{i_{m+1}l_{m+1}},\dots,c_{i_nl_n}\rangle$
are equivalent and hence Lemma \ref{combinatoriallemma} would imply
\begin{equation}\label{dodoib}
\forall\,\mathbf{y}\in\displaystyle{\prod_{i=p+1}^{e^{'}}}\big{[}Y_i\big{]}^r\,\,\,M\models\sigma_2(\mathbf{a},\mathbf{y}).
\end{equation}
Putting (\ref{yekyekib}) and (\ref{dodoib}) together we obtain
\[
\forall\,\mathbf{y}\in\displaystyle{\prod_{i=p+1}^{e^{'}}}\big{[}Y_i\big{]}^r\,\,\,
\tau^{\mathrm{left}}(\mathbf{a},\mathbf{y})=\tau^{\mathrm{right}}(\mathbf{a},\mathbf{y}),
\]
which implies that $\tau^{\mathrm{left}}(\mathbf{a},\mathbf{b})=\tau^{\mathrm{right}}(\mathbf{a},\mathbf{b})$ and consequently $\tau^{\mathrm{left}}(\mathbf{h})=\tau^{\mathrm{right}}(\mathbf{h})$. This confirms (\ref{uchuchib}), hence the proof of Subcase (Ib).\\

\underline{\underline{Subcase (Ic)}}: ($p<e<e^{'}$)\\

Let $p+d=e, e+d^{'}=e^{'}$. For $1\leq i\leq d+d^{'}$, define cardinals $\kappa_{i},\lambda_{i}$ as follows: If $i=1$, then $\lambda_{1}=\max(\mu_{p+1},2^{\chi})$, $\beth_{r-1}(\lambda_{1})<\kappa_{1}<\theta$ and if $i>1$, then $\lambda_{i}=\max(\mu_{p+i},\kappa_{i-1})$, $\beth_{r-1}(\lambda_{i})<\kappa_{i}<\theta$. Having in mind the strategy of the player I in the game $G(S^{'},e^{'}):$
\[
\mu_1,\dots,\mu_p,\mu_{p+1},\dots,\mu_e,\mu_{e+1},\mu_{e^{'}}.
\]
Suppose that the player I plays the following strategy in the game $G(S,e)$:
\[
\mu_1,\dots,\mu_p,\kappa_{1},\dots,\kappa_{d}.
\]
Then the player II would play the game if he plays according to his winning strategy in the game $G(S,e)$. Suppose the move are
\[
\gamma_1,\dots,\gamma_p,\gamma_{p+1},\dots,\gamma_{p+d},\gamma_{e+1},\gamma_{e+i},\dots\,\,\, i<\theta
\]
Thus the above sequence is strictly increasing and there are sets
\begin{center}
$X_{1}\in\big{[}F^{-1}(\gamma_{1})\big{]}^{\mu_{1}},\dots,X_{p}\in\big{[}F^{-1}(\gamma_{p})\big{]}^{\mu_{p}},$
\end{center}
\begin{center}
$X_{p+1}\in\big{[}F^{-1}(\gamma_{p+1})\big{]}^{\kappa_{1}},\dots,X_{p+d}\in\big{[}F^{-1}(\gamma_{p+d})\big{]}^{\kappa_{d}}$
\end{center}
as well as the sets
\begin{center}
$X_{e+i}\subset F^{-1}(\gamma_{e+i})$ \,\,\, for \,\,\, $1\leq i<\theta$
\end{center}
such that
\begin{equation}\label{1ic}
\sup\bigr\{|X_{e+i}|;i<\theta\bigr\}=\theta
\end{equation}
and
\begin{equation}\label{*ic}
\displaystyle{\prod_{i=1}^{e}}\big{[}X_{i}\big{]}^{r}\times\big{[}F|(\displaystyle{\bigcup_{1\leq i<\theta}}X_{e+i})\big{]}^{r,f}\subset S.
\end{equation}
Now we are ready to define $\beta_{i}$. Set
\begin{eqnarray*}
&&\beta_{i}(\mu_{1},\dots,\mu_{i}) = \gamma_{i}(\mu_{1},\dots,\mu_{i})\,\,\,\mathrm{for}\,\,\,1\leq i\leq p,\\
&&\beta_{p+i}(\mu_{1},\dots,\mu_{p+i}) = \gamma_{p+i}(\mu_{1},\dots,\mu_{p},\kappa_1,\dots,\kappa_i)\,\,\,\mathrm{for}\,\,\,1\leq i\leq d.
\end{eqnarray*}
In order to define
\[
\beta_{e+1},\dots,\beta_{e+d^{'}},\beta_{e^{'}+1},\dots,\beta_{e^{'}+i},\dots\,\,\, i<\theta
\]
we need to introduce ordinals $\delta_1,\dots,\delta_{d^{'}}<\theta$ such that $\delta_1$ is the least ordinal $<\theta$ such that $|X_{e+\delta_{1}}|<\kappa_{e+1}$ and if $d^{'}\geq 2$, then for $2\leq i\leq d^{'}$ let $\delta_i$ be the least ordinal $<\theta$ such that $\delta_i>\delta_{i-1}$ and $|X_{e+\delta_{i}}|\geq\kappa_{e+i}$. This is possible because of (\ref{1ic}). Now set
\begin{eqnarray*}
&&\beta_{e+i}(\mu_{1},\dots,\mu_{e+i}) = \gamma_{e+\delta_i}(\mu_{1},\dots,\mu_{p},\kappa_1,\dots,\kappa_d)\,\,\,\mathrm{for}\,\,\,1\leq i\leq d^{'},\\
&&\beta_{e^{'}+i}(\mu_{1},\dots,\mu_{e^{'}}) = \gamma_{e+\delta_{d^{'}}+i}(\mu_{1},\dots,\mu_{p},\kappa_1,\dots,\kappa_d)\,\,\,\mathrm{for}\,\,\,1\leq i<\theta.
\end{eqnarray*}
this completes the description of the strategy of the player II for the game $G(S^{'},e^{'})$. We shall prove that it is a winning strategy. By our choice of $\beta_i$ it is evident that
\[
\beta_1<\beta_2<\dots<\beta_{e^{'}}<\beta_{e^{'}+1}<\dots<\beta_{e^{'}+i}<\dots\,\,\,\,\,i<\theta.
\]
We must find $Y_i$'s such that
\begin{equation}\label{3ic}
Y_{1}\in\big{[}F^{-1}(\beta_{1})\big{]}^{\mu_{1}},\dots,Y_{e^{'}}\in\big{[}F^{-1}(\beta_{e^{'}})\big{]}^{\mu_{e^{'}}}
\end{equation}
as well as
\begin{equation}\label{4ic}
Y_{e^{'}+i}\subset F^{-1}(\beta_{e^{'}+i})
\end{equation}
for $1\leq i<\theta$ where
\begin{equation}\label{5ic}
\sup\bigr\{|Y_{e^{'}+i}|;1\leq i<\theta\bigr\}=\theta
\end{equation}
and
\begin{equation}\label{6ic}
\big{[}Y_{1}\big{]}^{r}\times\dots\times\big{[}Y_{e^{'}}\big{]}^{r}\times\big{[}F|(\displaystyle{\bigcup_{1\leq i<\theta}}Y_{e^{'}+i})\big{]}^{r,f^{'}}\subset S^{'},
\end{equation}
where $s-e^{'}=f^{'}\geq 0$. As in the previous subcases it is time to enter the Erd\"{o}s and Rado's polarized partition relation into the scene. By Corollary \ref{usefulcorollary} we have
\begin{equation}\label{7ic}
(\kappa_1,\dots,\kappa_d,\dots,\kappa_{d+d^{'}})\longrightarrow(\mu_{p+1},\dots,\mu_e,\dots,\mu_{e^{'}})^{r}_{2^{\chi}}.
\end{equation}
We shall introduce a partition relation $\mathcal{R}$ on the set
\[
[X_{p+1}]^{r}\times\dots\times[X_{p+d}]^{r}\times[X_{e+\delta_1}]^{r}\times\dots\times[X_{e+\delta_{d^{'}}}]^{r}
\]
as follows: For any $\alpha\in M_{*}\cup\{\star\}$ and any $\mathbf{a}\in[X_{1}]^{r}\times\dots\times[X_{p}]^{r}$, let
\[
P_{\alpha,\mathbf{a}}=\bigr\{\mathbf{x}\in\displaystyle{\prod_{i=1}^{d}}[X_{p+i}]^{r}\times\displaystyle{\prod_{i=1}^{d^{'}}}[X_{e+\delta_{i}}]^{r}; \tau^{\mathrm{left}}(\mathbf{a},\mathbf{x})=\alpha\bigr\}
\]
where $\tau^{\mathrm{left}}(\mathbf{a},\mathbf{x})=\star$ is an abbreviation for $\tau^{\mathrm{left}}(\mathbf{a},\mathbf{x})>M_{*}$. for any $\mathbf{a}$ as above, the set $\bigr\{P_{\alpha,\mathbf{a}};\alpha\in M_{*}\cup\{\star\}\bigr\}$ forms a partition for the set
\[
\displaystyle{\prod_{i=1}^{d}}[X_{p+i}]^{r}\times\displaystyle{\prod_{i=1}^{d^{'}}}[X_{e+\delta_{i}}]^{r}
\]
which we denote by $\mathcal{R}_{\mathbf{a}}$. Let $\mathcal{R}$ be a partition relation such that
\begin{center}
$\forall\,\mathbf{x}_1,\mathbf{x}_2\in\displaystyle{\prod_{i=1}^{d}}[X_{p+i}]^{r}\times\displaystyle{\prod_{i=1}^{d^{'}}}[X_{e+\delta_{i}}]^{r}:
\mathbf{x}_1\mathcal{R}\mathbf{x}_2$  iff  $\forall\,{\mathbf{a}}\in\displaystyle{\prod_{i=1}^{p}}[X_{i}]^{r}\mathbf{x}_1\mathcal{R}_{\mathbf{a}}\mathbf{x}_2$.
\end{center}
The number of partition classes is at most $2^{\chi}$. Hence by (\ref{7ic}) there are subsets $Z_{i}\subset X_{p+i}$ for $1\leq i\leq d$ such that $|Z_i|=\mu_{p+i}$ and also subset $Z_{d+i}\subset X_{e+\delta_{i}}$ for $1\leq i\leq d^{'}$ such that $|Z_{d+i}|=\mu_{e+i}$ and the set
\[
[Z_1]^{r}\times\dots\times[Z_d]^{r}\times\dots\times[Z_{d+d^{'}}]^{r}
\]
lies in one partition class. Now for $1\leq i\leq p$ put $Y_i^{*}=X_i$ and for $1\leq i\leq d+d^{'}$ put $Y_{p+i}^{*}=Z_i$. Also let $Y_{e^{'}+i}=X_{e+\delta_{d^{'}}+i}$ for $1\leq i <\theta$. Finally for $1\leq i<e^{'}$ let $Y_i$ be any member of $(Y_{i}^{*})^{\bullet\bullet}$ in the sense of Fact \ref{fact}. Now we can deduce that
\[
\forall\,\mathbf{a}\in\displaystyle{\prod_{i=1}^{p}}[Y_{i}^{*}]^{r}
\]
\underline{either}
\[
\forall\,\mathbf{x}\in\displaystyle{\prod_{i=1}^{d+d^{'}}}[Y_{p+i}^{*}]^{r}\,\,\tau^{\mathrm{left}}(\mathbf{a},\mathbf{x})>M_{*},
\]
\underline{or} there exists $\alpha\in M_{*}$ such that
\begin{equation}\label{11ic}
\forall\,\mathbf{x}\in\displaystyle{\prod_{i=1}^{d+d^{'}}}[Y_{p+i}^{*}]^{r}\,\,\tau^{\mathrm{left}}(\mathbf{a},\mathbf{x})=\alpha.
\end{equation}
The next step is verifying that the required properties (\ref{3ic}), (\ref{4ic}), (\ref{5ic}) and (\ref{6ic}) of $Y_i$ hold. Of course for $1\leq i\leq p$ we have \[
Y_i^{*}=X_i\in[f^{-1}(\gamma_i)]^{\mu_i}=[f^{-1}(\beta_i)]^{\mu_i}.
\]
Thus $Y_i\in[f^{-1}(\beta_i)]^{\mu_i}$. Also for $1\leq i\leq d$ we have
\[
Y_{p+i}^{*}=Z_i\subset X_{p+i}\in[F^{-1}(\gamma_{p+i})]^{\kappa_{i}}=[F^{-1}(\beta_{p+i})]^{\kappa_{i}}
\]
and $|Z_i|=\mu_{p+i}$, hence $Y_{p+i}^{*}\in[F^{-1}(\beta_{p+i})]^{\mu_{p+i}}$, so $Y_{p+i}\in[F^{-1}(\beta_{p+i})]^{\mu_{p+i}}$.

\noindent For $1\leq i\leq d^{'}$ we have
\[
Y_{e+i}^{*}=Z_{d+i}\subset X_{e+\delta_{i}}\in F^{-1}(\gamma_{e+\delta_{i}})=F^{-1}(\beta_{e+i})
\]
with $|Z_{d+i}|=\mu_{e+i}$, so $Y_{e+i}^{*}\in[F^{-1}(\beta_{e+i})]^{\mu_{e+i}}$, hence $Y_{e+i}\in[F^{-1}(\beta_{e+i})]^{\mu_{e+i}}$.

\noindent Finally, for $1\leq i<\theta$:
\[
Y_{e^{'}+i}=X_{e+\delta_{d^{'}}+i}\subset F^{-1}(\gamma_{e+\delta_{d^{'}}+i})=F^{-1}(\beta_{e^{'}+i}).
\]
It is easy to see that $\sup\bigr\{|Y_{e^{'}+i}|;i<\theta\bigr\}=\sup\bigr\{|X_{e+\delta_{d^{'}}+i}|;i<\theta\bigr\}=\theta$. Now it remains to prove (\ref{6ic}). As in the previous cases we begin with stating that
\begin{equation}\label{10ic}
\displaystyle{\prod_{i=1}^{e^{'}}}\big{[}Y_i\big{]}^{r}\times \big{[}F|(\displaystyle{\bigcup_{1\leq i<\theta}}Y_{e^{'}+i})\big{]}^{r,f^{'}}\subset S.
\end{equation}
[Why?  obviously
\begin{equation}\label{7ic}
\displaystyle{\prod_{i=1}^{e^{'}}}\big{[}Y_i\big{]}^{r}=\displaystyle{\prod_{i=1}^{e}}\big{[}Y_i\big{]}^{r}\times
\displaystyle{\prod_{i=e+1}^{e^{'}}}\big{[}Y_i\big{]}^{r}\subset\displaystyle{\prod_{i=1}^{e}}\big{[}X_i\big{]}^{r}\times
\displaystyle{\prod_{i=1}^{d^{'}}}\big{[}X_{e+\delta_{i}}\big{]}^{r}
\end{equation}
and
\begin{equation}\label{8ic}
\big{[}F|(\displaystyle{\bigcup_{1\leq i<\theta}}Y_{e^{'}+i})\big{]}^{r,f^{'}}\subset\big{[}F|(\displaystyle{\bigcup_{1\leq i<\theta}}X_{e+\delta_d+i})\big{]}^{r,f^{'}}.
\end{equation}
Recall that $e+d^{'}=e^{'}$, so $f=f^{'}+d^{'}$. It is also clear that
\begin{equation}\label{9ic}
\displaystyle{\prod_{i=1}^{d^{'}}}\big{[}X_{e+\delta_{i}}\big{]}^{r}\times\big{[}F|(\displaystyle{\bigcup_{1\leq i<\theta}}X_{e+\delta_d+i})\big{]}^{r,f^{'}}\subset\big{[}F|(\displaystyle{\bigcup_{1\leq i<\theta}}X_{e+i})\big{]}^{r,f}.
\end{equation}
Therefore (\ref{7ic}), (\ref{8ic}) and (\ref{9ic}) imply that
\[
\displaystyle{\prod_{i=1}^{e^{'}}}\big{[}Y_{i}\big{]}^{r}\times\big{[}F|(\displaystyle{\bigcup_{1\leq i<\theta}}Y_{e^{'}+i})\big{]}^{r,f^{'}}\subset
\displaystyle{\prod_{i=1}^{e}}\big{[}X_{i}\big{]}^{r}\times\big{[}F|(\displaystyle{\bigcup_{1\leq i<\theta}}X_{e+i})\big{]}^{r,f}.
\]
So (\ref{10ic}) immediately follows from (\ref{*ic}).]

We shall complete the proof of (\ref{6ic}) by showing that
\[
\forall\,\mathbf{x}\in\displaystyle{\prod_{i=1}^{e^{'}}}\big{[}Y_{i}\big{]}^{r}\times
\big{[}F|(\displaystyle{\bigcup_{1\leq i<\theta}}Y_{e^{'}+i})\big{]}^{r,f^{'}} M\models\sigma(\mathbf{x}).
\]
Since $\iota(\sigma)=i_n=e^{'}$ it is sufficient to establish
\[
\forall\,\mathbf{x}\in\displaystyle{\prod_{i=1}^{e^{'}}}\big{[}Y_{i}\big{]}^{r} M\models\sigma(\mathbf{x}).
\]
Let $\mathbf{h}=\langle\mathbf{h}_1,\dots,\mathbf{h}_{e^{'}}\rangle\in[Y_1]^{r}\times\dots\times[Y_{e^{'}}]^{r}$, $\mathbf{a}=\langle\mathbf{h}_1,\dots,\mathbf{h}_{p}\rangle$, $\mathbf{b}=\langle\mathbf{h}_{p+1},\dots,\mathbf{h}_{e^{'}}\rangle$. So $\mathbf{h}=\langle\mathbf{a},\mathbf{b}\rangle$. We intend to show $M\models\sigma(\mathbf{h})$. For $1\leq i\leq e^{'}$, put $\mathbf{h}_i=\langle h_{i1},\dots, h_{ir}\rangle$. If $\tau(h_{i_1j_1},\dots,h_{i_nj_n})\leq h_{uv}$, then automatically $M\models\sigma(\mathbf{h})$. So suppose $\tau(h_{i_1j_1},\dots,h_{i_nj_n})> h_{uv}$. In this case $M\models\sigma(\mathbf{h})$ is equivalent to
\[
M\models\tau^{\mathrm{left}}(\mathbf{a},\mathbf{b})=\tau^{\mathrm{right}}(\mathbf{a},\mathbf{b}).
\]
But $h_{uv}\in M_{*}$ and then $\tau^{\mathrm{left}}(\mathbf{h})\in M_{*}$, so by (\ref{11ic}) we have
\begin{equation}\label{12ic}
\forall\,\mathbf{y}\in\displaystyle{\prod_{i=1}^{d+d^{'}}}\big{[}Y^{*}_{p+i}\big{]}^{r}\tau^{\mathrm{left}}(\mathbf{a},\mathbf{y})=\alpha.
\end{equation}
If $\sigma_1,\sigma_2$ are the following two sentences
\[
\sigma_1:\tau(\underline{h},c_{i_{m+1}j_{m+1}},\dots,c_{i_{n}j_{n}})=\alpha,
\]
\[
\sigma_2:\tau(\underline{h},c_{i_{m+1}l_{m+1}},\dots,c_{i_{n}l_{n}})=\alpha
\]
where $\underline{h}=\langle h_{i_1j_1},\dots,h_{i_mj_m}\rangle$, then (\ref{12ic}) implies that
\begin{equation}\label{13ic}
\forall\,\mathbf{y}\in\displaystyle{\prod_{i=1}^{d+d^{'}}}\big{[}Y^{*}_{p+i}\big{]}^{r}M\models\sigma_1(\mathbf{a},\mathbf{y}).
\end{equation}
Also from the equivalence of $\langle c_{i_{m+1}j_{m+1}},\dots,c_{i_{n}j_{n}}\rangle$ and $\langle c_{i_{m+1}l_{m+1}},\dots,c_{i_{n}l_{n}}\rangle$, along with Lemma \ref{combinatoriallemma}, we conclude that
\begin{equation}\label{13ic}
\forall\,\mathbf{y}\in\displaystyle{\prod_{i=1}^{d+d^{'}}}\big{[}Y_{p+i}\big{]}^{r}M\models\sigma_2(\mathbf{a},\mathbf{y}).
\end{equation}
Now (\ref{13ic}), (\ref{13ic}) would reveal that
\[
\forall\,\mathbf{y}\in\displaystyle{\prod_{i=1}^{d+d^{'}}}\big{[}Y_{p+i}\big{]}^{r}
M\models\tau^{\mathrm{left}}(\mathbf{a},\mathbf{y})=\tau^{\mathrm{right}}(\mathbf{a},\mathbf{y}).
\]
which implies that $M\models\sigma(\mathbf{a},\mathbf{b})$, hence the proof of Subcase (Ic).\\

\underline{Case II}: ($e<p$)\\

Let $e+d=p^{'},p+d^{'}=e^{'}$, where $d,d^{'}>0$. Recall the strategy of the player I:
\[
\mu_1,\mu_2,\dots,\mu_{e^{'}}
\]
for the game $G(S^{'},e^{'})$ and also recall the winning strategy of the strategy of the player II for the game $G(S,e)$:
\[
\gamma_1,\dots,\gamma_e,\gamma_{e+1},\dots,\gamma_{e+i},\dots\,\,\,i<\theta
\]
So if we assume
\[
\gamma_{i}=\gamma_{i}(\mu_1,\dots,\mu_i)\,\,\,\mathrm{for}\,\,\, 1\leq i\leq e,
\]
\[
\gamma_{e+i}=\gamma_{e+i}(\mu_1,\dots,\mu_e)\,\,\,\mathrm{for}\,\,\, 1\leq i<\theta,
\]
then there are sets:
\[
X_1\in[F^{-1}(\gamma_1)]^{\mu_1},\dots,X_e\in[F^{-1}(\gamma_e)]^{\mu_e},
\]
\[
X_{e+i}\subset F^{-1}(\gamma_{e+i})
\]
with
\[
\sup\bigr\{|X_{e+i}|;i<\theta\bigr\}=\theta
\]
such that
\[
\displaystyle{\prod_{i=1}^{e}}\big{[}X_i\big{]}^{r}\times \big{[}F|(\displaystyle{\bigcup_{1\leq i<\theta}}X_{e+i})\big{]}^{r,f}\subset S
\]
where $s-e=f$. Now set
\[
\beta_{i}(\mu_1,\dots,\mu_i)=\gamma_{i}(\mu_1,\dots,\mu_i)\,\,\,\mathrm{for}\,\,\, 1\leq i\leq e.
\]
For $1\leq i\leq d$, let $\delta_{i}$ be the least ordinal $<\theta$ such that there is $X_{e+\delta_{i}}\subset F^{-1}(\gamma_{e+\delta_i})$ with $|X_{e+\delta_{i}}|\geq\mu_{e+i}$. We additionally may suppose that $\delta_1<\delta_2<\dots<\delta_d$. Also set
\[
\beta_{e+i}(\mu_1,\dots,\mu_e,\dots,\mu_{e+i})=\gamma_{e+\delta_i}(\mu_1,\dots,\mu_e)\,\,\,\mathrm{for}\,\,\,1\leq i\leq d.
\]
We need to set up the situation before defining the rest of $\beta_i$. This will be done by employing the Erd\"{o}s-Rado polarized partition theorem. Assume that $M_{*}=M_{e+\delta_{d}}$ and $\star$ is a symbol different from all elements of $M$. let $\chi$ denotes the cardinality of $M_{*}$. Now for $1\leq i\leq d^{'}$ define the cardinals $\kappa_i,\lambda_i$ as follows: If $i=1$, then $\lambda_1=\max(\mu_{p+1},2^{\chi})$, $\beth_{r-1}(\lambda_1)<\kappa_1<\theta$. If $i>1$, then $\lambda_i=\max(\mu_{p+i},\kappa_{i-1})$, $\beth_{r-1}(\lambda_i)<\kappa_i<\theta$. By Corollary \ref{usefulcorollary} we have
\begin{equation}\label{1caseii}
(\kappa_1,\dots,\kappa_{d^{'}})\longrightarrow(\mu_{p+1},\dots,\mu_{e^{'}})_{2^{\chi}}^{r}
\end{equation}
Now for $1\leq i\leq d^{'}$, let $\delta_{d+i}$ be the least ordinal $<\theta$ such that $\delta_{d+i}>\delta_{d+i-1}$ and there is $X_{e+\delta_{d+i}}\subset F^{-1}(\gamma_{e+\delta_{d+i}})$ with $|X_{e+\delta_{d+i}}|\geq\kappa_i$. Set
\[
\beta_{p+i}(\mu_1,\dots,\mu_{p+i})=\gamma_{e+\delta_{d+i}}(\mu_1,\dots,\mu_e)\,\,\,\,\mathrm{for}\,\,\,1\leq i\leq d^{'}.
\]
Also set
\[
\beta_{e^{'}+i}(\mu_1,\dots,\mu_{p+i})=\gamma_{e+\delta_{d+d^{'}}+i}(\mu_1,\dots,\mu_e)\,\,\,\,\mathrm{for}\,\,\,1\leq i<\theta.
\]
We claim that the strategy $\beta_i$ defined above constitutes a winning strategy for the player II in the game $G(S^{'},e^{'})$. Clearly it gives a strictly increasing sequence of moves for the player II. We shall prove that there are sets
\begin{equation}\label{2caseii}
Y_1\in[F^{-1}(\beta_1)]^{\mu_1},\dots,Y_{e^{'}}\in[F^{-1}(\beta_{e^{'}})]^{\mu_{e^{'}}}
\end{equation}
\begin{equation}\label{3caseii}
Y_{e^{'}+i}\subset F^{-1}(\beta_{e^{'}+i})\,\,\,\mathrm{for}\,\,\, 1\leq i<\theta
\end{equation}
such that
\begin{equation}\label{4caseii}
\sup\bigr\{|Y_{e^{'}+i}|;i<\theta\bigr\}=\theta
\end{equation}
and
\begin{equation}\label{5caseii}
\displaystyle{\prod_{i=1}^{e^{'}}}\big{[}Y_i\big{]}^{r}\times \big{[}F|(\displaystyle{\bigcup_{1\leq i<\theta}}Y_{e^{'}+i})\big{]}^{r,f^{'}}\subset S^{'}
\end{equation}
where $s-e^{'}=f^{'}$. For any $\alpha\in M_{*}\cup\{\star\}$ and any
\[
\mathbf{a}\in[X_1]^{r}\times\dots\times[X_e]^{r}\times[X_{e+\delta_{1}}]^{r}\times\dots\times[X_{e+\delta_{d}}]^{r}
\]
let
\[
P_{\alpha,\mathbf{a}}=\bigr\{\mathbf{x}\in\displaystyle{\prod_{i=1}^{d^{'}}}[X_{e+\delta_{d+i}}]^{r};\tau^{\mathrm{left}}(\mathbf{a},\mathbf{x})=\alpha\bigr\}.
\]
As usual $\tau^{\mathrm{left}}(\mathbf{a},\mathbf{x})=\star$ is an abbreviation for $\tau^{\mathrm{left}}(\mathbf{a},\mathbf{x})>M_{\star}$. Fixing any $\mathbf{a}$ as above, the set $\bigr\{P_{\alpha,\mathbf{a}}|\alpha\in M_{*}\cup\{\star\}\bigr\}$ becomes a partition for the set
\[
[X_{e+\delta_{d+1}}]^{r}\times\dots\times[X_{e+\delta_{d+d^{'}}}]^{r}.
\]
We denote the partition relation by $\mathcal{R}_{\mathbf{a}}$. Then the desired $\mathcal{R}$ would be defined as
\begin{center}
$\forall\,\mathbf{x}_1,\mathbf{x}_2\in\displaystyle{\prod_{i=1}^{d^{'}}}\big{[}X_{e+\delta_{d+i}}\big{]}^{r}:\mathbf{x}_1\mathcal{R}\mathbf{x}_2$\,\,\,iff\,\,\,
$\forall\,\mathbf{a}\in\displaystyle{\prod_{i=1}^{e}}\big{[}X_{i}\big{]}^{r}\times\displaystyle{\prod_{i=1}^{d}}\big{[}X_{e+\delta_{i}}\big{]}^{r}
\mathbf{x}_1\mathcal{R}_{\mathbf{a}}\mathbf{x}_2$.
\end{center}
The number of the partition classes is at most $2^{\chi}$. Hence by (\ref{1caseii}), there are subsets $Z_i\subset X_{e+\delta_{d+i}}$ for $1\leq i\leq d^{'}$ such that $|Z_{i}|=\mu_{p+i}$ and the following set lies in one partition class:
\[
[Z_1]^{r}\times\dots\times[Z_{d^{'}}]^{r}.
\]
Now set
\[
Y_i^{*}= X_i\,\,\,\mathrm{for}\,\,\, 1\leq i\leq p,
\]
\[
Y_{p+i}^{*}= Z_{i}\,\,\,\mathrm{for}\,\,\, 1\leq i\leq d^{'},
\]
\[
Y_{e^{'}+i}= X_{e+\delta_{d+d^{'}}+i}\,\,\,\mathrm{for}\,\,\, 1\leq i<\theta.
\]
Finally for $1\leq i\leq e^{'}$, let $Y_i$ be an arbitrary element of $(Y_i^{*})^{\bullet\bullet}$. Now for every $\mathbf{a}$ from $[Y_1^{*}]^{r}\times\dots\times[Y^{*}_p]^{r}$ we have \underline{either}
\[
\forall\,\mathbf{x}\in\displaystyle{\prod_{i=1}^{d+d^{'}}}\big{[}Y_{e+i}^{*}\big{]}^{r}\,\,\, \tau^{\mathrm{left}}(\mathbf{a},\mathbf{x})>M_{*},
\]
\underline{or} there exists $\alpha\in M_{*}$ such that
\begin{equation}\label{o-o}
\forall\,\mathbf{x}\in\displaystyle{\prod_{i=1}^{d+d^{'}}}\big{[}Y_{e+i}^{*}\big{]}^{r}\,\,\, \tau^{\mathrm{left}}(\mathbf{a},\mathbf{x})=\alpha.
\end{equation}
We show that $Y_i$ satisfy the relations (\ref{2caseii}) through (\ref{5caseii}). If $1\leq i\leq p$, then
\[
Y_i^{*}=X_i\in[F^{-1}(\gamma_i)]^{\mu_i}=[F^{-1}(\beta_i)]^{\mu_i},
\]
so $Y_i\in[F^{-1}(\beta_i)]^{\mu_i}$. For $1\leq i\leq d$:
\[
Y_{e+i}^{*}=X_{e+\delta_{i}}\in[F^{-1}(\gamma_{e+\delta_{i}})]^{\mu_{e+i}}=[F^{-1}(\beta_{e+i})]^{\mu_{e+i}}.
\]
Thus $Y_{e+i}\in[F^{-1}(\beta_{e+i})]^{\mu_{e+i}}$. If $1\leq i\leq d^{'}$, then
\[
Y_{p+i}^{*}=Z_i\subset X_{e+\delta_{d+i}}\in[F^{-1}(\gamma_{e+\delta_{d+i}})]^{\kappa_{i}}=[F^{-1}(\beta_{p+i})]^{\kappa_{i}},
\]
but $|Z_i|=\mu_{p+i}$ , hence $Y_{p+i}^{*}\in[F^{-1}(\beta_{p+i})]^{\mu_{p+i}}$ and immediately $Y_{p+i}\in[F^{-1}(\beta_{p+i})]^{\mu_{p+i}}$. This proves (\ref{2caseii}). Also for $1\leq i<\theta$:
\[
Y_{e^{'}+i}=X_{e+\delta_{d+d^{'}}+i}\subset F^{-1}(\gamma_{e+\delta_{d+d^{'}}+i})=F^{-1}(\beta_{e^{'}+i}),
\]
which proves (\ref{3caseii}). Obviously $\sup\bigr\{|Y_{e^{'}+i}|;i<\theta\bigr\}=\sup\bigr\{|X_{e+\delta_{d+d^{'}}+i}|;i<\theta\bigr\}=\theta$. So we have (\ref{4caseii}). It remains to prove (\ref{5caseii}). As in the previous cases we start with claiming that
\begin{equation}\label{golcaseii}
\displaystyle{\prod_{i=1}^{e^{'}}}\big{[}Y_i\big{]}^{r}\times \big{[}F|(\displaystyle{\bigcup_{1\leq i<\theta}}Y_{e^{'}+i})\big{]}^{r,f^{'}}\subset S.
\end{equation}
[Why? Observe that the left side of the above relation can be written as
\[
\displaystyle{\prod_{i=1}^{e}}\big{[}Y_i\big{]}^{r}\times
\displaystyle{\prod_{i=1}^{d}}\big{[}Y_{e+i}\big{]}^{r}\times
\displaystyle{\prod_{i=1}^{d^{'}}}\big{[}Y_{p+i}\big{]}^{r}\times
\big{[}F|(\displaystyle{\bigcup_{1\leq i<\theta}}Y_{e^{'}+i})\big{]}^{r,f^{'}}.
\]
By construction
\[
\displaystyle{\prod_{i=1}^{e}}\big{[}Y_i\big{]}^{r}=
\displaystyle{\prod_{i=1}^{e}}\big{[}X_i\big{]}^{r},\,\,\,
\displaystyle{\prod_{i=1}^{d}}\big{[}Y_{e+i}\big{]}^{r}\subset
\displaystyle{\prod_{i=1}^{d}}\big{[}X_{e+\delta_i}\big{]}^{r},\,\,\,
\displaystyle{\prod_{i=1}^{d^{'}}}\big{[}Y_{p+i}\big{]}^{r}\subset
\displaystyle{\prod_{i=1}^{d^{'}}}\big{[}X_{e+\delta_{d+i}}\big{]}^{r}
\]
as well as
\[
\big{[}F|(\displaystyle{\bigcup_{1\leq i<\theta}}Y_{e^{'}+i})\big{]}^{r,f^{'}}\subset
\big{[}F|(\displaystyle{\bigcup_{1\leq i<\theta}}X_{e+\delta_{d+d^{'}}+i})\big{]}^{r,f^{'}}.
\]
Since $f^{'}+d+d^{'}=f$, we can conclude that
\[
\displaystyle{\prod_{i=1}^{d}}\big{[}X_{e+\delta_i}\big{]}^{r}\times
\displaystyle{\prod_{i=1}^{d^{'}}}\big{[}X_{e+\delta_{d+i}}\big{]}^{r}\times
\big{[}F|(\displaystyle{\bigcup_{1\leq i<\theta}}X_{e+\delta_{d+d^{'}}+i})\big{]}^{r,f^{'}}\subset
\big{[}F|(\displaystyle{\bigcup_{1\leq i<\theta}}X_{e+i})\big{]}^{r,f}.
\]
Therefore
\[
\displaystyle{\prod_{i=1}^{e^{'}}}\big{[}Y_i\big{]}^{r}\times \big{[}F|(\displaystyle{\bigcup_{1\leq i<\theta}}Y_{e^{'}+i})\big{]}^{r,f^{'}}\subset
\displaystyle{\prod_{i=1}^{e}}\big{[}X_i\big{]}^{r}\times \big{[}F|(\displaystyle{\bigcup_{1\leq i<\theta}}X_{e+i})\big{]}^{r,f}\subset S,
\]
which proves (\ref{golcaseii}).]

For the last step of establishing Case II we must show that
\[
\forall\,\mathbf{x}\in\displaystyle{\prod_{i=1}^{e^{'}}}\big{[}Y_i\big{]}^{r}\times \big{[}F|(\displaystyle{\bigcup_{1\leq i<\theta}}Y_{e^{'}+i})\big{]}^{r,f^{'}}\,\,\,M\models\sigma(\mathbf{x}).
\]
Since $\iota(\sigma)=i_n=e^{'}$, it reduces to show
\[
\forall\,\mathbf{x}\in\displaystyle{\prod_{i=1}^{e^{'}}}\big{[}Y_i\big{]}^{r}\,\,\,M\models\sigma(\mathbf{x}).
\]
Choose an element $\mathbf{h}=\langle\mathbf{h}_1,\dots,\mathbf{h}_{e^{'}}\rangle\in[Y_1]^{r}\times\dots[Y_{e^{'}}]^r$ and let $\mathbf{a}=\langle\mathbf{h}_1,\dots,\mathbf{h}_{p}\rangle$ and $\mathbf{b}=\langle\mathbf{h}_{p+1},\dots,\mathbf{h}_{e^{'}}\rangle$. So $\mathbf{h}=\langle\mathbf{a},\mathbf{b}\rangle$. Let $\mathbf{h}_i=\langle h_{i1},\dots,h_{ir}\rangle$. If $\tau(h_{i_1j_1},\dots,h_{i_nj_n})\leq h_{uv}$, then we get $M\models\sigma(\mathbf{h})$. So suppose that $\tau(h_{i_1j_1},\dots,h_{i_nj_n})> h_{uv}$. The assertion $M\models\sigma(\mathbf{h})$ is equivalent to
\[
M\models\tau^{\mathrm{left}}(\mathbf{a},\mathbf{b})=\tau^{\mathrm{right}}(\mathbf{a},\mathbf{b}).
\]
Observe that $u\leq i_n-1=p$ and $h_{uv}\in Y_u$. But
\[
Y_u\subset\displaystyle{\bigcup_{i=1}^{e}}X_i\cup\displaystyle{\bigcup_{i=1}^{d}}X_{e+\delta_i}\subset
\displaystyle{\bigcup_{i=1}^{e}}F^{-1}(\gamma_i)\cup\displaystyle{\bigcup_{i=1}^{d}}F^{-1}(\gamma_{e+\delta_i})\subset
M_{e+\delta_d}=M_*.
\]
Hence $h_{uv}\in M_*$. So $\tau^{\mathrm{left}}(\mathbf{h})\in M_*$. Now by (\ref{o-o}) we have
\begin{equation}\label{12caseii}
\forall\,\mathbf{y}\in\displaystyle{\prod_{i=1}^{d^{'}}}\big{[}Y^{*}_{p+i}\big{]}^r\,\,\,\tau^{\mathrm{left}}(\mathbf{a},\mathbf{y})=\alpha.
\end{equation}
Set
\[
\sigma_1:\tau(\underline{h},c_{i_{m+1}j_{m+1}},\dots,c_{i_nj_n})=\alpha,
\]
\[
\sigma_2:\tau(\underline{h},c_{i_{m+1}l_{m+1}},\dots,c_{i_nl_n})=\alpha,
\]
with $\underline{h}=\langle h_{i_1j_1},\dots,h_{i_mj_m}\rangle$. The relation (\ref{12caseii}) says that
\begin{equation}\label{Acaseii}
\forall\,\mathbf{y}\in\displaystyle{\prod_{i=1}^{d^{'}}}\big{[}Y^{*}_{p+i}\big{]}^r\,\,\,M\models\sigma_1(\mathbf{a},\mathbf{y}).
\end{equation}
Now by the equivalence of $\langle c_{i_{m+1}j_{m+1}},\dots,c_{i_nj_n}\rangle$ and $\langle c_{i_{m+1}l_{m+1}},\dots,c_{i_nl_n}\rangle$ together with Lemma \ref{combinatoriallemma}, we conclude that
\begin{equation}\label{Bcaseii}
\forall\,\mathbf{y}\in\displaystyle{\prod_{i=1}^{d^{'}}}\big{[}Y_{p+i}\big{]}^r\,\,\,M\models\sigma_2(\mathbf{a},\mathbf{y}).
\end{equation}
We get the following relation as a result of (\ref{Acaseii}) and (\ref{Bcaseii}):
\[
\forall\,\mathbf{y}\in\displaystyle{\prod_{i=1}^{d^{'}}}\big{[}Y_{p+i}\big{]}^r\,\,\,M\models\tau^{\mathrm{left}}(\mathbf{a},\mathbf{y})=
\tau^{\mathrm{right}}(\mathbf{a},\mathbf{y}).
\]
But
\[
\mathbf{b}\in\displaystyle{\prod_{i=1}^{d^{'}}}\big{[}Y_{p+i}\big{]}^r,
\]
so  $M\models\tau^{\mathrm{left}}(\mathbf{a},\mathbf{b})=
\tau^{\mathrm{right}}(\mathbf{a},\mathbf{b})$. This equals to say that $M\models\sigma(\mathbf{a},\mathbf{b})$. This completes the proof of Case II. Now we are in the position to say that the proof of Proposition \ref{proposition3} is finished.
\end{proof}

Now we are ready to complete the proof of the main theorem of this paper:

\begin{proof}[Proof of Theorem \ref{maintheorem} (i)]
It is enough to show that $T+\Sigma^{*}+\Sigma_1^{*}$ is consistent. Let $\Sigma^{'}$ be a finite part of $\Sigma^{*}+\Sigma_1^{*}$. Suppose $r,s$ are large enough positive integers so that for any $\sigma\in\Sigma^{'}$ and any $c_{ij}$ occurring in $\sigma$ we have $i\leq s$ and $j\leq r$. After the natural interpretation of all symbols of $\mathcal{L}^S$ in $M$, we have $M\models T_{skolem}$. We will show that there is $\mathbf{a}\in[F]^{r,s}$ such that for every $\sigma\in\Sigma^{'}$, $M\models\sigma(\mathbf{a})$. This would imply that $T+\Sigma^{*}+\Sigma_1^{*}$ is consistent. Note that $\Sigma^{*}($i$)=\Sigma_1^{*}($i$)$, $\Sigma^{*}($ii$)=\Sigma_1^{*}($ii$)$ and $\Sigma^{*}($iii$)=\Sigma_1^{*}($iii$)$ and we have shown in the proof of Theorem \ref{partial consistency} that if $\sigma$ is of types $\Sigma^{*}_1($i$),\Sigma^{*}_1($ii$)$ and $\Sigma^{*}_1($iii$)$, then for any $\mathbf{a}\in[F]^{r,s}$, $M\models\sigma(\mathbf{a})$. Now suppose that $B=\{\sigma_1,\dots,\sigma_p\}$ is the set of all sentences of $\Sigma^{'}$ of types $\Sigma^{*}($iv$),\Sigma_1^{*}($iv$)$. Set $\{\iota(\sigma_1),\dots,\iota(\sigma_p)\}=\{e_1,\dots,e_q\}$ such that $e_1<\dots<e_q$. Obviously $e_1>1$ and $e_q\leq s$ and also $[F]^{r,s}$ is 1-superlarge. By induction we shall show that there are sets $S_q\subset\dots\subset S_1\subset[F]^{r,s}$ such that for $1\leq k\leq q$, every $S_k$ is $e_k$-superlarge and if $\mathbf{a}\in S_k$, then $M\models\sigma(\mathbf{a})$, where $\sigma\in B$ and $\iota(\sigma)=e_k$. Put $S_0=[F]^{r,s},e_0=1$. Suppose we have constructed $S_{k-1}$ and we want to find $S_k$. Let
\[
B_k=\{\sigma\in B|\sigma\in\Sigma_1^{*}(\mathrm{iv}),\iota(\sigma)=e_k\},
\]
\[
B_k^*=\{\sigma\in B|\sigma\in\Sigma^{*}(\mathrm{iv}),\iota(\sigma)=e_k\}.
\]
If $B_k\neq\emptyset$, then by Proposition \ref{proposition2} there is an $e_k$-superlarge set $S_k^{(0)}\subset S_{k-1}$ such that
\[
\forall\,\sigma\in B_k\,\forall\,\mathbf{a}\in S_k^{(0)}\,\, M\models\sigma(\mathbf{a}).
\]
Note that if $B_k=\emptyset$, we do nothing and straightly turn to $B_k^*$. If  $B_k^*\neq\emptyset$ and $|B_k^*|=n_k$, then by a successive use of Proposition \ref{proposition3}, $n_k$ times, we get a finite nested sequence of $e_k$-superlarge sets:
\[
S_k^{(n_k)}\subset S_k^{(n_{k}-1)}\dots\subset S_k^{(1)}\subset S_k^{(0)}\subset S_{k-1}
\]
such that
\[
\forall\,\sigma\in B_k^*\,\forall\,\mathbf{a}\in S_k^{(n_k)}\,\, M\models\sigma(\mathbf{a}).
\]
Now we define $S_k$. If $B_k^*\neq\emptyset$, put $S_k=S_k^{(n_k)}$, otherwise put $S_k=S_k^{(0)}$. Therefore for all $\mathbf{a}\in S_k$ and all $\sigma\in B$ (and consequently all $\sigma\in\Sigma^{'}$) we have $M\models\sigma(\mathbf{a})$. This completes the proof.
\end{proof}

Now by a simple example in the below we show that there is a first-order theory $T$ such that $T$ has a strong singular-like model but none of its models (including its Keisler singular-like models) has an elementary end extension. So this justifies our assumption in Theorem \ref{maintheorem} that $T$ has a strongly inaccessible-like model.

Let $\kappa$ be a singular strong limit cardinal. Let $L=\{<,F,U\}$ be a first order language such that $F$ is a one-place function symbol and $U$ is a one-place relation symbol. Let $T$ be a first order theory in the language $L$ which says:

(i) $<$ is a linear order,

(ii) $U$ is a proper initial segment of the model,

(iii) $F$ maps $U$ cofinally into the model.

It is easy to see that the $\kappa$-like model $M=(\kappa,<)$ in which $<$ is interpreted as $\in$, can be expanded to a model $M=(\kappa,<,F,U)$ of $T$. Now if $M$ is a model of $T$ which has a proper elementary end extension $N$, then we must have $U^{M}=U^{N}$, but note that $F$ cannot cofinally map $U$ into both of $M$ and $N$. So $M$ cannot have a proper elementary end extension.

\bibliography{singular-like}
\bibliographystyle{plain}
\end{document}